\documentclass[12pt,a4paper]{article}
\usepackage{amsmath}
\usepackage{amsfonts}
\usepackage{amssymb}
\usepackage{amsthm}
\usepackage{mathtools}
\usepackage{mathabx}
\usepackage{mathrsfs}  %TO USE \mathscr{}
\usepackage{xcolor}
\usepackage{hyperref}
 \usepackage[bottom=4cm]{geometry}
\usepackage[latin1]{inputenc} % LaTeX, comprends les accents !
\usepackage[french,english]{babel}  
\usepackage{paralist}% Placez ici une liste de langues, la
\usepackage{natbib}
\usepackage{bbm}
\usepackage{pifont}

\definecolor{mygreen}{RGB}{28,172,0} 
\definecolor{mylilas}{RGB}{170,55,241}

\newtheorem{Assumption}{{\bf Assumption}}[section]
\newtheorem{lemma}{Lemma}[section]
\newtheorem{Remark}{{\bf Remark}}[section]
\newtheorem{definition}{{\bf Definition}}[section]
\newtheorem{theorem}{Theorem}[section]
\newtheorem{proposition}{Proposition}[section]
\newtheorem{corollary}{Corollary}[section]

\makeatletter
\renewenvironment{proof}[1][\proofname]{%
  \par\pushQED{\qed}\normalfont%
  \topsep6\p@\@plus6\p@\relax
  \trivlist\item[\hskip\labelsep\bfseries#1\@addpunct{.}]%
  \ignorespaces
}{%
  \popQED\endtrivlist\@endpefalse
}
\makeatother

\newcommand{\E}{\mathbb{E}}

\newcommand{\R}{\mathbb{R}}
\newcommand{\N}{\mathbb{N}}
\newcommand{\I}{\mathbb{I}}
\newcommand{\cf}{\mathcal{F}}
\newcommand{\ce}{\mathcal{E}}

\newcommand{\F}{\mathbb{F}}

\newcommand{\Tk}{\Theta_{\theta_k}}
\long\def\symbolfootnote[#1]#2{\begingroup\def\thefootnote{\fnsymbol{footnote}}
\footnote[#1]{#2}\endgroup}

\def\esssup{\text{ess sup}} 
\DeclareMathOperator*{\essinf}{ess\,inf}

\newcommand{\limn}{\lim_{n\rightarrow +\infty}} 
 
\newcommand{\limsupn}{\limsup_{n\rightarrow\infty}}

\author{ 
 Miryana Grigorova\footnote{Corresponding Author. Department of Statistics, University of Warwick, E-mail:miryana.grigorova@warwick.ac.uk} \\
 \and
 Ohood Aldalbahi \footnote{Department of Statistics, University of Warwick, E-mail:Ohood.Aldalbahi@warwick.ac.uk}
 \footnote{Department of Mathematics, College of Sciences, King Saud University, P.O. Box 2455, Riyadh 11451, Saudi Arabia. E-mail:oaldalbahi@ksu.edu.sa}
}

\begin{document}

\title{Non-linear optimal stopping with Bermudan strategies: the infinite horizon case}

\maketitle

\textit{Abstract:} \\
In this paper, we consider an optimal stopping problem with infinite horizon, non-negative pay-offs and non-linear evaluations $\rho_{S,\tau}$ indexed by two indices: $S$ and $\tau$, where $S$ is the time of evaluation and $\tau$ is the time when the pay-off is revealed. 
The agent's stopping strategies are constrained to be in the set 
of so-called Bermudan stopping times $\Theta$.
Under suitable assumptions on the non-linear evaluations $\rho$ and on the pay-off, we show that a dynamic programming principle holds in this framework.
We investigate the existence of $\varepsilon$-optimal stopping times, as well as the existence of optimal stopping times. 
We show that an $\varepsilon$-optimal stopping time exists. 
We also prove that the first time when the value family hits the pay-off is optimal if and only if it is finite. 
We also provide Doob's type convergence for non-negative \emph{$(\Theta, \rho)$}-supermartingales in the case where $\rho_{S,\tau}=\rho_S$ depends on the first index only. 
We provide an example from BSDEs with infinite horizon.

{\bf Keywords:} optimal stopping, Bermudan stopping strategy, non-linear operator, non-linear evaluation, $g$-expectation, dynamic risk measure,  dynamic programming principle, non-linear Snell envelope family,  Bermudan strategy, infinite horizon, $\varepsilon$-optimal stopping time.     %non-linear expectation  discrete time

\section{Introduction}

Non-linear optimal stopping problems have attracted considerable interest in the recent years, both from a theoretical point view and due to their applications: cf., e.g., \cite{Grigorova-2}, \cite{Schoe}, in the discrete time case, 
and \cite{Bayraktar-2}, \cite{Bayraktar}, 
 \cite{Bayraktar-3}, \cite{belomestny2016optimal}, \cite{belomestny2017addendum}, \cite{Ekren}, \cite{Grigorova-1}, \cite{Grigorova-3}, \cite{Grigorova-4}, \cite{Nutz}, \cite{Quenez-Sulem} in the continuous time case. 
Often, these problems are considered when the terminal horizon, say $T$, is finite. 
This is particularly relevant where financial applications to pricing and hedging of American options are investigated, as the horizon $T$ corresponds in these applications to the maturity of the option (cf., e.g., \cite{EQ}, \cite{Grigorova_Am}, \cite{Rutkowski}).
Recently, \cite{Grigorova-5} consider a non-linear optimal stopping problem with finite horizon $T$, nonlinear operators $\rho_{S,\tau}$, depending on both induces (the time of evaluation $S$, and the time when the pay-off revealed $\tau$), and where the agent's strategies are constrained to lie in the so-called set of Bermudan stopping times $\Theta$ (to be defined precisely hereafter).
This can be seen as a constrained problem where the agent's strategies are in-between the discrete and the continuous stopping strategies. 
This non-linear optimal stopping problem with finite horizon was later extended to a (possibly) non-zero sum Dynkin game problem with finite horizon in \cite{Grigorova-7}
It is well-known, at least in the classical linear case, where $\rho_{S,\tau}[\cdot]$ is the usual linear conditional expectation $\E[\cdot|\cf_S]$, in the discrete-time framework, that the problem with infinite horizon is more difficult to solve than the problem with finite horizon (cf. \cite{Neveu}).  

In the present paper, we consider the non-linear optimal stopping problem with nonlinear operators $\rho_{S,\tau}$ and Bermudan strategies, and with non-negative (possibly non-integrable) pay-offs, when the horizon $T$ is equal to $+\infty$. 
We show a $(\Theta, \rho)$-Snell envelope characterization of the value family in terms of the smallest non-negative $(\Theta, \rho)$-supermartingale dominating the pay-off family, as well as a dynamic programming principle in this framework.
Our main results establish that under suitable assumptions on the pay-off $\xi$ and on the nonlinear evaluations $\rho_{S,\tau}$, there exists an $\varepsilon$-optimal stopping time.
Moreover, the first time $\nu$ when the value family hits the pay-off family is optimal if and only if $\nu$  is finite (a.s.). 
The proof is subdivided into several technical lemmas. 
It is well-known that non-negative supermartingales in the classical sense (defined with respect to the usual linear conditional expectations) converge almost surely (Doob's convergence result).
It is thus "natural" to ask oneself whether our non-negative non-linear $(\Theta, \rho)$-supermartingale also has such a property. 
We prove a Doob's type convergence for non-negative \emph{$(\Theta, \rho)$}-supermartingales in the case where $\rho_{S,\tau}=\rho_S$ depends on the first index only.
The question whether the property holds true in the case where $\rho_{S,\tau}$ depends also on the second index $\tau$ is still open.

The remainder of the paper is organized as follows:
 In Section \ref{sect_optimal}, we present the framework, formulate the optimal stopping problem, and provide some results on the value family, including the $(\theta,\rho)$-Snell envelope characterization and the dynamic programming principle (in the case of infinite horizon). 
 In Section \ref{sec_opt}, we investigate the existence of $\varepsilon$-optimal stopping times and the existence of optimal stopping times.
Section \ref{sec_Doob} is dedicated to  Doob's type convergence result for non-negative \emph{$(\Theta, \rho)$}-supermartingales.
In Section \ref{Example_1}, we provide an example induced by BSDEs with infinite horizon. 
The Appendix \ref{Appendix} includes some useful technical remarks and some of the technical proofs.

\section{The problem}\label{sect_optimal}

\subsection{The framework}\label{defin_sect}

Let $T=+\infty$ be an \textbf{infinite} terminal horizon.\\
Let $(\Omega,  \cf, P)$ be a (complete) probability space equipped with a right-continuous complete filtration $\F = \{\mathcal{F}_t \colon t\geq 0\}$. 
We set $\cf_{\infty}:= \sigma(\cup_{t \geq 0} \cf_{t})$.
\\
In the sequel, equalities and inequalities between random variables are to be understood in the $P$-almost sure sense. Equalities between measurable sets are to be understood in the $P$-almost sure sense.\\
Let $\N$ be the set of natural numbers, including $0$.  Let $\N^*$ be the set of natural numbers, excluding $0$.
Let ($\theta_k)_{k\in\N}$ be a sequence of stopping times (which are a.s. finite) 
satisfying the following properties:
\begin{itemize}
\item[(a)] The sequence $(\theta_k)_{k\in\N}$  is non-decreasing, i.e. 
for all $k\in\N$, $\theta_k\leq \theta_{k+1}$, a.s. 
\item[(b)] $\lim_{k\to\infty}\uparrow \theta_k=+\infty$ a.s. 
\end{itemize}
Moreover, we set $\theta_0=0$. \\
We note that the family of $\sigma$-algebras 
$({\cal F}_{\theta_k})_{k\in\N}$ is non- decreasing (as the sequence $(\theta_{k})$ is non-decreasing).
We denote by ${\Theta}_{\infty}:={\Theta}$ the set of stopping times $\tau$ of the form 
\begin{equation}\label{form}
\tau= \sum_{k=0}^{+\infty} \theta_k {\bf 1}_{A_k},
\end{equation}
where $A_{k} \in \mathcal{F}_{\theta_{k}}$, for each $k\in\N$, and $(A_{k})_{k \in \N}$ form a partition of $\Omega$.\\
We note that if $\tau \in \Theta$, then  $\tau$ is finite a.s. Moreover, $\tau$ is bounded from below by $0$ but is not necessarily bounded from above.

The set ${\Theta}$ can also  be described as the set of stopping times $\tau$ such that for almost all $\omega \in \Omega$, $\tau(\omega) = \theta_{k}(\omega)$, for some $k = k(\omega) \in \mathbb{N}$.

\begin{Remark} \label{Remark_set_Theta} 
Note that the set $\Theta$ is closed under concatenation:
 for each $\tau$ $\in$ $\Theta$, $\tau'$ $\in$ $\Theta$ and each 
$A \in {\cal F}_{\tau\wedge \tau'}$, the stopping time $\tau {\bf 1}_{A} + \tau' {\bf 1}_{A^c}$ is in  $\Theta$.  The set $\Theta$ is also closed under pairwise minimization (that is, for each $\tau\in\Theta$ and $\tau'\in\Theta$, we have $\tau\wedge \tau'\in\Theta$) and under pairwise maximization (that is, for each $\tau\in\Theta$ and $\tau'\in\Theta$, we have $\tau\vee \tau'\in\Theta$). Moreover,
for each non-increasing sequence of stopping times 
$(\tau_n)_{n \in {\mathbb N}} \in \Theta^{\mathbb{N}}$, we have $\lim_{n \rightarrow + \infty} \tau_n$  $\in$ $\Theta$ (cf. Remark \ref{Remark_closed_under_limit} in the Appendix for the proof).
Furthermore, if $(\tau_n)$ is a non-decreasing sequence of stopping times such that $\tau_n \uparrow \tau$ a.s. and 
$\tau$ is a.s. finite, then $\tau \in \Theta$ (cf. Remark \ref{Remark_closed_under_limit} in the Appendix for the proof).
\end{Remark}

\begin{Remark}\label{Rk_canonical} We have the following \emph{canonical}  writing of the sets in \eqref{form}: 
\begin{align*}
A_0&=\{\tau=\theta_0\};\\
%A_1&=\{\tau=\theta_1,\theta_1<T\}\backslash A_0; \;\ldots%\;  \\
A_{n+1}&=\{\tau=\theta_{n+1}\}\backslash (A_n\cup...\cup A_0); \text { for all } n\in \N.
\end{align*}  
From this writing, we have: if $\omega\in A_{k+1}$, then $\omega\notin \{\tau=\theta_k\}.$ 
\end{Remark}
\noindent
For each $\tau \in \Theta$, we denote by $\Theta_{\tau, +\infty}:=\Theta_{\tau}$ the set of stopping times $\nu \in 
\Theta$ such that $\nu \geq \tau$ a.s.\, The set $\Theta_{\tau}$ satisfies the same properties as 
the set $\Theta$.
We will call the set $\Theta$ the set of \textbf{Bermudan stopping strategies} (in the case of infinite horizon), 
and the set $\Theta_\tau$ will be referred to as 
the set of Bermudan stopping strategies from time $\tau$ perspective (in the case of infinite horizon).

We say that a family $\xi = (\xi(\tau), \, \tau \in \Theta)$ is \textbf{non-negative}, if, for each $\tau \in \Theta$, $\xi(\tau) \geq 0$ a.s. 
Note that $\xi(\tau) \geq 0$ a.s. %the non negativity property of a random variable 
means for us that $\xi(\tau)$ takes its values in $[0,+\infty]$ a.s. (in other words, our random variables take their values in the extended non-negative real line $[0,+\infty]$). \\
We denote by $L^0_{+}$ the set of all non-negative random variables.
We denote by $L^0_{+}(\cf_{\tau})$ the set of all non-negative $\cf_{\tau}$-measurable random variables. 

\begin{definition}\label{def.admi}
We say that a non-negative family 
$\phi=(\phi(\tau), \, \tau \in \Theta)$  is \emph{admissible} if it satisfies the following conditions: 
\par
1. \quad for all
$\tau \in \Theta$, $\phi(\tau)$ is a non-negative random variable (valued in $[0,+\infty]$), which is  $\mathcal{F}_\tau$-measurable. \par
 2. \quad  for all
$\tau,\tau'\in \Theta$, $\phi(\tau)=\phi(\tau')$ a.s.  on
$\{\tau=\tau'\}$.

Moreover, for $p \in [1, +\infty]$ fixed, we say that a (non-negative) admissible family $\phi$ is $p$-integrable, if for all $\tau \in \Theta$, $\phi(\tau)$ is in $L^{p}$.
\end{definition} 
Let $\phi=(\phi(\tau), \, \tau \in \Theta)$ be an admissible family. For a stopping time $\tau$  of the form \eqref{form}, we have 
\begin{equation}\label{formula}
\phi(\tau)= \sum_{k=0}^{+\infty} \phi(\theta_k ){\bf 1}_{A_k} \quad {\rm a.s.}
\end{equation}
Given two admissible families $\phi=(\phi(\tau), \, \tau \in \Theta)$ and $\phi'=(\phi'(\tau), \, \tau \in \Theta)$, we 
say that $\phi$ is {\em equal to} $\phi'$ and write $\phi = \phi'$ if, for all $\tau \in \Theta$, 
$\phi(\tau) =\phi'(\tau)$ a.s. We 
say that $\phi$ {\em dominates} $\phi'$ and write $\phi \geq \phi'$
if, for all $\tau \in \Theta$, 
$\phi(\tau) \geq \phi'(\tau)$ a.s.\\ 
The following remark can be found in \cite{Grigorova-5} and remains true in our framework as well.
\begin{Remark}\label{Rmk_on_admissibility}
Let $\phi=(\phi(\tau), \, \tau \in \Theta)$ be an admissible family. Let $\tau\in\Theta$ and let $(\tau_n)\in\Theta^{\N}$  be such that for (almost) each $\omega\in\Omega,$ there exists $n_0=n_0(\omega)$ (depending on $\omega$) satisfying, for all $n\geq n_0(\omega)$, $\tau_n(\omega)=\tau(\omega). $ Then, for all $n\geq n_0(\omega)$, $\phi(\tau_n)(\omega)=\phi(\tau)(\omega)$.\\ We show this by the following reasoning:  for each fixed $n\in\N$, let $\mathcal C_n:=\{\tau_n=\tau\}.$ For each fixed $m\in\N$, let
$\mathcal A_m:=\cap_{n\geq m}\mathcal C_n=\cap_{n\geq m}\{\tau_n=\tau\}.$ Note that  the set  $\mathcal A_m$ might be empty. We have $\cup_{m\in\N}\mathcal A_m=\Omega.$ Moreover, by the admissibility of $\phi$, we have, for each fixed $n\in\N$, $\phi(\tau_n)=\phi(\tau)$, on $\mathcal C_n=\{\tau_n=\tau \}.$ Hence, for each fixed  $m\in\N$,
 \begin{equation}\label{Eq_small_property}
  \text{ for all }n\geq m, \phi(\tau_n)=\phi(\tau)  \text{  on } \mathcal A_m=\cap_{n\geq m}\mathcal C_n.
 \end{equation} 
  Let $\omega\in\Omega$. By assumption, there exists  $n_0=n_0(\omega)$ such  that $\omega\in\mathcal A_{n_0}.$ By property \eqref{Eq_small_property} (applied with $m=n_0$), for all $n\geq n_0$, $\phi(\tau_n)(\omega)=\phi(\tau)(\omega),$ which is the desired conclusion.  
\end{Remark}

\subsection{The optimisation problem} \label{optimisation_problem}
Let  $\xi = (\xi(\tau), \, \tau \in \Theta)$ be a \textbf{non-negative admissible family} modelling an agent's dynamic financial position.
\begin{Remark} 
For example, the family  $\xi$ can be defined 
via a given progressive process $(\xi_t)_{t\in[0,T]}$ as follows: 
for each $\tau$ $\in$ $\Theta$, we set  $\xi(\tau) := \xi_\tau$, which models the gain of the agent if they stop at $\tau$.
The family of random variables $\xi=(\xi(\tau), \, \tau \in \Theta)$ can be shown to be admissible. 
If for each $k \in \mathbb{N}$, $\xi_{\theta_{k}}$ is non-negative, then the family of random variables $\xi = (\xi(\tau), \, \tau \in \Theta)$ is non-negative.
If for each $k \in \mathbb{N}$, $\xi_{\theta_{k}} \in L^{p}$, then the admissible family $\xi$ is $p$-integrable. The financial interpretation of this example is as follows: 
the agent can choose their strategy only among the stopping times in $\Theta$,  that is, among the stopping times which, for almost each $\omega$, have values in the 
\textbf{infinite countable} grid $\{0, \theta_1(\omega), \ldots, \theta_{n}(\omega), \theta_{n+1}(\omega),\ldots\}$.  In this example, the financial position which is actually taken into account in the problem
corresponds to the values of the process $(\xi_t)$ only at times $0, \theta_1,..., \theta_{n}, \theta_{n+1},...$.
\end{Remark}
\noindent
The \textit{minimal risk at time} $0$ over all Bermudan stopping strategies  is defined by:
\begin{equation}\label{optimal_stopping_problem_0bis}
\tilde V(0):= \inf_{\tau\in \Theta} \tilde \rho_{0,\tau}(\xi(\tau))=  -V(0),
\end{equation}
where
\begin{equation}\label{optimal_stopping_problembis}
V(0):=\sup_{\tau\in\Theta} \rho_{0,\tau}[\xi(\tau)],
\end{equation}
and where $\rho_{0,\tau}[\cdot]=-\tilde\rho_{0,\tau}[\cdot].$\\
[0.2cm]
We can view these problems also as \textit{constrained} optimal stopping problems, where the agent's strategy is constrained to remain in the set $\Theta$. This terminology has been used, for instance, in \cite{menaldi2016Robin}.
\\
We introduce the following properties  on the non-linear operators $\rho_{S, \tau}[\cdot]$, which will appear in the sequel. \\
[0.2cm] 
For
$S\in\Theta$, $S'\in\Theta$, $\tau\in\Theta$, for $\eta$, $\eta_{1}$ and $\eta_2$ in $L^0_{+}(\cf_\tau)$, for $\xi=(\xi(\tau))$ a non-negative admissible family: %\in\mathcal{S}^2$:  %$(\tau_n)\in\stopo^N$, and $\eta_n\in L^2(\cf_{\tau_n}), $ 
\begin{compactenum}[(i)]
\item[(i)] $\rho_{S,\tau}: L^{0}_{+}(\cf_\tau)\longrightarrow L^{0}_{+}(\cf_S)$ 
\item[(ii)] \emph{(admissibility)} $\rho_{S,\tau}[\eta]=\rho_{S',\tau}[\eta]$ a.s.  on $\{S=S'\}$.  
\item[(iii)] \emph{(knowledge preservation)}
$\rho_{\tau,S}[\eta]=\eta, $
for all $\eta\in L^0_{+}(\cf_S)$, all $\tau\in\Theta_S.$
%for all $\eta\in L^2(\cf_S)$, all $\tau\in\Theta_S.$ 
\item[(iv)] \emph{(monotonicity)}  $\;\rho_{S,\tau}[\eta_1]\leq \rho_{S,\tau}[\eta_2]$ a.s., if  $\eta_1\leq \eta_2$ a.s. 
\item[(v)] \emph{(consistency)}  $\;\rho_{S,\theta}[\rho_{\theta,\tau}[\eta]]= \rho_{S,\tau}[\eta]$, for all $S, \theta, \tau$ in $\Theta$ such that $S\leq \theta\leq \tau$ a.s. %(\textit{"stronger":  $\rho_{S,\theta}[\rho_{\theta,\tau}(\eta)]\I_{\{S\leq \theta\leq \tau\}}= \rho_{S,\tau}[\eta]\I_{\{S\leq \theta\leq \tau\}}$; if not the property "stronger", we assume that $\rho(0)=0$)).}  
\item [(vi)] \emph{("generalized" zero-one law) }$\; I_A\rho_{S,\tau}[\xi(\tau)] =I_A\rho_{S,\tau'}[\xi(\tau')],$ for all $A\in\cf_S$, $\tau\in \Theta_{S}$, $\tau'\in\Theta_{S}$ such that $\tau=\tau'$ on $A$.

\item[(vii)] \emph{(monotone Fatou property with respect to the terminal condition)}\\
 $\rho_{S, \tau}[\eta] \leq \liminf_{n \to +\infty} \rho_{S, \tau}[\eta_{n}]$, for $(\eta_{n}), \eta$ such that $(\eta_{n})$ is non-decreasing, $\eta_{n} \in L^{0}_{+}(\cf_{\tau})$, and $\lim_{n \to +\infty} \uparrow \eta_{n} = \eta$ a.s.\\
\end{compactenum}
\noindent
Fatou property is often assumed in the literature on risk measures (particularly in the case where the random variables are in $L^{\infty}$).

\begin{Remark}\label{Remark_CB}
Note also that if $\rho$ satisfies \textbf{monotonocity} (iv) and \textbf{monotone Fatou} property with respect to  the terminal condition (vii), then $\rho_{S, \tau}[\eta]= \lim_{n \to +\infty} \rho_{S, \tau}[\eta_{n}]$, for $(\eta_{n}), \eta$ such that $(\eta_{n})$ is non-decreasing, $\eta_{n} \in L^{0}_{+}(\cf_{\tau})$, and $\lim_{n \to +\infty} \uparrow \eta_{n} = \eta$ a.s. Indeed, by monotonicity of $\rho_{S, \tau}[\cdot]$, we have $\rho_{S, \tau}[\eta_n]\leq \rho_{S, \tau}[\eta]$; hence, $\limsup_{n\to+\infty}\rho_{S, \tau}[\eta_n]\leq \rho_{S, \tau}[\eta]. $ On the other hand, by   
(vii), $\rho_{S, \tau}[\eta]\leq \liminf_{n\to+\infty}\rho_{S, \tau}[\eta_n]. $ Hence, $\rho_{S, \tau}[\eta]= \lim_{n\to+\infty}\rho_{S, \tau}[\eta_n]$. Such type of property is also known in the literature (e.g. risk measures) as \textbf{continuity from below}. 
\end{Remark}

Let us emphasize that no assumptions of convexity (or concavity) or translation invariance of the non-linear operators $\rho$ are made.

\subsection{ Preliminary study of the problem and its value}\label{subsect2}

As is usual in optimal control, we embed the above optimization problem \eqref{optimal_stopping_problembis} in a larger class of problems by 
considering for each $\nu \in \Theta$, the random variable $V(\nu)$, where 
\begin{equation}\label{optimal_stopping_problem_kbis}
V(\nu):=\esssup_{\tau\in \Theta_{\nu}} \rho_{\nu,\tau}[\xi(\tau)].
\end{equation} 
We note that, if $\rho$ satisfies the property of knowledge preservation (property (iii)), then $\rho_{\nu,\nu}[\xi(\nu)]=\xi(\nu)$.
\begin{lemma}\label{Lemma_admissible}(Admissibility of $V$)
Under the assumption of admissibility (ii) and "generalized" zero-one law (vi) on the non-linear operators, the family of random variables
$V:=  (V(\nu), \, \nu \in \Theta)$ defined in \eqref{optimal_stopping_problem_kbis} is \emph{admissible} in the sense of Definition \ref{def.admi}.

\end{lemma}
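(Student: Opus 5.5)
To establish admissibility we verify the two conditions of Definition \ref{def.admi} in turn.

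\emph{Condition 1.} Fix $\nu\in\Theta$. By property (i), each $\rho_{\nu,\tau}[\xi(\tau)]$ with $\tau\in\Theta_\nu$ is a non-negative $\cf_\nu$-measurable random variable valued in $[0,+\infty]$. The essential supremum of any family of $\cf_\nu$-measurable $[0,+\infty]$-valued random variables is again $\cf_\nu$-measurable and $[0,+\infty]$-valued. Hence $V(\nu)\in L^0_+(\cf_\nu)$.

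\emph{Condition 2.} Fix $\nu,\nu'\in\Theta$ and set $A:=\{\nu=\nu'\}$. Since $\nu,\nu'$ are stopping times, $A\in\cf_{\nu\wedge\nu'}\subset\cf_\nu\cap\cf_{\nu'}$. By symmetry it suffices to show $\mathbf 1_A V(\nu)\le \mathbf 1_A V(\nu')$.

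Take an arbitrary $\tau\in\Theta_\nu$ and define $\tau_A:=\tau\mathbf 1_A+\tau'\mathbf 1_{A^c}$, where we choose $\tau':=\nu'$ (or any element of $\Theta_{\nu'}$). We need $\tau_A\in\Theta_{\nu'}$:
\begin{itemize}
\item On $A$, $\tau\ge\nu=\nu'$; on $A^c$, $\tau_A=\nu'$. So $\tau_A\ge\nu'$.
\item To see $\tau_A\in\Theta$, apply Remark \ref{Remark_set_Theta} with the pair $(\tau,\nu')$. This requires $A\in\cf_{\tau\wedge\nu'}$. On $A$ we have $\tau\wedge\nu'=\nu'=\nu$, and $A\cap\{\tau\wedge\nu'\le t\}=A\cap\{\nu\le t\}\in\cf_t$. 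Alternatively, write $\tau_A$ directly in the form \eqref{form} using the sets $A_k\cap A$ and $\{\nu'=\theta_k\}\cap A^c$, which lie in $\cf_{\theta_k}$ because $A\in\cf_\nu\subset\cf_\tau$ and on the relevant sets the stopping time is $\theta_k$.
\end{itemize}

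We then chain equalities on $A$:
\begin{itemize}
\item Since $\tau_A=\tau$ on $A$, $\tau,\tau_A\in\Theta_\nu$ and $A\in\cf_\nu$, property (vi) gives $\mathbf 1_A\rho_{\nu,\tau}[\xi(\tau)]=\mathbf 1_A\rho_{\nu,\tau_A}[\xi(\tau_A)]$.
\item Property (ii) (admissibility in the first index, with the same second index $\tau_A$) gives $\rho_{\nu,\tau_A}[\xi(\tau_A)]=\rho_{\nu',\tau_A}[\xi(\tau_A)]$ on $\{\nu=\nu'\}=A$.
\item Since $\tau_A\in\Theta_{\nu'}$, we have $\rho_{\nu',\tau_A}[\xi(\tau_A)]\le V(\nu')$.
\end{itemize}
Hence $\mathbf 1_A\rho_{\nu,\tau}[\xi(\tau)]\le\mathbf 1_A V(\nu')$ for every $\tau\in\Theta_\nu$. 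Taking the essential supremum over $\tau\in\Theta_\nu$, and using that multiplication by the $\cf_\nu$-measurable indicator $\mathbf 1_A$ commutes with the ess sup, yields $\mathbf 1_A V(\nu)\le\mathbf 1_A V(\nu')$. Exchanging the roles of $\nu$ and $\nu'$ gives equality on $A$.

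The main obstacle is the measurability check that $\tau_A\in\Theta_{\nu'}$, i.e.\ that the concatenating set is suitably measurable so that Remark \ref{Remark_set_Theta} applies. Every other step is a direct application of (ii), (vi) and the definition of the essential supremum.
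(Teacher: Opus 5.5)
Your strategy is the paper's: the same concatenated time $\tau_A=\tau\mathbf{1}_A+\nu'\mathbf{1}_{A^c}$, properties (ii) and (vi), and the bound by $V(\nu')$. However, the first step of your chain rests on a false claim. You assert $\tau,\tau_A\in\Theta_\nu$. On $A^c=\{\nu\neq\nu'\}$ you have $\tau_A=\nu'$, and this is strictly smaller than $\nu$ on $\{\nu'<\nu\}$. So in general $\tau_A\notin\Theta_\nu$. The ``generalized'' zero-one law (vi) at $S=\nu$ requires both stopping times to lie in $\Theta_S$, so it cannot be invoked for the pair $(\tau,\tau_A)$ as written. Your parenthetical ``or any element of $\Theta_{\nu'}$'' does not help, because the replacement must also dominate $\nu$ on $A^c$. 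The paper's ordering (first (ii), then (vi) at $S=\nu'$) has the mirror-image issue: there $\tau$ is only in $\Theta_\nu$ and need not lie in $\Theta_{\nu'}$.

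The repair is easy. Replace $\tau_A$ by $\tau\vee\nu'$, or equivalently take $\tau':=\nu\vee\nu'$ in your concatenation.
\begin{itemize}
\item This time is in $\Theta$ by closure under pairwise maximization (Remark \ref{Remark_set_Theta}).
\item It dominates both $\nu$ and $\nu'$, so it lies in $\Theta_\nu\cap\Theta_{\nu'}$.
\item It equals $\tau$ on $A$, since $\tau\geq\nu=\nu'$ there.
\end{itemize}
With this choice your three steps go through verbatim. First apply (vi) at $S=\nu$ with $A\in\cf_\nu$. Then apply (ii) on $\{\nu=\nu'\}$ with the common second index $\tau\vee\nu'$. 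Finally use $\rho_{\nu',\tau\vee\nu'}[\xi(\tau\vee\nu')]\leq V(\nu')$. This choice also removes the concatenation measurability check that you identified as the main obstacle. Condition 1, the interchange of $\mathbf{1}_A$ with the essential supremum, and the symmetry argument are fine as written.
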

\noindent
The proof uses arguments similar to those of  Lemma 8.1 in \cite{Grigorova-3}, combined with some properties of the non-linear operators $\rho$, with suitable modifications to account for the infinite horizon case.

\begin{proof} 
Property 1. of the definition of admissibility follows from the definition of the essential supremum, the random variables of the family   $(\rho_{\nu,\tau}[\xi(\tau)], \tau\in\Theta_\nu)$ being $\cf_\nu$-measurable. \\
Let us prove Property 2. Let $\nu$ and $\nu'$ be two stopping times in $\Theta$. We set $A:=\{\nu=\nu'\}$ and we show that $V(\nu)=V(\nu')$, a.s. on $A$. 
We have   
\begin{equation}\label{eq_toto1}
\begin{aligned}
\I_AV(\nu)&=\I_A\esssup_{\tau\in \Theta_{\nu}} \rho_{\nu,\tau}[\xi(\tau)] =\esssup_{\tau\in \Theta_{\nu}} \I_A\rho_{\nu,\tau}[\xi(\tau)]=\esssup_{\tau\in \Theta_{\nu}} \I_A\rho_{\nu',\tau}[\xi(\tau)],
\end{aligned}
\end{equation}
where we have used the admissibility property of $\rho$ for the last equality.
\\
Let $\tau\in\Theta_\nu$. We set $\tau_A:= \tau\I_A+\nu'\I_{A^c}$. 
We note that $\tau_A\in\Theta$ (by the property of concatenation of $\Theta$); 
moreover,  $\tau_A\in\Theta_{\nu'}$ and $\tau_A=\tau$ a.s. on $A$ (by the definition of  $\tau_A$). %We note that  $\tau_A \geq \nu'$ a.s.
\\
Using this, the admissibility of the family $\xi$, and the "generalized" zero-one law property of $\rho$, we get
$
\I_A\rho_{\nu',\tau}[\xi(\tau)]=\I_A\rho_{\nu',\tau_A}[\xi(\tau_A)]\leq \I_AV(\nu').
$
As $\tau\in\Theta_\nu$ is arbitrary, we conclude that 
$\esssup_{\tau\in \Theta_{\nu}}\I_A\rho_{\nu',\tau}[\xi(\tau)]\leq \I_AV(\nu').$
Combining this inequality with \eqref{eq_toto1} gives 
$\I_AV(\nu)\leq \I_AV(\nu').$ We obtain the converse inequality by interchanging the roles of $\nu$ and $\nu'$. 
\end{proof}
%%%%
\noindent
Under the assumptions of the above lemma, the following remark holds true.

\begin{Remark}\label{Rmk_consequence_admissible}
As a consequence of the \textbf{admissibility} of the value family $V$, we have: 
for each  $k\in\N, $ it holds
$V(\nu)= V (\theta_k)$ a.s. on $\{\nu=\theta_k\}$.  Hence, under the assumptions of Lemma \ref{Lemma_admissible}, for $\nu\in\Theta$ of the form $\nu=  \sum_{k=0}^{+\infty} \theta_k {\bf 1}_{A_k}$, we have $V(\nu)=\sum_{k=0}^{+\infty} V(\theta_k ){\bf 1}_{A_k}$.
\end{Remark}

\begin{Remark}\label{TEXRemark33}
1. Under the assumption of knowledge preservation (iii) on $\rho$, we have $V(\theta_k) \geq \xi(\theta_k)$, for each $k\in\N.$\\
Indeed, $V(\theta_{k}) = \esssup_{\tau \in \Theta_{\theta_{k}}}\rho_{\theta_{k}, \tau}[\xi(\tau)] \geq \rho_{\theta_{k}, \theta_{k}}[\xi(\theta_{k})]$, and by the property (iii) of the non-linear operators, we have $\rho_{\theta_{k}, \theta_{k}}[\xi(\theta_{k})] = \xi(\theta_{k})$. Hence, $V(\theta_{k}) \geq \xi(\theta_{k})$.\\
[0.2cm]
2.  If, moreover, $\rho$ satisfies the properties of admissibility (ii) and "generalized" zero-one law (vi), then, for each $\tau\in\Theta,$  $V(\tau) \geq \xi(\tau)$. \\
This follows from the first statement of this remark, and from the admissibility of $\xi$ and of $V$ (cf. Lemma \ref{Lemma_admissible} and Remark \ref{Rmk_consequence_admissible}).  
Hence, under these assumptions, if the family $\xi$ is non-negative, then the value family $V$ is non-negative. 
\end{Remark}

\noindent

\noindent
\noindent
Let us recall the notion of \emph{$(\Theta, \rho)$-(super)martingale family} from \cite{Grigorova-5}, which formally remains the same in our infinite horizon framework and with random variables valued in $[0,+\infty]$. 
\begin{definition}\label{def_supermartingaleBIS}
Let $\phi=(\phi(\tau), \, \tau \in \Theta)$ be a \emph{non-negative admissible} family.
 We say that $\phi$
 is a \emph{$(\Theta, \rho)$}-supermartingale (resp. \emph{$(\Theta, \rho)$}-martingale) \emph{family} if 
 for all $\sigma, \tau$ in $\Theta$ such that $\sigma\leq\tau$ a.s., we have 
$$\rho_{\sigma,\tau}[\phi({\tau})]\leq \phi({\sigma}) \text{ (resp. }=\phi(\sigma)) \text{ a.s.} $$
\end{definition}
\noindent

The following properties on the value family $V$ have been established in \cite{Grigorova-5} for the finite case (under appropriate integrability assumptions) and hold true also in our infinite horizon framework with pay-offs valued in $[0,+\infty]$. 

\begin{theorem}\label{Theorem_supermartingale_Snellenvelope}
  \begin{enumerate}
      \item [1.]\emph{($(\Theta, \rho)$-supermartingale)}
     Under the assumption of admissibility (ii),
     consistency (v),
      "generalized" zero-one law (vi),
     and monotone Fatou property with respect to the terminal condition (vii)
      on the non-linear operators,
      the value family $V$ is a \emph{$(\Theta, \rho)$}-supermartingale family.
      \item [2.]\emph{($(\Theta, \rho)$-Snell envelope)}
       If, moreover, the non-linear operators also satisfy the properties of 
        knowledge preservation (iii),
        and monotonicity (iv),
          the value family $V$ is equal to the \emph{$(\Theta, \rho)$-Snell envelope} of the family $\xi$, that is, 
          the smallest \emph{$(\Theta, \rho)$}-supermartingale family dominating the family $\xi=(\xi(\tau), \tau \in \Theta)$.
  \end{enumerate}
\end{theorem}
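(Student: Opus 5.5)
The plan follows the classical Snell envelope argument. The key tool is a maximizing sequence obtained from the lattice property of the family $(\rho_{S,\tau}[\xi(\tau)],\,\tau\in\Theta_S)$, passed to the limit through the monotone Fatou property (vii).

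\emph{Step 1 (directedness).} Fix $S\in\Theta$. I would show that the family $\{\rho_{S,\tau}[\xi(\tau)],\ \tau\in\Theta_S\}$ is upwards directed. Given $\tau_1,\tau_2\in\Theta_S$, set $A:=\{\rho_{S,\tau_1}[\xi(\tau_1)]\geq \rho_{S,\tau_2}[\xi(\tau_2)]\}\in\cf_S$ and $\tau_3:=\tau_1\I_A+\tau_2\I_{A^c}$. Then $\tau_3\in\Theta_S$ by concatenation (Remark \ref{Remark_set_Theta}), since $A\in\cf_S\subset\cf_{\tau_1\wedge\tau_2}$. The generalized zero-one law (vi), together with admissibility of $\xi$, gives
\[
\rho_{S,\tau_3}[\xi(\tau_3)]=\rho_{S,\tau_1}[\xi(\tau_1)]\vee\rho_{S,\tau_2}[\xi(\tau_2)].
\]
Consequently there is a sequence $(\tau^n)\subset\Theta_S$ with $\rho_{S,\tau^n}[\xi(\tau^n)]\uparrow V(S)$ a.s. This holds even if the values may equal $+\infty$, because essential suprema of $[0,+\infty]$-valued families still admit non-decreasing approximating sequences.

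\emph{Step 2 (supermartingale property, part 1).} Let $S\leq\tau$ in $\Theta$, and take $(\tau^n)\subset\Theta_\tau$ with $\rho_{\tau,\tau^n}[\xi(\tau^n)]\uparrow V(\tau)$, as in Step 1 applied at $\tau$. The Fatou property (vii) gives $\rho_{S,\tau}[V(\tau)]\leq\liminf_n\rho_{S,\tau}[\rho_{\tau,\tau^n}[\xi(\tau^n)]]$. By consistency (v), $\rho_{S,\tau}[\rho_{\tau,\tau^n}[\xi(\tau^n)]]=\rho_{S,\tau^n}[\xi(\tau^n)]$, which is at most $V(S)$ because $\tau^n\in\Theta_S$. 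Hence $\rho_{S,\tau}[V(\tau)]\leq V(S)$. Admissibility and non-negativity of $V$ come from Lemma \ref{Lemma_admissible}, using (ii) and (vi); non-negativity is immediate since all terms lie in $[0,+\infty]$. Monotonicity is not needed here because (vii) alone suffices. The main obstacle is Step 1: the zero-one law (vi) is only formulated for $\tau,\tau'$ equal on $A$, so one must check that it applies to the pairs $(\tau_3,\tau_1)$ on $A$ and $(\tau_3,\tau_2)$ on $A^c$. One must also make sure that the values $+\infty$ cause no trouble in the essential-supremum argument; comparisons in $[0,+\infty]$ are fine.

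\emph{Step 3 (Snell envelope, part 2).} Domination $V\geq\xi$ is Remark \ref{TEXRemark33}, which uses (ii), (iii) and (vi). For minimality, let $\phi$ be any $(\Theta,\rho)$-supermartingale family with $\phi\geq\xi$. For $S\in\Theta$ and $\tau\in\Theta_S$, monotonicity (iv) and the supermartingale property give $\rho_{S,\tau}[\xi(\tau)]\leq\rho_{S,\tau}[\phi(\tau)]\leq\phi(S)$. Taking the essential supremum over $\tau\in\Theta_S$ yields $V(S)\leq\phi(S)$. Combined with part 1, this shows that $V$ is the smallest such family.
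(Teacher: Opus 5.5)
Your proposal is correct and follows essentially the same route as the paper's proof. Your Step 1 is the maximizing sequence lemma (Lemma \ref{Maximizing_sequence_lemma}). The supermartingale inequality comes, as in the paper, from the monotone Fatou property (vii), then consistency (v) and $\Theta_\tau\subset\Theta_S$. Minimality follows from monotonicity (iv) and the supermartingale property of the competing family, with domination taken from Remark \ref{TEXRemark33}. Your observations that (vi) applies directly to $(\tau_3,\tau_1)$ on $A$ and $(\tau_3,\tau_2)$ on $A^c$, and that monotonicity is not needed for Statement 1, are both accurate.
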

To prove this theorem, we first state a useful lemma.
\begin{lemma}{(Maximizing sequence lemma)}\label{Maximizing_sequence_lemma}
    Under the assumption of "generalized" zero-one law (vi) of the non-linear operators,
    there exists a maximizing sequence for the value $V(S):=\esssup_{\tau\in \Theta_{S}} \rho_{S,\tau}[\xi(\tau)]$.
\end{lemma}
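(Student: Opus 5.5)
The plan is the standard one: show that the family $\{\rho_{S,\tau}[\xi(\tau)],\ \tau\in\Theta_S\}$ is \emph{directed upwards} (stable under pairwise maximization), and then apply the classical property of the essential supremum of an upward-directed family. That property states that there is a sequence $(\tau_n)_{n\in\N}$ in $\Theta_S$ such that $\rho_{S,\tau_n}[\xi(\tau_n)]\uparrow V(S)$ a.s. It holds for random variables with values in $[0,+\infty]$, e.g. after composing with the increasing bijection $\arctan$ onto $[0,\pi/2]$, so non-integrability causes no trouble.

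Fix $\tau,\tau'\in\Theta_S$ and set
\[
A:=\{\rho_{S,\tau}[\xi(\tau)]\geq \rho_{S,\tau'}[\xi(\tau')]\}.
\]
Both random variables are $\cf_S$-measurable by property (i), so $A\in\cf_S$. Since $S\le\tau\wedge\tau'$, we have $\cf_S\subset\cf_{\tau\wedge\tau'}$, so $A\in\cf_{\tau\wedge\tau'}$. Define $\tau'':=\tau\I_A+\tau'\I_{A^c}$. By Remark \ref{Remark_set_Theta} (closedness of $\Theta$ under concatenation), $\tau''\in\Theta$, and clearly $\tau''\geq S$, so $\tau''\in\Theta_S$.

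Since $\tau''=\tau$ on $A$, the "generalized" zero-one law (vi), applied with $A\in\cf_S$, gives $\I_A\rho_{S,\tau''}[\xi(\tau'')]=\I_A\rho_{S,\tau}[\xi(\tau)]$. Similarly, (vi) applied on $A^c\in\cf_S$ gives $\I_{A^c}\rho_{S,\tau''}[\xi(\tau'')]=\I_{A^c}\rho_{S,\tau'}[\xi(\tau')]$. Adding the two identities,
\[
\rho_{S,\tau''}[\xi(\tau'')]=\rho_{S,\tau}[\xi(\tau)]\vee\rho_{S,\tau'}[\xi(\tau')],
\]
which is exactly the directedness.

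Finally, I would recall, or sketch briefly, the essential-supremum argument for directed families. One first takes a countable subfamily $(\tau^{(m)})$ whose pointwise supremum equals $V(S)$; this is the standard characterization of the ess sup. Then one sets $\tau_n$ to be the iterated pairwise "max-realizing" concatenation of $\tau^{(0)},\dots,\tau^{(n)}$ constructed above. This yields a non-decreasing sequence $\rho_{S,\tau_n}[\xi(\tau_n)]$ that dominates $\rho_{S,\tau^{(m)}}[\xi(\tau^{(m)})]$ for $m\le n$ and is bounded by $V(S)$, hence converges to $V(S)$. The only real point to check is the measurability requirement $A\in\cf_{\tau\wedge\tau'}$ for the concatenation; this follows from $A\in\cf_S$ and $S\le\tau\wedge\tau'$, so I do not expect a genuine obstacle.
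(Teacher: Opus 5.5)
Your proof is correct and follows the paper's approach: you build the concatenation $\tau\I_A+\tau'\I_{A^c}$ with $A\in\cf_S$ comparing the two evaluations, use the "generalized" zero-one law (vi) on $A$ and $A^c$ to obtain stability under pairwise maximization, and then invoke the classical essential-supremum property for upward-directed families. Your remarks on the measurability $A\in\cf_{\tau\wedge\tau'}$ and on $[0,+\infty]$-valued random variables only make explicit points that the paper leaves implicit.
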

The proof of this lemma is similar to that of Lemma 2.3 in \cite{Grigorova-4}, Lemma 2.11 in \cite{Grigorova-5} and is given for the convenience of the reader. It is enough to show that the family $(\rho_{S, \tau}[\xi(\tau)])_{\tau \in \Theta_{S}}$ is stable under pairwise maximization.
     Then, the result follows from a well-known property of the essential supremum.
\begin{proof} 
    Let $\tau$ and $\tau'$ be in $\Theta_{S}$. 
    Set $A:=\{\rho_{S, \tau'}[\xi(\tau')] \leq \rho_{S, \tau}[\xi(\tau)]\}$
    and
    let $\nu:= \I_A \tau + \I_{A^c} \tau'$.
    Clearly, $A \in {\cal F}_{S}$.
    Furthermore, $\nu \in \Theta_{S}$ (by the concatenation property of the set $\Theta_{S}$).
    Also, $\nu= \tau$ on $A$, and $\nu=\tau'$ on $A^c$.
 By the "generalized" zero-one law of the non-linear operators, we get:
\begin{equation}
 \begin{split}
\rho_{S, \nu}[\xi(\nu)]
 = \I_A \rho_{S, \nu}[\xi(\nu)] + \I_{A^c} \rho_{S, \nu}[\xi(\nu)] 
 &= \I_A \rho_{S, \tau}[\xi(\tau)] + \I_{A^c} \rho_{S, \tau'}[\xi(\tau')] 
 \\&= \max\left(\rho_{S, \tau'}[\xi(\tau')], \rho_{S, \tau}[\xi(\tau)]\right).
\end{split}
\end{equation}
This shows the stability under pairwise maximization of the value family \\ $(\rho_{S, \tau}[\xi(\tau)])_{\tau \in \Theta_{S}}$.
\end{proof}
\begin{proof}[Proof of Theorem \ref{Theorem_supermartingale_Snellenvelope}]
The proof of the theorem relies on the maximizing sequence lemma and on properties of $\rho$, by following the same arguments as those of the proof of Theorem 2.10 in \cite{Grigorova-5} and is omitted. 
\end{proof}

\subsection{The Dynamic Programming Principle in the case of infinite horizon} 
\begin{definition}[Dynamic Programming Principle in the case of infinite horizon]
We say that an admissible non-negative family $\phi$ satisfies the Dynamic Programming Principle (abridged DPP) in the case of infinite horizon, if the following property holds true: \\
For all $k \in \N$, 
\begin{equation}
    \phi(\theta_k)=\max\left(\xi(\theta_k), \rho_{\theta_k,\theta_{k+1}}[\phi(\theta_{k+1})]\right).
\end{equation}
\end{definition}

In this sub-section, we will show that, in the infinite horizon case, the value family satisfies the DPP. 
 In order to do this, we are first interested in "what happens on the right of $V(\theta_k)$", for each $k \in \N$.
\\
Let $k \in \N$ be fixed. We define
\[
\Theta_{\theta_k^+} :=\{\tau \in \Theta_{\theta_k}: \tau>\theta_k \}.
\]
Let us note that this set is non-empty. 
We define the strict value $V^+(\theta_k)$ at $\theta_k$ by:
\[
V^+(\theta_k) := \esssup_{\tau\in \Theta_{\theta_k^+}} \rho_{\theta_k,\tau}[\xi(\tau)].
\]

\begin{Remark}\label{Rmk_sets}
    We have $\Theta_{\theta_k^+}=\Theta_{\theta_{k+1}}$.
   \\ Indeed, let $\tau \in \Theta_{\theta_k^+}$. 
    Then, $\tau$ can be written as: $\tau =  \sum_{i=k+1}^{+\infty} \theta_i {\bf 1}_{A_i}$, 
    where for each $i \in \N$ such that $i\geq k+1$, $A_i \in \cf_{\theta_i}$, and $(A_{i})_{i \geq k +1}$ form a partition of $\Omega$.
    Hence, $\tau \in \Theta_{\theta_{k+1}}$.  
    Conversely, if $\tau \in \Theta_{\theta_{k+1}}$, then clearly $\tau \in \Theta_{\theta_k^+}$.
    \\
Due to this remark, we get 
\begin{equation} \label{Eq_V+_defn}
    V^+(\theta_k) 
    = \esssup_{\tau\in \Theta_{\theta_k^+}} \rho_{\theta_k,\tau}[\xi(\tau)]
    = \esssup_{\tau\in \Theta_{\theta_{k+1}}} \rho_{\theta_k,\tau}[\xi(\tau)].
\end{equation}
\end{Remark}

\begin{lemma} \label{Maximizing_sequence_lemma_for_V+}
    Under the assumption of "generalized" zero-one law (vi) on the non-linear operators,
    there exists a maximizing sequence for $V^+(\theta_k)$.
\end{lemma}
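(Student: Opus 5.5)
The plan is to copy the argument of Lemma \ref{Maximizing_sequence_lemma}, with $S$ replaced by $\theta_k$ and the index set $\Theta_{\theta_k}$ replaced by $\Theta_{\theta_k^+}=\Theta_{\theta_{k+1}}$ (Remark \ref{Rmk_sets}). By the standard property of the essential supremum, if a family of random variables is stable under pairwise maximization (upwards directed), there is a sequence in it whose values increase a.s. to the essential supremum. So it suffices to show that the family $(\rho_{\theta_k,\tau}[\xi(\tau)])_{\tau\in\Theta_{\theta_{k+1}}}$ is stable under pairwise maximization.

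Fix $\tau,\tau'\in\Theta_{\theta_{k+1}}$ and set
\[
A:=\{\rho_{\theta_k,\tau'}[\xi(\tau')]\leq\rho_{\theta_k,\tau}[\xi(\tau)]\},
\]
so that $A\in\cf_{\theta_k}$ by property (i). Define $\nu:=\tau\I_A+\tau'\I_{A^c}$. Since $\theta_k\leq\theta_{k+1}\leq\tau\wedge\tau'$, we have $\cf_{\theta_k}\subset\cf_{\tau\wedge\tau'}$, hence $A\in\cf_{\tau\wedge\tau'}$. The concatenation property (Remark \ref{Remark_set_Theta}) then gives $\nu\in\Theta$, and clearly $\nu\geq\theta_{k+1}$, so $\nu\in\Theta_{\theta_{k+1}}$. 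Note also that $\tau,\tau',\nu\in\Theta_{\theta_k}$.

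Next apply the generalized zero-one law (vi) with $S=\theta_k$, twice: once on $A$, where $\nu=\tau$, and once on $A^c\in\cf_{\theta_k}$, where $\nu=\tau'$. This gives
\[
\rho_{\theta_k,\nu}[\xi(\nu)]=\I_A\rho_{\theta_k,\tau}[\xi(\tau)]+\I_{A^c}\rho_{\theta_k,\tau'}[\xi(\tau')]=\max\bigl(\rho_{\theta_k,\tau}[\xi(\tau)],\rho_{\theta_k,\tau'}[\xi(\tau')]\bigr).
\]
This proves stability under pairwise maximization. The essential supremum property then yields a sequence $(\tau_n)\subset\Theta_{\theta_{k+1}}$ with $\rho_{\theta_k,\tau_n}[\xi(\tau_n)]\uparrow V^+(\theta_k)$ a.s., using \eqref{Eq_V+_defn}.

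The only point needing care is the measurability check $A\in\cf_{\tau\wedge\tau'}$, which is needed for concatenation and which uses $\tau,\tau'\geq\theta_k$, together with checking that $\nu$ remains strictly after $\theta_k$. Both follow by working in $\Theta_{\theta_{k+1}}$, so no real obstacle is expected. Values in $[0,+\infty]$ cause no problem, since the essential supremum argument works for extended-valued random variables.
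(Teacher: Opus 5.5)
Your proof is correct and is the argument the paper intends: the paper defers to the proof of Lemma~\ref{Maximizing_sequence_lemma}, which shows that $(\rho_{\theta_k,\tau}[\xi(\tau)])_{\tau}$ is stable under pairwise maximization (concatenation on the $\cf_{\theta_k}$-measurable set $A$ plus the generalized zero-one law) and then applies the standard essential-supremum property. The same computation also works verbatim on $\Theta_{\theta_k^+}$ itself, since $\tau,\tau'>\theta_k$ implies $\nu>\theta_k$. So you do not need the identification $\Theta_{\theta_k^+}=\Theta_{\theta_{k+1}}$ of Remark~\ref{Rmk_sets}, which tacitly requires $\theta_k<\theta_{k+1}$ a.s.
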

\begin{proof}
  The proof of this lemma is similar to the proof of the existence of a maximizing sequence for $V(\theta_k)$, and is left to the readers. (We also refer to the proof of Lemma 2.3 in \cite{Grigorova-4} for similar arguments).
\end{proof}

The following proposition establishes that the strict value $V^+(\theta_k)$ at $\theta_k$ is equal to the non-linear evaluation from $\theta_k$ perspective of the value $V(\theta_{k+1})$.

\begin{proposition} \label{Prop_1}
    Under the assumptions of monotonicity (iv),
     consistency (v),
      "generalized" zero-one law (vi),
     and monotone Fatou property with respect to the terminal condition (vii),
      on the non-linear operators,
      we have
       \[
     V^+(\theta_k)= \rho_{\theta_k,\theta_{k+1}}[V(\theta_{k+1})].
      \]
\end{proposition}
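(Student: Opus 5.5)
We prove the two inequalities separately, starting from the representation \eqref{Eq_V+_defn}, i.e. $V^+(\theta_k)=\esssup_{\tau\in\Theta_{\theta_{k+1}}}\rho_{\theta_k,\tau}[\xi(\tau)]$.

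\emph{Inequality $V^+(\theta_k)\leq \rho_{\theta_k,\theta_{k+1}}[V(\theta_{k+1})]$.} Fix $\tau\in\Theta_{\theta_{k+1}}$. Since $\theta_k\leq\theta_{k+1}\leq\tau$, consistency (v) gives
\[
\rho_{\theta_k,\tau}[\xi(\tau)]=\rho_{\theta_k,\theta_{k+1}}\big[\rho_{\theta_{k+1},\tau}[\xi(\tau)]\big].
\]
By definition of $V(\theta_{k+1})$ we have $\rho_{\theta_{k+1},\tau}[\xi(\tau)]\leq V(\theta_{k+1})$. Monotonicity (iv) then yields $\rho_{\theta_k,\tau}[\xi(\tau)]\leq \rho_{\theta_k,\theta_{k+1}}[V(\theta_{k+1})]$. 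Taking the essential supremum over $\tau\in\Theta_{\theta_{k+1}}$ gives the inequality.

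\emph{Inequality $V^+(\theta_k)\geq \rho_{\theta_k,\theta_{k+1}}[V(\theta_{k+1})]$.} By Lemma \ref{Maximizing_sequence_lemma}, applied with $S=\theta_{k+1}$, there is a sequence $(\tau_n)\subset\Theta_{\theta_{k+1}}$ such that $\rho_{\theta_{k+1},\tau_n}[\xi(\tau_n)]\uparrow V(\theta_{k+1})$. Its proof shows that the family is stable under pairwise maximization, which is what lets us take the sequence non-decreasing: replace $\tau_n$ by the concatenation realizing the maximum of the first $n$ terms, which stays in $\Theta_{\theta_{k+1}}$. Set $\eta_n:=\rho_{\theta_{k+1},\tau_n}[\xi(\tau_n)]\in L^0_+(\cf_{\theta_{k+1}})$. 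These form a non-decreasing sequence converging a.s. to $V(\theta_{k+1})$. By monotonicity (iv) and the Fatou property (vii), i.e. continuity from below as in Remark \ref{Remark_CB},
\[
\rho_{\theta_k,\theta_{k+1}}[V(\theta_{k+1})]=\lim_{n\to+\infty}\rho_{\theta_k,\theta_{k+1}}[\eta_n].
\]
By consistency (v), $\rho_{\theta_k,\theta_{k+1}}[\eta_n]=\rho_{\theta_k,\tau_n}[\xi(\tau_n)]$, and each of these terms is at most $V^+(\theta_k)$. Letting $n\to\infty$ concludes.

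The only delicate point is securing a \emph{non-decreasing} maximizing sequence, since (vii) is a monotone Fatou property and needs monotone convergence. This is handled by the stability under pairwise maximization, which uses (vi) and the concatenation property of $\Theta_{\theta_{k+1}}$ from Remark \ref{Remark_set_Theta}. We also note that $V(\theta_{k+1})$ may take the value $+\infty$. Since the operators act on $[0,+\infty]$-valued variables, this causes no difficulty.
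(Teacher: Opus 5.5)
Your proof is correct and follows essentially the argument the paper points to (Proposition 2.15 of the cited Grigorova--Quenez--Yuan paper). For the upper bound it uses consistency and monotonicity together with Eq.~\eqref{Eq_V+_defn}, and for the lower bound it uses a non-decreasing maximizing sequence for $V(\theta_{k+1})$ with the monotone Fatou property and consistency. The only difference is that for $V^+(\theta_k)\leq \rho_{\theta_k,\theta_{k+1}}[V(\theta_{k+1})]$ you bound each $\rho_{\theta_k,\tau}[\xi(\tau)]$ and take the essential supremum directly, whereas the paper passes through a maximizing sequence for $V^+(\theta_k)$. This changes nothing of substance.
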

\begin{proof}
The proof of the proposition relies on the maximizing sequence lemmas for $V(\theta_{k+1})$ and $V^+(\theta_k)$ (cf. Lemma \ref{Maximizing_sequence_lemma} and Lemma \ref{Maximizing_sequence_lemma_for_V+}), Eq. \eqref{Eq_V+_defn} (cf. Remark \ref{Rmk_sets})
and on properties of $\rho$, by following the same arguments as those of the proof of Proposition 2.15 in \cite{Grigorova-5} and is omitted. 
\end{proof}
\begin{proposition} \label{Prop_2}
     Under the assumption of  knowledge preservation (iii) and
      "generalized" zero-one law (vi)
      on the non-linear operators, 
      we have
      \[
    V(\theta_k)= \xi(\theta_k) \vee V^+(\theta_k).
      \]
\end{proposition}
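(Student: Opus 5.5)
The plan is to prove the two inequalities separately.

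\emph{Lower bound.} By Remark \ref{TEXRemark33}, part 1 (which uses knowledge preservation (iii)), we have $V(\theta_k)\geq \xi(\theta_k)$. Moreover, $\Theta_{\theta_k^+}=\Theta_{\theta_{k+1}}\subset \Theta_{\theta_k}$ (Remark \ref{Rmk_sets}), so each term in the essential supremum defining $V^+(\theta_k)$ also appears in the one defining $V(\theta_k)$. Hence $V(\theta_k)\geq V^+(\theta_k)$, and therefore $V(\theta_k)\geq \xi(\theta_k)\vee V^+(\theta_k)$.

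\emph{Upper bound.} It suffices to show that for each $\tau\in\Theta_{\theta_k}$,
\[
\rho_{\theta_k,\tau}[\xi(\tau)]\leq \xi(\theta_k)\vee V^+(\theta_k) \quad \text{a.s.}
\]
Fix such a $\tau$ and set $A:=\{\tau=\theta_k\}$, which lies in $\cf_{\theta_k}$ since both are stopping times. We split $\Omega$ into $A$ and $A^c$.

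On $A$, we apply the generalized zero-one law (vi) with $\tau$ and $\tau'=\theta_k$, which agree on $A$. This gives $\I_A\rho_{\theta_k,\tau}[\xi(\tau)]=\I_A\rho_{\theta_k,\theta_k}[\xi(\theta_k)]$, and by (iii) the right-hand side equals $\I_A\xi(\theta_k)$.

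On $A^c$, we define $\tau':=\tau\I_{A^c}+\theta_{k+1}\I_A$. This is a concatenation of $\tau$ and $\theta_{k+1}$ along $A^c$, and $A^c\in\cf_{\theta_k}\subset\cf_{\tau\wedge\theta_{k+1}}$ because $\tau\wedge\theta_{k+1}\geq\theta_k$. By Remark \ref{Remark_set_Theta}, $\tau'\in\Theta$. Also $\tau'\geq\theta_{k+1}$: on $A$ this holds by construction, and on $A^c$ we have $\tau>\theta_k$. Since $\tau$ takes values in the grid $\{\theta_j\}$, on $A^c$ it equals some $\theta_j$ with $\theta_j>\theta_k$, which forces $j\geq k+1$ and hence $\tau\geq\theta_{k+1}$. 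This is exactly the argument of Remark \ref{Rmk_sets}. Thus $\tau'\in\Theta_{\theta_{k+1}}$. Applying (vi) on $A^c$, where $\tau=\tau'$, gives
\[
\I_{A^c}\rho_{\theta_k,\tau}[\xi(\tau)]=\I_{A^c}\rho_{\theta_k,\tau'}[\xi(\tau')]\leq \I_{A^c}V^+(\theta_k),
\]
using \eqref{Eq_V+_defn}. Adding the two pieces yields $\rho_{\theta_k,\tau}[\xi(\tau)]\leq \xi(\theta_k)\vee V^+(\theta_k)$. Taking the essential supremum over $\tau\in\Theta_{\theta_k}$ then gives the upper bound.

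The main obstacle is the $A^c$ step, namely showing that the modified time $\tau'$ really lies in $\Theta_{\theta_{k+1}}$. This can fail if $\theta_k=\theta_{k+1}$ on some set, because then $\tau$ could equal $\theta_{k+1}=\theta_k$ there and that set would not be included in $A^c$. This causes no problem: on $\{\theta_{k+1}=\theta_k\}$ such points lie in $A$ and are handled by the first case. On $A^c$ we only need $\tau\geq\theta_{k+1}$, which follows from the grid description of $\tau$ together with the monotonicity $\theta_j\leq\theta_{k}$ for $j\leq k$.
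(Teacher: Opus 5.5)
Your proof is correct and follows essentially the same route as the paper. You get the lower bound from Remark \ref{TEXRemark33} and the inclusion $\Theta_{\theta_{k+1}}\subset\Theta_{\theta_k}$, and the upper bound by splitting on $\{\tau=\theta_k\}$ versus $\{\tau>\theta_k\}$: (vi) and (iii) on the first set, and on the second a concatenation of $\tau$ with an element of $\Theta_{\theta_{k+1}}$. The paper concatenates with an arbitrary $\tau'\in\Theta_{\theta_{k+1}}$ where you use $\theta_{k+1}$ itself, which changes nothing. Your treatment of the degenerate event $\{\theta_k=\theta_{k+1}\}$ is sound, because you, like the paper, work with the $\Theta_{\theta_{k+1}}$ form of $V^+(\theta_k)$ from \eqref{Eq_V+_defn}.
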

The proof of this proposition is similar to the proof of Proposition 2.16 in \cite{Grigorova-5} (in the finite horizon case). 
\begin{proof} 
Since $\rho$ satisfies the knowledge preservation property (iii), we can apply Remark \ref{TEXRemark33}. 
By Remark \ref{TEXRemark33}, first statement, we have $V(\theta_k) \geq \xi(\theta_k)$.    
On the other hand, since $\Theta_{\theta_{k+1}} \subset \Theta_{\theta_k}$, we have $V(\theta_k) \geq V^+(\theta_k)$. 
By combining these inequalities, we get $V(\theta_k) \geq \xi(\theta_k) \vee V^+(\theta_k)$. 
\\
It remains to show the converse inequality. 
    Let $\tau \in \Theta_{\theta_k}$ and $\tau' \in \Theta_{\theta_{k+1}}$. 
    We define $\nu:=  \tau \I_{\{\tau > \theta_k\}} +  \tau' \I_{\{\tau \leq \theta_k\}}$. 
    We have $\nu \in \Theta_{\theta_k^+}$.  
   By the concatenation property of $\Theta$, we have  $\nu \in \Theta$.
   Moreover, on $\{\tau > \theta_k\}=\{\tau \geq \theta_{k+1}\}$, $\nu = \tau \geq \theta_{k+1}$ 
   and on $\{\tau \leq  \theta_k\}$, $\nu =\tau' \geq \theta_{k+1}$ (as $\tau' \in  \Theta_{\theta_{k+1}} $).
   Hence, $\nu \in \Theta_{\theta_{k+1}}$.
  Therefore, we have
    \[
      V^+(\theta_k) 
    = \esssup_{\tau \in \Theta_{\theta_k^+}} \rho_{\theta_k,\tau}[\xi(\tau)]
     = \esssup_{\tau \in \Theta_{\theta_{k+1}}} \rho_{\theta_k,\tau}[\xi(\tau)]
    \geq \rho_{\theta_k,\nu}[\xi(\nu)].
    \]
    Hence, we have 
    \begin{equation} \label{eq_1}
       \I_{\{\tau > \theta_k\}}  V^+(\theta_k)  \geq  \I_{\{\tau > \theta_k\}} \rho_{\theta_k,\nu}[\xi(\nu)].
    \end{equation}
    Furthermore, on the set $\{\tau > \theta_k\}$, we have $\nu = \tau$, so the "generalized" zero-one law yields
    \begin{equation}\label{eq_2}
        \I_{\{\tau > \theta_k\}} \rho_{\theta_k,\nu}[\xi(\nu)]
        = \I_{\{\tau > \theta_k\}} \rho_{\theta_k,\tau}[\xi(\tau)].
    \end{equation}
    By combining \eqref{eq_1} and \eqref{eq_2}, we get
    \begin{equation}\label{eq_3}
        \I_{\{\tau > \theta_k\}} \rho_{\theta_k,\tau}[\xi(\tau)]
        \leq \I_{\{\tau > \theta_k\}}  V^+(\theta_k).  
    \end{equation}
    On the other hand, since $\tau \in \Theta_{\theta_k}$, we have 
    \[
    \rho_{\theta_k,\tau}[\xi(\tau)] 
    = \I_{\{\tau = \theta_k\}} \rho_{\theta_k,\tau}[\xi(\tau)] + \I_{\{\tau > \theta_k\}}  \rho_{\theta_k,\tau}[\xi(\tau)].
    \]
 By using the "generalized" zero-one law and the knowledge preservation of the non-linear operator $\rho_{\theta_k,\tau}[\cdot]$, we get
\begin{equation}\label{eq_4}
\I_{\{\tau = \theta_k\}} \rho_{\theta_k,\tau}[\xi(\tau)]
 = \I_{\{\tau = \theta_k\}} \rho_{\theta_k,\theta_k}[\xi(\theta_k)]
 = \I_{\{\tau = \theta_k\}} \xi(\theta_k).
\end{equation}
From \eqref{eq_3} and \eqref{eq_4}, we obtain 
\[ 
 \begin{split}
 \rho_{\theta_k,\tau}[\xi(\tau)] 
    &= \I_{\{\tau = \theta_k\}} \rho_{\theta_k,\tau}[\xi(\tau)] + \I_{\{\tau > \theta_k\}}  \rho_{\theta_k,\tau}[\xi(\tau)]
    \\ & \leq  \I_{\{\tau = \theta_k\}} \xi(\theta_k) +   \I_{\{\tau > \theta_k\}}  V^+(\theta_k)
     \\ & \leq \xi(\theta_k) \vee  V^+(\theta_k).
\end{split}
\]
By taking the essential supremum over $\tau \in \Theta_{\theta_k}$, we obtain  $V(\theta_k) \leq \xi(\theta_k) \vee V^+(\theta_k)$. 
Thus, the proof is completed. 
\end{proof}

We obtain the following theorem by combining Proposition \ref{Prop_1} and Proposition \ref{Prop_2}.

\begin{theorem}[DPP in the case of infinite horizon]\label{Theorem_DPP_infinite_horizon}
    Under the assumptions of  knowledge preservation (iii),
    monotonicity (iv),
     consistency (v),
      "generalized" zero-one law (vi),
     and monotone Fatou property with respect to the terminal condition (vii),
      on the non-linear operators,
      the value family $V$ satisfies the DPP in the case of infinite horizon:
      \[
     V(\theta_k)= \xi(\theta_k) \vee \rho_{\theta_k,\theta_{k+1}}[V(\theta_{k+1})], \quad \text{for each} \quad k \in \N.
      \]
\end{theorem}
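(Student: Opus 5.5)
The theorem follows by chaining Proposition \ref{Prop_2} and Proposition \ref{Prop_1}. The only thing to check is that the assumptions of the theorem cover those of both propositions.

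Fix $k\in\N$. Since $\rho$ satisfies knowledge preservation (iii) and the "generalized" zero-one law (vi), Proposition \ref{Prop_2} gives
\[
V(\theta_k)=\xi(\theta_k)\vee V^+(\theta_k).
\]
Since $\rho$ also satisfies monotonicity (iv), consistency (v), (vi) and the monotone Fatou property (vii), Proposition \ref{Prop_1} gives
\[
V^+(\theta_k)=\rho_{\theta_k,\theta_{k+1}}[V(\theta_{k+1})].
\]
Substituting the second identity into the first yields the DPP at index $k$. As $k$ is arbitrary, this holds for every $k\in\N$.

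Some well-definedness points should be confirmed along the way.

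First, $V(\theta_{k+1})$ must lie in the domain $L^0_+(\cf_{\theta_{k+1}})$ of $\rho_{\theta_k,\theta_{k+1}}$. It is $\cf_{\theta_{k+1}}$-measurable, being an essential supremum of $\cf_{\theta_{k+1}}$-measurable variables, as in the proof of Lemma \ref{Lemma_admissible}. It is non-negative because each $\rho_{\theta_{k+1},\tau}[\xi(\tau)]$ takes values in $[0,+\infty]$ by property (i).

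Second, the maxima are taken pointwise in $[0,+\infty]$, so no integrability is needed.

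Third, the equalities hold a.s. for each fixed $k$. Since $k$ ranges over the countable set $\N$, even a simultaneous a.s. statement would be harmless.

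The substantive obstacle is entirely inside Proposition \ref{Prop_1}, whose proof is referenced rather than given. If I had to supply it, I would proceed in two directions.

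\emph{Inequality ``$\le$''.} For $\tau\in\Theta_{\theta_{k+1}}$, consistency gives
\[
\rho_{\theta_k,\tau}[\xi(\tau)]=\rho_{\theta_k,\theta_{k+1}}\bigl[\rho_{\theta_{k+1},\tau}[\xi(\tau)]\bigr].
\]
Monotonicity, together with $\rho_{\theta_{k+1},\tau}[\xi(\tau)]\le V(\theta_{k+1})$, bounds this by $\rho_{\theta_k,\theta_{k+1}}[V(\theta_{k+1})]$. Taking the essential supremum over $\tau$ and using Eq. \eqref{Eq_V+_defn} gives the inequality.

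\emph{Inequality ``$\ge$''.} Take a maximizing sequence $(\tau_n)\subset\Theta_{\theta_{k+1}}$ for $V(\theta_{k+1})$ from Lemma \ref{Maximizing_sequence_lemma}, chosen non-decreasing in $n$ so that $\rho_{\theta_{k+1},\tau_n}[\xi(\tau_n)]\uparrow V(\theta_{k+1})$. By Remark \ref{Remark_CB} (continuity from below, from (iv) and (vii)),
\[
\rho_{\theta_k,\theta_{k+1}}[V(\theta_{k+1})]=\lim_n \rho_{\theta_k,\theta_{k+1}}\bigl[\rho_{\theta_{k+1},\tau_n}[\xi(\tau_n)]\bigr].
\]
By consistency, each term equals $\rho_{\theta_k,\tau_n}[\xi(\tau_n)]$, which is at most $V^+(\theta_k)$. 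This gives the reverse inequality. The step where the Fatou property (vii) is genuinely needed is exactly this passage to the limit.
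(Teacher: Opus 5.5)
Your proposal is correct and follows the paper's proof, which obtains the theorem by substituting the identity of Proposition \ref{Prop_1} into that of Proposition \ref{Prop_2}, under the union of their hypotheses. Your sketch of the omitted proof of Proposition \ref{Prop_1} is also sound: consistency and monotonicity give one inequality, and a maximizing sequence for $V(\theta_{k+1})$ with non-decreasing values, combined with continuity from below and consistency, gives the other.
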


\section{The $\varepsilon$-optimal stopping times and optimal stopping times}\label{sec_opt}
In this section, we investigate the existence of $\varepsilon$-optimal stopping times and optimal stopping times, 
under suitable assumptions, for the optimization problem (at time $\nu=\theta_k$): 
\begin{equation}\label{optimal_stopping_problem_k}
V(\theta_k):=\esssup_{\tau\in \Theta_{\theta_k}} \rho_{\theta_k,\tau}[\xi(\tau)].
\end{equation} 

We introduce the following assumption on the non-linear operator $\rho$. 
\begin{Assumption}\label{Assumption_rho_C}
 For any positive number $b$, for $\eta \in L^{+}_0(\cf_{\tau})$,
 \[
\rho_{S, \tau}[\eta + b]
\leq \rho_{S, \tau}[\eta]+Cb,
    \]
    where $C$ is a positive constant, depending on $\rho$ only.
\end{Assumption}
\begin{Remark}
    For any sub-additive operator $\rho_{S, \tau}[\cdot]$ this assumption is true, with $C=1$. 
    In the case where $\rho_{S, \tau}[\cdot]=\rho_{S}[\cdot]=\E[\cdot|\cf_S]$, the inequality in Assumption \ref{Assumption_rho_C} is trivially an equality (with $C=1)$. 
   \\ In the case where $\rho_{S, \tau}[\cdot]$ is the $g$-conditional expectation on infinite horizon (cf. Section \ref{Example_1} Eq. \eqref{eq_g_conditional_expectation}), we have:  $p=2$, $\eta \in L^2$ and  $\rho_{S, \tau}[\cdot]=\rho_{S}[\cdot]=\ce_S^g[\cdot]$, and
   the property from Assumption \ref{Assumption_rho_C} follows by Remark \ref{remark_C} (cf. Section \ref{Example_1}). 
\end{Remark}

\begin{theorem}\label{Theorem_optimal_stopping_times}
\begin{enumerate}
    \item Assume that $\limsupn \xi(\theta_n) <+\infty$.
    We assume that: if $\eta \in L^+_0(\cf_{\infty})$, if 
  $(\tau_k)$ $\in \Theta^{\mathbb{N}}$ non-decreasing such that $\lim_{k \rightarrow +\infty} \tau_k  = +\infty$, then
  \[
  \limsup_{n \rightarrow +\infty} \limsup_{k \rightarrow +\infty} \rho_{\theta_n, \tau_k} [\eta] =  \eta.\]
    Let $\varepsilon >0 $. For each $k \in \N$, we define: 
    \begin{equation}\label{Defn.epsilon_stopping_time}
    A_k^{\varepsilon}:=\{\tau \in \Tk : V(\tau) < \xi(\tau)+\varepsilon \,\,\, a.s. \}
   \,\,\,\, \text{and} \,\,\,\,
    \nu_k^{\varepsilon}:= \essinf A_k^{\varepsilon}.
   \end{equation}
    Then, $\nu_k^{\varepsilon}$ is a.s. finite and 
    \[
 V(\theta_k) \leq  \rho_{\theta_k, \nu_k^{\varepsilon}} [\xi(\nu_k^{\varepsilon})]+C\varepsilon,
    \]
    where $C>0$ is a positive constant (depending on $\rho$ only).
    \item
    Let us define: 
    \begin{equation}\label{Defn.stopping_time}
    A_k:=\{\tau \in \Tk : V(\tau) = \xi(\tau) \,\,\, a.s. \}
   \,\,\,\, \text{and} \,\,\,\,
    \nu_k:= \essinf A_k,
   \end{equation}
    with the convention that $\essinf A_k = + \infty$ if $A_k=\emptyset$.
   It holds: \\ $\nu_k$ is optimal for the problem with value $V(\theta_k)$, if and only if, $\nu_k < +\infty$ a.s.
\end{enumerate}
\end{theorem}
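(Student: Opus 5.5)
The proof runs through the classical Snell-envelope scheme, adapted to the nonlinear setting. Throughout, $V$ is admissible, non-negative and dominates $\xi$ (Lemma \ref{Lemma_admissible}, Remark \ref{TEXRemark33}), it is a $(\Theta,\rho)$-supermartingale (Theorem \ref{Theorem_supermartingale_Snellenvelope}), and it satisfies the DPP (Theorem \ref{Theorem_DPP_infinite_horizon}).

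\textbf{Step 1: explicit form of $\nu_k^\varepsilon$ and $\nu_k$.} The essential infimum $\nu_k^\varepsilon$ coincides with the first hitting time
\[
\bar\nu := \inf\{\theta_n,\ n\ge k : V(\theta_n)<\xi(\theta_n)+\varepsilon\},
\]
and similarly for $\nu_k$.
\begin{itemize}
\item[(a)] The set $A_k^\varepsilon$ is stable under pairwise minimization. This follows from $\Theta$ being closed under $\wedge$ together with admissibility of $V$ and $\xi$ (Remark \ref{Rmk_consequence_admissible}).
\item[(b)] Each element of $A_k^\varepsilon$ is $\ge \bar\nu$.
\item[(c)] On $\{\bar\nu<\infty\}$, the truncations $\bar\nu\wedge\theta_N$, corrected to $\theta_N$ where needed, show that $\bar\nu$ is attained as a decreasing limit. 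Remark \ref{Remark_set_Theta} then places $\bar\nu$ in $\Theta$ whenever it is finite.
\end{itemize}

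\textbf{Step 2: finiteness of $\nu_k^\varepsilon$.} Argue on $B:=\{\nu_k^\varepsilon=+\infty\}$, where $V(\theta_n)\ge\xi(\theta_n)+\varepsilon$ for all $n\ge k$. On this set the DPP gives $V(\theta_n)=\rho_{\theta_n,\theta_{n+1}}[V(\theta_{n+1})]$. Iterating with consistency (v) yields, on $B$,
\[
V(\theta_n)=\rho_{\theta_n,\theta_m}[V(\theta_m)], \qquad m\ge n.
\]
This iteration needs care: one uses the stopped times $\nu_k^\varepsilon\wedge\theta_m$ and the zero-one law (vi) to localize to $B$.

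Next, replace $V(\theta_m)$ by something controlled by $\xi$. The idea is to prove
\[
\limsup_m V(\theta_m)\le \limsup_m \xi(\theta_m) =:\eta \quad\text{a.s.}
\]
For this, bound $\rho_{\theta_m,\tau}[\xi(\tau)]$ for $\tau\ge\theta_m$ by $\rho_{\theta_m,\tau}[\sup_{j\ge m}\xi(\theta_j)]$ via monotonicity, and then apply the hypothesis with $\eta_m=\sup_{j\ge m}\xi(\theta_j)\downarrow\eta$. The hypothesis $\limsup\limsup\rho_{\theta_n,\tau_k}[\eta]=\eta$ is exactly what identifies the limit; the fact that $\eta$ is $\cf_\infty$-measurable is used here. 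Given this bound, on $B$ we get $\limsup V(\theta_m)\ge\limsup\xi(\theta_m)+\varepsilon$ with $\eta<\infty$, a contradiction unless $P(B)=0$. \textbf{This is the main obstacle}: interchanging the limsup with $\rho$ and the supremum over $\tau$ requires the assumption in its full double-limsup form. If the direct route fails, first try reducing to $\eta_m$ bounded by truncation, using Fatou (vii).

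\textbf{Step 3: $\varepsilon$-optimality.} Show that $(V(\tau\wedge\nu_k^\varepsilon))$ is a $(\Theta,\rho)$-martingale from $\theta_k$, i.e.
\[
V(\theta_k)=\rho_{\theta_k,\nu_k^\varepsilon\wedge\theta_m}\big[V(\nu_k^\varepsilon\wedge\theta_m)\big].
\]
This comes from the DPP, since strict continuation holds before $\nu_k^\varepsilon$, combined with consistency and (vi), by induction on $m$. Let $m\to\infty$: because $\nu_k^\varepsilon$ is finite, $\nu_k^\varepsilon\wedge\theta_m=\nu_k^\varepsilon$ eventually, and Remark \ref{Rmk_on_admissibility} together with (vi) gives $V(\theta_k)=\rho_{\theta_k,\nu_k^\varepsilon}[V(\nu_k^\varepsilon)]$. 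Finally $V(\nu_k^\varepsilon)\le\xi(\nu_k^\varepsilon)+\varepsilon$, so Assumption \ref{Assumption_rho_C} yields the bound $\rho_{\theta_k,\nu_k^\varepsilon}[\xi(\nu_k^\varepsilon)]+C\varepsilon$.

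\textbf{Part 2.} The forward direction is immediate: an optimal time must lie in $\Theta$, hence be a.s. finite. For the converse, assume $\nu_k<\infty$ a.s. The martingale argument of Step 3 applies verbatim with $\varepsilon=0$, since it needs only finiteness and no limsup hypothesis. Combined with $V(\nu_k)=\xi(\nu_k)$ (admissibility on each $\{\nu_k=\theta_n\}$), this gives $V(\theta_k)=\rho_{\theta_k,\nu_k}[\xi(\nu_k)]$, so $\nu_k$ is optimal.
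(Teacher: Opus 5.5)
Your Step 3 has a genuine gap. Since your Part 2 reuses it verbatim, the converse direction of Statement 2 is affected too.

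For each fixed $m\ge k$, the identity $V(\theta_k)=\rho_{\theta_k,\nu\wedge\theta_m}[V(\nu\wedge\theta_m)]$ is fine, with $\nu=\nu_k^\varepsilon$ or $\nu=\nu_k$. The passage $m\to\infty$ is not justified by what you cite:
\begin{itemize}
\item Remark~\ref{Rmk_on_admissibility} only says that the \emph{arguments} $V(\nu\wedge\theta_m)$ eventually coincide pathwise with $V(\nu)$. It says nothing about the $\cf_{\theta_k}$-measurable evaluations $\rho_{\theta_k,\nu\wedge\theta_m}[\cdot]$, which are non-local in $\omega$.
\item Property (vi) lets you replace $\nu\wedge\theta_m$ by $\nu$ only on a set $A\in\cf_{\theta_k}$ on which the two times agree. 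In general $\{\nu\le\theta_m\}$ is not $\cf_{\theta_k}$-measurable.
\item Lower-semicontinuity properties such as (vii) point the wrong way. They could at most give $\rho_{\theta_k,\nu}[V(\nu)]\le V(\theta_k)$, which you already have from the supermartingale property.
\end{itemize}

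The inequality you need, $V(\theta_k)\le\rho_{\theta_k,\nu}[V(\nu)]$, is a reverse-Fatou (uniform integrability) statement. It is false under (i)--(vii) and Assumption~\ref{Assumption_rho_C} alone. Take $\theta_n=n$ and $\rho_{S,\tau}=\E[\cdot|\cf_S]$. Let $\cf_t:=\sigma(\epsilon_i:\ i\le t)$ (completed) be generated by i.i.d.\ fair bits $\epsilon_i\in\{0,1\}$. Set $X_n:=2^n\I_{\{\epsilon_1=\cdots=\epsilon_n=1\}}$ and $\xi(n):=\frac{n}{n+1}X_n$.
\begin{itemize}
\item Then $V(n)=X_n$. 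The upper bound comes from conditional Fatou and optional stopping for the non-negative martingale $X$; the lower bound from $\tau=N\to\infty$.
\item Also $V(n)-\xi(n)=X_n/(n+1)\ge1$ on $\{X_n>0\}$. Hence $\nu_0$, and $\nu_0^\varepsilon$ for every $\varepsilon<1$, equal the first time a bit equals $0$, which is a.s.\ finite.
\item $V(\cdot\wedge\nu_0)=X$ is a martingale, yet $\E[V(\nu_0)]=\E[\xi(\nu_0)]=0<1=V(0)$.
\end{itemize}
So $\nu_0$ is finite but not optimal, and your identity $V(0)=\E[V(\nu_0^\varepsilon)]$ fails for every $\varepsilon<1$.

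What is missing is the left-upper-semicontinuity hypothesis on $\rho$, Assumption~\ref{Assumption_LUSC_rho}, which the example above violates. The paper brings it into its proof through Lemma~\ref{Lemma_V_martingale_on_stochastic_interval}. Along $\nu\wedge\theta_m\uparrow\nu$ it yields
\[
V(\theta_k)=\limsup_{m\to\infty}\rho_{\theta_k,\nu\wedge\theta_m}\big[V(\nu\wedge\theta_m)\big]\le\rho_{\theta_k,\nu}\Big[\limsup_{m\to\infty}V(\nu\wedge\theta_m)\Big]=\rho_{\theta_k,\nu}[V(\nu)].
\]
The inner $\limsup$ is identified by exactly your eventual-equality argument; this is the content of Assumption~\ref{Assumption_LUSC_V}, which is automatic in this framework. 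The converse inequality is the supermartingale property.

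Once you add Assumption~\ref{Assumption_LUSC_rho}, the rest of your plan is essentially the paper's, with two corrections:
\begin{itemize}
\item Step 2 is the paper's Lemma~\ref{Lemma_assumption_on_rho}. The hypothesis concerns a fixed $\eta$, not a moving $\eta_m$. So fix $m$, apply it to $\eta^{(m)}=\sup_{p\ge m}\xi(\theta_p)$ to get $\limsupn V(\theta_n)\le\eta^{(m)}$, and only then let $m\to\infty$.
\item The DPP iteration on $\{\nu_k^\varepsilon=+\infty\}$ is unnecessary. On that event, $V(\theta_n)\ge\xi(\theta_n)+\varepsilon$ for all $n\ge k$ already contradicts $\limsupn V(\theta_n)=\limsupn\xi(\theta_n)<+\infty$.
\end{itemize}
Identifying $\nu_k^\varepsilon$ with the first grid time at which $V<\xi+\varepsilon$ is a clean way to make that event precise.
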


The proof of Theorem \ref{Theorem_optimal_stopping_times} passes through several intermediary results. 
\begin{Remark}\label{Remark_RC}
    Any admissible family $(\phi(\tau),\tau \in \Theta)$ in our infinite horizon framework is \textbf{right-continuous along Bermudan stopping times}, that is, for all $\tau \in \Theta$, and for all non-increasing sequences of Bermudan stopping times $(\tau_n)$ $\in$ $\Theta$ such that $\tau_n \downarrow \tau$, it holds $\limn \phi(\tau_n)=\phi(\tau)$. 
\\ Indeed, let $\tau \in \Theta$, and let $(\tau_n) \in \Theta^{\N}$ be such that $\tau_n \downarrow \tau$. For each $n$, we have $\tau_n = \sum_{l=0}^{+\infty} \theta_l \I_{A_l^{(n)}} $ and $\tau = \sum_{l=0}^{+\infty} \theta_l \I_{A_l}$ (cf. the canonical writing from Remark \ref{Rk_canonical}).
\\ Let $\omega \in \Omega$, there exists a unique $l_0=l_0(\omega)$ such that %$\omega \in A_{l_0}$ , and 
$\tau(\omega)=\theta_{l_0}(\omega)$. Then, as $\tau_n(\omega) \downarrow \tau (\omega)$ and as $\theta_k(\omega) \uparrow +\infty$, after a certain rank $n_0=n_0(\omega)$, $\tau_n(\omega) = \tau (\omega)=\theta_{l_0} (\omega)$. 
Hence, there exists $n_0=n_0 (\omega)$ such that for all $n \geq n_0$, $\tau_n(\omega) = \tau (\omega)$, and, 
hence, by Remark \ref{Rmk_on_admissibility}, for all $n \geq n_0$, $\phi(\tau_n)(\omega) = \phi(\tau) (\omega)$. We conclude that  $\limn \phi(\tau_n)(\omega)=\phi(\tau)(\omega)$.
\end{Remark}

\begin{Remark}\label{Remark_v_k_finite}
    If $\nu_k < +\infty$ a.s., then $V(\nu_k)=\xi(\nu_k)$.
\end{Remark}

\begin{proposition}[Optimality criterion]\label{prop_optimality_criterion}
     Let $\nu^*$ be a \textbf{finite} stopping time. The finite stopping time $\nu^*$ is optimal for the problem \eqref{optimal_stopping_problem_k} with value $V(\theta_k)$ if the following three conditions hold:
      \begin{itemize}
      \item[i)] $\nu^* \in \Tk$.
          \item [ii)] $V(\nu^*)=\xi(\nu^*)$ a.s.
          \item[iii)] The value family $V$ is a \emph{$(\Theta, \rho)$}-martingale on $[\theta_k, \nu^*]$. 
      \end{itemize}
\end{proposition}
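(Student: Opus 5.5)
The plan is to compare the two inequalities that hold for every element of $\Tk$ and show that they collapse to equalities at $\nu^*$. By definition of the value, for any $\tau \in \Tk$ we have $\rho_{\theta_k,\tau}[\xi(\tau)] \leq V(\theta_k)$. Condition i) says $\nu^* \in \Tk$, so this gives the upper bound $\rho_{\theta_k,\nu^*}[\xi(\nu^*)] \leq V(\theta_k)$. What remains is the reverse inequality, and for that we use conditions ii) and iii).

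For the reverse inequality, we first interpret condition iii). We understand it to mean that for all $\sigma,\tau \in \Theta$ with $\theta_k \leq \sigma \leq \tau \leq \nu^*$ a.s., we have $\rho_{\sigma,\tau}[V(\tau)] = V(\sigma)$. Applying this with $\sigma = \theta_k$ and $\tau = \nu^*$ is legitimate, since both lie in $\Theta$ and $\theta_k \leq \nu^*$ by i). This yields
\[
V(\theta_k) = \rho_{\theta_k,\nu^*}[V(\nu^*)].
\]
Condition ii) then gives $V(\nu^*) = \xi(\nu^*)$ a.s. Since $\rho_{\theta_k,\nu^*}$ is a map on random variables and therefore respects a.s. equality (alternatively, apply monotonicity (iv) in both directions), we obtain $\rho_{\theta_k,\nu^*}[V(\nu^*)] = \rho_{\theta_k,\nu^*}[\xi(\nu^*)]$. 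Combining this with the upper bound gives $V(\theta_k) = \rho_{\theta_k,\nu^*}[\xi(\nu^*)]$, which is exactly the optimality of $\nu^*$.

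The only delicate point is making sure the objects are well-defined. Finiteness of $\nu^*$, together with i), guarantees that $\nu^*$ is a genuine Bermudan stopping time. Hence $V(\nu^*)$ and $\xi(\nu^*)$ are defined through the admissible families, for example via the representation in Remark \ref{Rmk_consequence_admissible}, and $\rho_{\theta_k,\nu^*}$ acts on $L^0_{+}(\cf_{\nu^*})$. I expect no real obstacle beyond stating precisely that the martingale property on $[\theta_k,\nu^*]$ is applied between the endpoints $\theta_k$ and $\nu^*$. This is also the reason the statement requires $\nu^*$ to be finite: in the infinite horizon setting, $\Theta$ contains only a.s. finite stopping times.
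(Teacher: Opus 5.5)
Your proof is correct and follows the paper's argument. The paper likewise applies the martingale property between $\theta_k$ and $\nu^*$ to get $V(\theta_k)=\rho_{\theta_k,\nu^*}[V(\nu^*)]$; it phrases condition iii) via the stopped family $(V(\nu\wedge\nu^*))_{\nu\in\Tk}$ and takes $\nu=\nu^*$. It then substitutes $V(\nu^*)=\xi(\nu^*)$. Your separate upper bound $\rho_{\theta_k,\nu^*}[\xi(\nu^*)]\le V(\theta_k)$ is harmless but not needed, since the chain of equalities already shows that the supremum is attained at $\nu^*$.
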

\begin{proof}
    The proof follows the same arguments as the proof of Lemma 2.23, Statement 1, in \cite{Grigorova-5}, and is omitted.
\end{proof}
\begin{Assumption}\label{Assumption_LUSC_V}
    We assume that $V$ is left-upper-semicontinuous (LUSC) along sequences of Bermudan stopping times (which are almost surely finite), that is, if $(\tau_k)$ $\in$ $\Theta^{\N}$ and $\tau$ $\in$ $\Theta$ are such that  $\tau_k$ $\uparrow$ $\tau$, then 
    \begin{equation}\label{Eq_LUSC_V}
         \limsup_{k \rightarrow +\infty} V(\tau_k) \leq V(\tau).
    \end{equation}
\end{Assumption}

\begin{Assumption}\label{Assumption_LUSC_rho}
 We assume that $\rho$ is left-upper-semicontinuous (LUSC) along sequences of Bermudan stopping times (which are almost surely finite), that is, 
    \begin{equation}\label{Eq_LUSC_rho}
         \limsupn \rho_{\theta_k, \tau_n}[\phi(\tau_n)] \leq \rho_{\theta_k, \tau}[\limsupn \phi(\tau_n)].
    \end{equation}
    for each non-decreasing sequence $(\tau_n)$ $\in$ $\Theta_{\theta_k}^{\N}$ and for each $\tau$ $\in$ $\Theta_{\theta_k}$ such that  $\tau_n$ $\uparrow$ $\tau$.
\end{Assumption}

\begin{lemma}\label{Lemma_V_martingale_on_stochastic_interval}
     Let $\nu^{\star}$ be a \textbf{(finite)} stopping time in $\Theta_{\theta_k}$ such that $\nu^{\star} \leq \nu_k$. 
    Then, the value family $V$ is a \emph{$(\Theta, \rho)$}-martingale on $[\theta_k,\nu^{\star}]$.
\end{lemma}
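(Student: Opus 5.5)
The plan is to prove, for all $\sigma,\tau\in\Theta$ with $\theta_k\le\sigma\le\tau\le\nu^\star$, that $\rho_{\sigma,\tau}[V(\tau)]=V(\sigma)$. By Theorem \ref{Theorem_supermartingale_Snellenvelope} (statement 1), $V$ is already a $(\Theta,\rho)$-supermartingale, so $\rho_{\sigma,\tau}[V(\tau)]\le V(\sigma)$. Only the reverse inequality needs an argument. I would first reduce to one-step transitions. On each grid interval $[\theta_j,\theta_{j+1}]$ intersected with $[\theta_k,\nu^\star]$, I would show
\[
\rho_{\theta_j,\theta_{j+1}}[V(\theta_{j+1})]=V(\theta_j)\quad\text{on }\{\theta_j<\nu^\star\}.
\]
This comes from the DPP (Theorem \ref{Theorem_DPP_infinite_horizon}): $V(\theta_j)=\xi(\theta_j)\vee\rho_{\theta_j,\theta_{j+1}}[V(\theta_{j+1})]$. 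On $\{\theta_j<\nu^\star\}\subset\{\theta_j<\nu_k\}$ we have $V(\theta_j)>\xi(\theta_j)$, so the maximum is attained by the second term. The claim $V(\theta_j)>\xi(\theta_j)$ on $\{\theta_j<\nu_k\}$ is the key point. I would argue it by contradiction. Suppose $B:=\{V(\theta_j)=\xi(\theta_j)\}\cap\{\theta_j<\nu_k\}$ has positive probability. Take any $\tau\in A_k$; this requires $A_k\neq\emptyset$, and otherwise the convention $\nu_k=+\infty$ makes $\{\theta_j\ge\theta_k\}$ the relevant set, handled by the same argument with $\tau=\theta_j$. Define $\tau':=\theta_j\I_{B}+\tau\I_{B^c}$, noting that $B\in\cf_{\theta_j}$ and $\theta_j\ge\theta_k$ on the relevant set. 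By admissibility, $V(\tau')=\xi(\tau')$, so $\tau'\in A_k$. Hence $\nu_k\le\tau'=\theta_j$ on $B$, which is a contradiction. More carefully, I would use stability of $A_k$ under pairwise minimization (Remark \ref{Remark_set_Theta} together with the admissibility of $V$ and $\xi$), so that the essential infimum is realised as a decreasing limit. Right-continuity (Remark \ref{Remark_RC}) then gives that $\nu_k\in A_k$ when it is finite, consistent with Remark \ref{Remark_v_k_finite}.

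Next I would lift the one-step identities to Bermudan times. Since $\nu^\star$ is finite and in $\Theta$, I write $\nu^\star=\sum_l\theta_l\I_{\{\nu^\star=\theta_l\}}$. I first show $\rho_{\theta_j,\nu^\star\vee\theta_j}[V(\nu^\star\vee\theta_j)]=V(\theta_j)$ by backward induction over the truncations $\nu^\star\wedge\theta_n$. For fixed $n$, consistency (v) composes the one-step martingale identities. On $\{\nu^\star\le\theta_{i}\}\in\cf_{\theta_i}$, the step from $\theta_i$ to $\theta_{i+1}$ is trivial by knowledge preservation (iii) and the zero-one law (vi). This gives $V(\theta_j)=\rho_{\theta_j,\nu^\star\wedge\theta_n}[V(\nu^\star\wedge\theta_n)]$ for every $n\ge j$. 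For general $\sigma\le\tau$ I then use the same localisation. On $\{\sigma=\theta_j\}$, admissibility (ii) and (vi) reduce $\rho_{\sigma,\tau}$ to $\rho_{\theta_j,\cdot}$. The identity for $\tau$ in place of $\nu^\star$ follows by the same induction, since $\tau\le\nu^\star$ means that on $\{\theta_i<\tau\}$ we are also in $\{\theta_i<\nu^\star\}$.

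The main obstacle is passing $n\to\infty$, because $\nu^\star$ is finite but unbounded. Since $\nu^\star\wedge\theta_n=\nu^\star$ on $\{\nu^\star\le\theta_n\}$, which increases to $\Omega$, I would use the zero-one law on the $\cf_{\theta_j}$-measurable part, and on the remainder use Remark \ref{Rmk_on_admissibility} for $V(\nu^\star\wedge\theta_n)\to V(\nu^\star)$ eventually. To pass $\rho$ through the limit, I would invoke Assumption \ref{Assumption_LUSC_rho} along $\tau_n=\nu^\star\wedge\theta_n\uparrow\nu^\star$:
\[
V(\theta_j)=\limsup_n\rho_{\theta_j,\tau_n}[V(\tau_n)]\le\rho_{\theta_j,\nu^\star}[\limsup_n V(\tau_n)]=\rho_{\theta_j,\nu^\star}[V(\nu^\star)].
\]
Combined with the supermartingale inequality, this yields equality. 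I expect this limiting step to be the delicate point, and it is exactly where Assumption \ref{Assumption_LUSC_rho} (or alternatively Assumption \ref{Assumption_LUSC_V}) is needed.
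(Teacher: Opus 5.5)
Your route is the paper's. The DPP gives one-step identities on $\{\theta_j<\nu^\star\}$, consistency chains them along $(\theta_n\wedge\nu^\star)_n$, Assumption \ref{Assumption_LUSC_rho} passes to the limit, and the supermartingale property gives the reverse inequality.

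Two of your local choices are sound:
\begin{itemize}
\item You prove $V(\theta_j)>\xi(\theta_j)$ on $\{\theta_j<\nu_k\}$ directly by concatenation instead of importing Lemmas 2.25--2.26 of \cite{Grigorova-5}. Note that $\tau'\in\Theta$ because $B\subset\{\theta_j<\nu_k\}\subset\{\theta_j<\tau\}$, which gives $B\in\cf_{\theta_j\wedge\tau}$.
\item You replace Assumption \ref{Assumption_LUSC_V} by Remark \ref{Rmk_on_admissibility}. This is legitimate, since $\theta_n\wedge\nu^\star$ is pointwise eventually equal to $\nu^\star$.
\end{itemize}
Write the chained identities with first index $\theta_j\wedge\nu^\star$, as the paper does; $\rho_{\theta_j,\nu^\star\wedge\theta_n}$ is not defined on $\{\nu^\star<\theta_j\}$. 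For general $\sigma\le\tau$ you need not redo the induction. Once $V(\sigma\wedge\nu^\star)=\rho_{\sigma\wedge\nu^\star,\nu^\star}[V(\nu^\star)]$ holds for all $\sigma$, Lemma \ref{Lemma_phi_martingale_on_stochastic_interval} finishes by consistency.

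The genuine gap is the case $A_k=\emptyset$. Your fix ``the same argument with $\tau=\theta_j$'' produces $\tau'=\theta_j$. This is not in $A_k$, since $V(\theta_j)=\xi(\theta_j)$ fails off $B$; that is exactly why you needed an element of $A_k$ on $B^c$. No other argument can repair this, because in that case the key claim is false, and so is the lemma as literally stated with the convention $\essinf\emptyset=+\infty$.

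Here is a counterexample. Take $\Omega=\{\omega_1,\omega_2\}$ with equal weights, $\cf_t$ trivial for $t<1$ and $\cf_t=2^\Omega$ for $t\ge 1$, $\theta_n=n$, and $\rho_{S,\tau}=\E[\cdot|\cf_S]$. Set $\xi(0)=0$ and, for $n\ge 1$, $\xi(n)(\omega_1)=\I_{\{n=1\}}$ and $\xi(n)(\omega_2)=1-1/n$.
\begin{itemize}
\item For $n\ge1$, $V(n)=\sup_{m\ge n}\xi(m)$ pointwise. Hence $V(\tau)(\omega_2)=1>\xi(\tau)(\omega_2)$ for every $\tau\in\Theta_{\theta_1}$.
\item Therefore $A_1=\emptyset$, $\nu_1=+\infty$ and $\{\theta_1<\nu_1\}=\Omega$. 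Yet $V(1)(\omega_1)=\xi(1)(\omega_1)=1$.
\item With $\nu^\star=2$ we get $\E[V(2)|\cf_1](\omega_1)=0\neq 1=V(1)(\omega_1)$.
\end{itemize}
All standing assumptions hold here, including both LUSC assumptions, which are trivial on a finite space. The conditions of Theorem \ref{Theorem_optimal_stopping_times}(1) hold as well.

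So you must either assume $A_k\neq\emptyset$ (equivalently $\nu_k<+\infty$ a.s.), in which case your argument works. Or you must replace $\nu_k$ by the first grid hitting time $\inf\{\theta_l\ge\theta_k: V(\theta_l)=\xi(\theta_l)\}$. For that time the key property holds by definition, and the rest of your argument goes through verbatim. The paper's proof glosses over the same point when it transfers Lemmas 2.25--2.26 of \cite{Grigorova-5} to the infinite horizon. You should state the restriction explicitly rather than claim the empty case is covered.
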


\begin{corollary}\label{corollary_V_martingale_on_interval}
As a consequence of Lemma \ref{Lemma_V_martingale_on_stochastic_interval}, we have that: \\ If $\nu_k$  is a.s. finite, then 
   the value family $V$ is a \emph{$(\Theta, \rho)$}-martingale
    on $[\theta_k,\nu_k]$. 
\end{corollary}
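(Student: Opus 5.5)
The corollary follows by applying Lemma \ref{Lemma_V_martingale_on_stochastic_interval} with the specific choice $\nu^\star:=\nu_k$. So the plan is to check that $\nu_k$ meets the lemma's hypotheses whenever $\nu_k<+\infty$ a.s. Three things must hold: $\nu_k$ is a (finite) stopping time, $\nu_k\in\Theta_{\theta_k}$, and $\nu_k\le\nu_k$. The last is trivial, and finiteness is the assumption of the corollary.

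The main step is to show $\nu_k\in\Theta_{\theta_k}$. When $\nu_k<+\infty$ a.s., the set $A_k$ is non-empty, since otherwise $\nu_k=+\infty$ by convention. Next we show that $A_k$ is closed under pairwise minimization. Let $\tau,\tau'\in A_k$. Then $\tau\wedge\tau'\in\Theta_{\theta_k}$ by Remark \ref{Remark_set_Theta}. Using the admissibility of $V$ (Lemma \ref{Lemma_admissible}) and of $\xi$, on $\{\tau\le\tau'\}$ we have $V(\tau\wedge\tau')=V(\tau)=\xi(\tau)=\xi(\tau\wedge\tau')$, and symmetrically on the complement. Hence $\tau\wedge\tau'\in A_k$.

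By the standard property of the essential infimum of a family stable under pairwise minimization, there is a non-increasing sequence $(\tau_n)\in A_k^{\N}$ with $\tau_n\downarrow\nu_k$. Remark \ref{Remark_set_Theta} states that limits of non-increasing sequences in $\Theta$ stay in $\Theta$, so $\nu_k\in\Theta$. Since each $\tau_n\ge\theta_k$, we also get $\nu_k\ge\theta_k$, hence $\nu_k\in\Theta_{\theta_k}$. In particular $\nu_k$ is a stopping time.

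Applying Lemma \ref{Lemma_V_martingale_on_stochastic_interval} with $\nu^\star=\nu_k$ then gives that $V$ is a $(\Theta,\rho)$-martingale on $[\theta_k,\nu_k]$. The only real content is showing $\nu_k\in\Theta_{\theta_k}$, that is, the closure of $A_k$ under minimization combined with the closure of $\Theta$ under decreasing limits. Both follow from results already stated, so I expect no serious obstacle.
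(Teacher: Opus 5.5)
Your proposal is correct and follows the paper's route: the paper obtains the corollary directly from Lemma \ref{Lemma_V_martingale_on_stochastic_interval} with $\nu^\star=\nu_k$. The paper leaves implicit the check that $\nu_k\in\Theta_{\theta_k}$, and your verification (non-emptiness of $A_k$, stability under pairwise minimization, closure of $\Theta$ under non-increasing limits) is the same argument it gives for $\nu_k^{\varepsilon}$ in the proof of Theorem \ref{Theorem_optimal_stopping_times}.
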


\begin{proof}[Proof of Lemma \ref{Lemma_V_martingale_on_stochastic_interval}]
The proof follows the same type of arguments as the ones presented in \cite{Grigorova-5}. 
\\ As $V$ satisfies the DPP in the case of infinite horizon (cf. Theorem \ref{Theorem_DPP_infinite_horizon}), we apply Lemma 2.25 and Lemma 2.26 in \cite{Grigorova-5} with $\phi=V$ and with $\nu^*$ instead of $\tilde{\nu}_k$, which we can do as $\nu^*$ is finite a.s. and as $\nu^* \leq \nu_k$. 
For each $l \in \N$,
\[
V(\theta_l \wedge \nu^*)=\rho_{\theta_l \wedge \nu^*,\theta_{l+1} \wedge \nu^*}[V(\theta_{l+1} \wedge \nu^*)].
\]
Similarly, by following the reasoning by induction from Step 1 of the proof of Theorem 2.19 in \cite{Grigorova-5} (applied with $\nu^*$ instead of $\nu_k$), we get: for each $n \in \N$,
\[
\rho_{\theta_l \wedge \nu^*,\theta_{l+n} \wedge \nu^*}[V(\theta_{l+n} \wedge \nu^*)] =
V(\theta_l \wedge \nu^*).
\]
Hence, the sequence of random variables $(\rho_{\theta_{l} \wedge \nu^*,\theta_{l+n} \wedge \nu^*}[V(\theta_{l+n} \wedge \nu^*)])_{n \in \N}$ does not depend on $n$ and is constantly equal to the random variable $V(\theta_l \wedge \nu^*)$.
\\
As in \cite{Grigorova-5}, we use that $V$ is LUSC along the sequence $(\theta_{l+n} \wedge \nu^*)_{n \in \N}$ (by Assumption \ref{Assumption_LUSC_V}), and that $\rho$ is LUSC along the sequence $(\theta_{l+n} \wedge \nu^*)_{n \in \N}$ on $\rho$  (by Assumption \ref{Assumption_LUSC_rho}), and the monotonicity of $\rho$, 
to get:
\[
V(\theta_l \wedge \nu^*) \leq
\rho_{\theta_l \wedge \nu^*, \nu^*}[V(\nu^*)]. 
\]
The converse inequality follows from the \emph{$(\Theta, \rho)$}-supermartingale property of $V$ (cf. Theorem \ref{Theorem_supermartingale_Snellenvelope}). 
Hence, we have the equality:
\begin{equation}\label{eq_25}
V(\theta_l \wedge \nu^*) =
\rho_{\theta_l \wedge \nu^*, \nu^*}[V(\nu^*)]. 
\end{equation}
Let $\sigma \in \Theta_{\theta_k}$. 
Then $\sigma$ is of the form $\sigma=\sum_{l \geq k} \theta_l \I_{A_l}$, where $(A_l)_{l \in \N}$ form a partition of $\Omega$, and $A_l$ is $\cf_{\theta_l}$-measurable, for each $l$.
Hence, 
\[
V(\sigma \wedge \nu^*) =
\sum_{l \geq k} V(\theta_l \wedge \nu^*) \I_{A_l} =
\sum_{l \geq k} \rho_{\theta_l \wedge \nu^*, \nu^*}[V(\nu^*)] \, \I_{A_l}
= \rho_{\sigma \wedge \nu^*, \nu^*}[V(\nu^*)],
\]
where we have used \eqref{eq_25} and the admissibility of $V$. 
\\ Hence, $V$ is a \emph{$(\Theta, \rho)$}-martingale on $[\theta_k,\nu^*]$ by Lemma \ref{Lemma_phi_martingale_on_stochastic_interval} in the Appendix.
\end{proof}
A version of the following lemma has been proved in \cite{Grigorova-5} (cf. Lemma 2.28) in the context of problems with finite time horizon.
\begin{lemma} \label{Lemma_assumption_on_rho}
  Let $\rho$ satisfy the properties of knowledge preservation (iii), monotonicity (iv) and consistency (v). 
  \\ If $\rho$ does not depend on the second index 
   (that is, $\rho_{S,\tau}[\cdot]=\rho_{S}[\cdot]$), 
  we \textbf{assume} that: if $\eta \in L^+_0(\cf_{\infty})$, then
  \begin{equation} \label{eq_8}
   \limsup_{n \rightarrow +\infty} \rho_{\theta_n} [\eta] = \eta.
  \end{equation}
   If $\rho$ depends on both indices, 
  we \textbf{assume} that: if $\eta \in L^+_0(\cf_{\infty})$, if 
  $(\tau_k)$ $\in \Theta^{\mathbb{N}}$ non-decreasing such that $\lim_{k \rightarrow +\infty} \tau_k  = +\infty$, then
  \begin{equation} \label{eq_9}
  \limsup_{n \rightarrow +\infty} \limsup_{k \rightarrow +\infty} \rho_{\theta_n, \tau_k} [\eta] =  \eta.
  \end{equation}
When these conditions on $\rho$ are satisfied, we have:
   \begin{equation} \label{eq_limsupV}
      \limsupn V(\theta_n) = \limsupn \xi(\theta_n).
  \end{equation}
\end{lemma}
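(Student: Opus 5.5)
One inequality is immediate. By Remark \ref{TEXRemark33}, $V(\theta_n)\geq \xi(\theta_n)$ for every $n$, so $\limsupn V(\theta_n)\geq \limsupn \xi(\theta_n)$. All the work is in the reverse inequality. I would set $\eta:=\limsupn \xi(\theta_n)$, which lies in $L^0_+(\cf_\infty)$ because each $\xi(\theta_n)$ is $\cf_{\theta_n}\subset\cf_\infty$-measurable. I would treat the case where $\rho$ depends on both indices; the one-index case is the same argument with \eqref{eq_8} replacing \eqref{eq_9}.

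Fix $n$ and $m\geq n$, and introduce the tail variable $Z_m:=\sup_{j\geq m}\xi(\theta_j)$, which is $\cf_\infty$-measurable. Take any $\tau\in\Theta_{\theta_m}$ and write $\tau=\sum_{j\geq m}\theta_j\I_{A_j}$. Admissibility gives $\xi(\tau)\leq Z_m$.

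The key difficulty is that $Z_m$ is not $\cf_\tau$-measurable, so I cannot simply apply monotonicity to write $\rho_{\theta_n,\tau}[\xi(\tau)]\leq\rho_{\theta_n,\tau}[Z_m]$. To get around this, I would use a sequence $(\tau_k)$ with $\tau_k\uparrow+\infty$ together with consistency. Take $\tau_k:=\tau\vee\theta_k$, which is in $\Theta$ and non-decreasing to $+\infty$. For $k\geq m$, set $Z_m^{(k)}:=\sup_{m\leq j\leq k}\xi(\theta_j)$; it is $\cf_{\tau_k}$-measurable and satisfies $\xi(\tau)\leq Z_m^{(k)}$ on $\{\tau\leq\theta_k\}$. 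Then I would aim for
\[
\rho_{\theta_n,\tau}[\xi(\tau)]=\rho_{\theta_n,\tau_k}\big[\rho_{\tau_k,\tau}[\xi(\tau)]\big]\lesssim\rho_{\theta_n,\tau_k}[Z_m].
\]
This step is delicate, because consistency (v) requires $\tau\leq\tau_k$, which holds, but composes in the order $\rho_{S,\theta}[\rho_{\theta,\tau}]$ with $\theta$ in the middle. The identity $\rho_{\tau_k,\tau}$ with $\tau_k\geq\tau$ is instead knowledge preservation (iii), read as $\rho_{\tau_k,\tau}[\xi(\tau)]=\xi(\tau)$. So the honest route is the following. Knowledge preservation (iii) applied to the $\cf_\tau$-measurable variable $\xi(\tau)$ at the later time index gives $\rho_{\theta_n,\tau}[\xi(\tau)]=\rho_{\theta_n,\tau_k}[\xi(\tau)]$; I would obtain this via consistency, $\rho_{\theta_n,\tau_k}[\xi(\tau)]=\rho_{\theta_n,\tau}[\rho_{\tau,\tau_k}[\xi(\tau)]]$, combined with $\rho_{\tau,\tau_k}[\xi(\tau)]=\xi(\tau)$ from (iii), which holds since $\xi(\tau)$ is $\cf_\tau$-measurable. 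Monotonicity then gives $\rho_{\theta_n,\tau_k}[\xi(\tau)]\leq\rho_{\theta_n,\tau_k}[Z_m]$, since $\xi(\tau)\leq Z_m$ and one may view the argument in $L^0_+(\cf_\infty)$ via the assumed extension of $\rho$ to such $\eta$ in \eqref{eq_9}. This yields $\rho_{\theta_n,\tau}[\xi(\tau)]\leq\limsup_k\rho_{\theta_n,\tau_k}[Z_m]$, and I expect justifying this last inequality to be the main obstacle. If $\rho$ cannot be evaluated at the non-$\cf_{\tau_k}$-measurable $Z_m$, I would replace $Z_m$ by $Z_m^{(k)}$ on $\{\tau\leq\theta_k\}$, an event whose probability tends to $1$, and then use the generalized zero-one law and continuity from below (Remark \ref{Remark_CB}) to pass to the limit in $k$.

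With that bound in hand, I would use the DPP/Snell structure to transfer it to $V$. Since $\tau\in\Theta_{\theta_m}$ is arbitrary, taking the supremum gives $\esssup_{\tau\in\Theta_{\theta_m}}\rho_{\theta_n,\tau}[\xi(\tau)]\leq\limsup_k\rho_{\theta_n,\tau_k}[Z_m]$. By consistency and the maximizing sequence lemma (as in Proposition \ref{Prop_1}, iterated), the left-hand side dominates $\rho_{\theta_n,\theta_m}[V(\theta_m)]$, so I would instead work directly at $n=m$. There the left-hand side equals $V(\theta_n)$, which gives
\[
V(\theta_n)\leq\limsup_k\rho_{\theta_n,\tau_k}[Z_{m}]\quad\text{for } m\leq n,
\]
using that $Z_n\leq Z_m$ and monotonicity. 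Taking $\limsup_n$ with $m$ fixed and applying \eqref{eq_9} with $\eta=Z_m$ yields $\limsupn V(\theta_n)\leq Z_m$. Letting $m\to\infty$, $Z_m\downarrow\eta=\limsupn\xi(\theta_n)$, which completes the reverse inequality.
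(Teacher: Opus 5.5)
Your argument is fine in the one-index case, where it reduces to the paper's. The two-index route you actually write out breaks at the step $\rho_{\theta_n,\tau}[\xi(\tau)]=\rho_{\theta_n,\tau_k}[\xi(\tau)]$. Consistency (v) correctly gives $\rho_{\theta_n,\tau_k}[\xi(\tau)]=\rho_{\theta_n,\tau}\bigl[\rho_{\tau,\tau_k}[\xi(\tau)]\bigr]$. You then use $\rho_{\tau,\tau_k}[\xi(\tau)]=\xi(\tau)$ with $\tau\le\tau_k$. But knowledge preservation (iii) only gives $\rho_{\tau_k,\tau}[\xi(\tau)]=\xi(\tau)$, with the evaluation time \emph{after} the second index, as you yourself noted a line earlier. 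The swapped identity does not follow from (iii)--(v), and it is false for genuinely two-index operators.

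For a counterexample, take the trivial filtration, $\theta_n=n$, a deterministic $r>0$ with $\int_0^{\infty}r(t)\,dt<+\infty$, and $\rho_{S,\tau}[\eta]:=\eta\,e^{-\int_S^{S\vee\tau}r(t)\,dt}$. This satisfies (i)--(vii) and \eqref{eq_9}, yet $\rho_{n,k}[1]<1$ for $k>n$. With $\xi\equiv 1$ you get $V(\theta_n)=1=Z_m$, while $\limsup_{k}\rho_{\theta_n,\tau_k}[Z_m]=e^{-\int_n^{\infty}r(t)\,dt}<1$. So your displayed bound $V(\theta_n)\le\limsup_k\rho_{\theta_n,\tau_k}[Z_m]$ fails for every $n$. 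The second index can only be sent to $+\infty$ after taking $\limsup_n$, not at fixed $n$.

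The detour also does not solve the measurability issue you rightly raised. $Z_m$ is not $\cf_{\tau_k}$-measurable either, so you still need $\rho$ to act on $\cf_\infty$-measurable arguments, as \eqref{eq_9} implicitly presupposes. Once that is granted, the paper keeps the second index fixed. For $n\ge m$ and $\tau\in\Theta_{\theta_n}\subset\Theta_{\theta_m}$, monotonicity gives $\rho_{\theta_n,\tau}[\xi(\tau)]\le\rho_{\theta_n,\tau}[Z_m]$, hence $V(\theta_n)\le\esssup_{\tau\in\Theta_{\theta_n}}\rho_{\theta_n,\tau}[Z_m]$. This family is upward directed (by (vi) and concatenation), so the ess sup is the increasing limit along a maximizing sequence. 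The paper then uses \eqref{eq_9} only on the $\limsup_n$ of that limit, giving $\limsup_n V(\theta_n)\le Z_m$, and finally lets $m\to\infty$.

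That maximizing sequence depends on $n$, just as your $\tau\vee\theta_k$ depends on $\tau$, whereas \eqref{eq_9} is stated for one fixed non-decreasing sequence tending to $+\infty$. So the paper's use of \eqref{eq_9} is itself somewhat formal. Unlike your route, though, it never requires an inequality that is false at fixed $n$.
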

\begin{Remark}
  In the case where $\rho_{S,\tau}[\cdot]=\rho_{S}[\cdot]=\E[\cdot|\cf_{S}]$, the condition \\ $\limsupn \E[\eta|\cf_{\theta_n}] = \eta$, for each $\eta \in L^+_0(\cf_{\infty})$, 
   is trivially true. 
   \\
   In the case where $\rho_{S,\tau}[\cdot]$ is the $g$-conditional expectation (on infinite horizon) from 
   Section \ref{Example_1}, we have:  $p=2$, $\eta \in L^2$ and 
   $\rho_{S,\tau}[\cdot]=\rho_S[\cdot]=\ce_S^g[\cdot]$,
   and 
   $\limsupn \ce_{\theta_n}^g[\eta] = \eta$
follows by Remark \ref{remark_limY_t} (cf. Section \ref{Example_1}). 
\end{Remark}

\begin{proof}[Proof of Lemma \ref{Lemma_assumption_on_rho}]
    For each $m \in \N$, for each $\tau \in \Theta_{\theta_m}$, $\xi(\tau) \leq \sup_{p \geq m} \xi(\theta_p)$.
    \\ Indeed, for each $\tau \in \Theta_{\theta_m}$, we have
    \begin{equation}\label{eq_10}
    \xi(\tau)
    =\sum_{l \geq m} \xi(\theta_l) \I_{A_l} 
    \leq  \sup_{p \geq m} \xi(\theta_p).
     \end{equation}
    For convenience, let us set $\eta^{(m)}:=\eta:= \sup_{p \geq m} \xi(\theta_p) $.
    As $\xi$ is a non-negative family, $\eta$ is a non-negative random variable in $L^{0}_{+}$.
   \\ For each $n\geq m$ (as $\Theta_{\theta_n} \subseteq \Theta_{\theta_m}$),
    \[
    V(\theta_n)
   = \esssup_{\tau\in \Theta_{\theta_n}} \rho_{\theta_n,\tau}[\xi(\tau)]
   \leq \esssup_{\tau\in \Theta_{\theta_n}} \rho_{\theta_n,\tau}[\eta],
    \]
    where we have used the monotonicity of $\rho_{\theta_n,\tau}$ and \eqref{eq_10} for the inequality.
    \\ Let us focus on the optimal stopping problem on the RHS of the above inequality. 
    The family $\{\rho_{\theta_n,\tau}[\eta]:\tau \in \Theta_{\theta_n}\}$ is stable by maximization. 
    Indeed, let $\tau_1 \in \Theta_{\theta_n}$ and $\tau_2 \in \Theta_{\theta_n}$. 
    Set $A:=\{ \rho_{\theta_n,\tau_2}[\eta] \leq \rho_{\theta_n,\tau_1}[\eta] \}$ and let $\tau:= \tau_1 \I_A + \tau_2\I_{A^c}$. 
    We have: $A \in \cf_{\theta_n}$, 
    and $\tau \in \Theta_{\theta_n}$ (cf. Remark \ref{Remark_set_Theta}). 
    Also, $\tau = \tau_1$ on $A$, and $\tau=\tau_2$ on $A^c$. 
    Hence, by the "generalized" zero-one law property (vi), we get:
    \[ \begin{split}
          \rho_{\theta_n,\tau}[\eta] 
    =\I_A \rho_{\theta_n,\tau}[\eta] + \I_{A^c} \rho_{\theta_n,\tau}[\eta]
    &=\I_A \rho_{\theta_n,\tau_1}[\eta] + \I_{A^c} \rho_{\theta_n,\tau_2}[\eta]
    \\ &=\max(\rho_{\theta_n,\tau_1}[\eta], \rho_{\theta_n,\tau_2}[\eta]).
    \end{split}
  \]
  Hence, the family $\{\rho_{\theta_n,\tau}[\eta]:\tau \in \Theta_{\theta_n}\}$ is stable by maximization. 
  \\
 Hence, for each $n \in \N$, there exists a sequence $(\tau_k) \in \Theta_{\theta_n}^\N$ such that the following maximizing sequence property holds: 
 \[
  \esssup_{\tau\in \Theta_{\theta_n}} \rho_{\theta_n,\tau}[\eta] = \lim_{k \rightarrow +\infty} \uparrow \rho_{\theta_n,\tau_k}[\eta].
 \]
  Hence, for each $n \in \N$, there exists $(\tau_k) \in \Theta_{\theta_n}^\N$ such that 
  \[
    V(\theta_n)
   \leq \lim_{k \rightarrow +\infty} \uparrow \rho_{\theta_n,\tau_k}[\eta].
    \]
   Hence, 
$
     \limsup_{n \rightarrow +\infty}  V(\theta_n)
       \leq  \limsup_{n \rightarrow +\infty}  \lim_{k \rightarrow +\infty} \uparrow \rho_{\theta_n,\tau_k}[\eta]. 
  $
  \\ \textbf{By assumption \eqref{eq_9}} on $\rho$, we have: \[ \limsup_{n \rightarrow +\infty} \lim_{k \rightarrow +\infty} \uparrow \rho_{\theta_n, \tau_k} [\eta] = \limsup_{n \rightarrow +\infty} \limsup_{k \rightarrow +\infty} \rho_{\theta_n, \tau_k} [\eta] = \eta.\] 
  Hence, we obtain: for each $m \in \mathbb{N}$,
\[ 
     \limsup_{n \rightarrow +\infty}  V(\theta_n)
       \leq  
       \eta = \sup_{p \geq m} \xi(\theta_p).
  \]
    Hence, by passing to the limit when $m \rightarrow +\infty$, we get:
       \[
     \limsup_{n \rightarrow +\infty}  V(\theta_n)
   \leq\limsup_{m \rightarrow +\infty} \xi(\theta_m),
    \]
   For the converse inequality, by Remark \ref{TEXRemark33}, Statement 1, if $\rho$ satisfies the assumption of knowledge preservation (property (iii)), then, for each $n\in\N$, $V(\theta_n) \geq \xi(\theta_n)$. Hence, $  \limsup_{n \rightarrow +\infty} V(\theta_n) \geq   \limsup_{n \rightarrow +\infty}\xi(\theta_n)$.
   Thus finishes the proof of the lemma.
\end{proof}

\begin{Remark}
    As a consequence of Lemma \ref{Lemma_assumption_on_rho}, under assumption \eqref{eq_9}, we have: \\
    If the pay-off family $\xi=(\xi(\tau):\tau \in \Theta)$ is such that $\limsup_{n \rightarrow +\infty} \xi(\theta_n) <+\infty$,
    then $ \limsup_{n \rightarrow +\infty}  V(\theta_n) <+\infty$.
\end{Remark}

We are now ready to prove Theorem \ref{Theorem_optimal_stopping_times}.
\begin{proof}[Proof of Theorem \ref{Theorem_optimal_stopping_times}]
     Let us show Statement 1. 
    \\ Let $\varepsilon >0$ and let $k \in \mathbb{N}$. 
     Let $\nu_k^{\varepsilon}$ be the stopping time defined in Eq. \eqref{Defn.epsilon_stopping_time}.
     \\ By Lemma \ref{Lemma_assumption_on_rho}, $   \limsupn V(\theta_n) = \limsupn \xi(\theta_n)$. Hence, as $\limsupn \xi(\theta_n)<+\infty$ by assumption, then $   \limsupn V(\theta_n) < +\infty$. Hence, in particular, on the event $\{\nu_k^{\varepsilon}=+\infty\},$ we have:
    \begin{equation} \label{Statment_V_finite}
      \limsupn V(\theta_n) =\limsupn \xi(\theta_n)<+\infty.
    \end{equation}
    On the other hand, on the same event $\{\nu_k^{\varepsilon}=+\infty\}$, we have: for each $\tau \in \Theta_{\theta_k}$, 
    \[
     V(\tau) \geq \xi(\tau)+\varepsilon \quad a.s.,
    \]
    by the definitions of $\nu_k^{\varepsilon}$ and of $A_k^{\varepsilon}$.
    \\ In particular, on $\{\nu_k^{\varepsilon}=+\infty\}$, for $n \geq k$, 
      $ V(\theta_n) \geq \xi(\theta_n)+\varepsilon$ a.s.
  \\
    Hence, on  $\{\nu_k^{\varepsilon}=+\infty\}$, 
    \begin{equation}
         \limsupn V(\theta_n) \geq \limsupn \xi(\theta_n)+\varepsilon \quad a.s.   \end{equation}
  But, $  \limsupn V(\theta_n) =\limsupn \xi(\theta_n)< + \infty$ by Statement \eqref{Statment_V_finite}. 
   \\ Hence, the event $\{\nu_k^{\varepsilon}=+\infty\}$ is $P$-negligible (that is, with $P$-probability zero). 
    \\
     Therefore, the set $A_k^{\varepsilon}$ is non-empty. 
     Moreover, it is stable by pairwise minimization. Hence, there exists a non-increasing sequence $(\tau_n)$ in $A_k^{\varepsilon}$ such that $ \tau_n \downarrow  \nu_k^{\varepsilon}$ a.s.
    In particular, $\nu_k^{\varepsilon}$ is a stopping time, $\nu_k^{\varepsilon} \geq \theta_k$ a.s., and $\nu_k^{\varepsilon} \in \Tk$ by stability of $\Tk$ when passing to a non-increasing limit(cf. Remark \ref{Remark_closed_under_limit}, Statement a).
    \\ 
    Moreover, by the definition of $\nu_k^{\varepsilon}$, by Remark \ref{Remark_RC},  and the fact that $\nu_k^{\varepsilon} < +\infty$ a.s.,
    \[
    V(\nu_k^{\varepsilon}) \leq \xi(\nu_k^{\varepsilon}) + \varepsilon.
    \]
Hence, by the monotonicity of $\rho$, 
  \[
   \rho_{\theta_k, \nu_k^{\varepsilon}}[V(\nu_k^{\varepsilon})] \leq \rho_{\theta_k, \nu_k^{\varepsilon}}[\xi(\nu_k^{\varepsilon}) + \varepsilon].
    \]
    By Assumption \ref{Assumption_rho_C} on the non-linear operator $\rho$, we get
     \[
\rho_{\theta_k, \nu_k^{\varepsilon}}[\xi(\nu_k^{\varepsilon}) + \varepsilon]
\leq \rho_{\theta_k, \nu_k^{\varepsilon}}[\xi(\nu_k^{\varepsilon})]+C\varepsilon,
    \]
    where $C>0$ (depending on $\rho$ only) is the constant from Assumption \ref{Assumption_rho_C}. 
    \\
    By using the \emph{$(\Theta, \rho)$}-martingale property of $V$ on $[\theta_k, \nu_k^{\varepsilon}]$ from Lemma \ref{Lemma_V_martingale_on_stochastic_interval} (which can be applied as $\nu_k^{\varepsilon}<+\infty$ and $\nu_k^{\varepsilon} \leq \nu_k$),   
     we get:
     $V(\theta_k)= \rho_{\theta_k, \nu_k^{\varepsilon}}[V(\nu_k^{\varepsilon})] $. 
   Hence,  $ V(\theta_k) = \rho_{\theta_k, \nu_k^{\varepsilon}}[V(\nu_k^{\varepsilon})]
         \leq \rho_{\theta_k, \nu_k^{\varepsilon}}[\xi(\nu_k^{\varepsilon})] +C\varepsilon. 
    $ 
    Hence, by setting $\tilde{\varepsilon} =C\varepsilon$, we have shown that $\nu_k^{\varepsilon}$ is $\tilde{\varepsilon} $-optimal. 
    This proves Statement 1.   

\noindent
Let us show Statement 2. \\
The proof of the second statement in the theorem is a consequence of Corollary \ref{corollary_V_martingale_on_interval}, of the optimality criterion from Proposition \ref{prop_optimality_criterion} , and Remark \ref{Remark_v_k_finite}. Indeed, if $\nu_k$ is finite a.s., then by Corollary \ref{corollary_V_martingale_on_interval}, V is a \emph{$(\Theta, \rho)$}-martingale on $[\theta_k, \nu_k]$. 
Moreover, by Remark \ref{Remark_v_k_finite}, $ V(\nu_k)=\xi(\nu_k)$.
Hence, by the optimality criterion (cf. Proposition \ref{prop_optimality_criterion}), we get: $\nu_k$ is optimal for the problem \eqref{optimal_stopping_problem_k}. 
\\ The converse implication in Statement 2 of the theorem is trivial. Indeed, if $\nu_k$ is optimal for the problem \eqref{optimal_stopping_problem_k}, then $\nu_k \in \Tk$, and in particular, 
$\nu_k < +\infty$ a.s..
\\
\end{proof}
\section{Doob's type convergence for non-negative \emph{$(\Theta, \rho)$}-supermartingales}\label{sec_Doob}

As we deal in this work with non-negative \emph{$(\Theta, \rho)$}-supermartingales $V$ on an \textbf{infinite time horizon}, it is natural to ask ourselves whether we can establish a Doob-type convergence result showing that $\limn V(\theta_n)$ exists in the a.s. sense. 
This section provides a positive answer to this question under some additional assumptions on the operators $\rho$. 

\noindent
We first state and show an important intermediate result. 
For this, we assume dependence on the first index only, that is, $\rho_{S, \tau}[\cdot]=\rho_{S}[\cdot]$. We assume the following additional properties on $\rho$: 
\begin{compactenum}
    \item [(viii)] \emph{("usual" zero-one law) }$\; I_A\rho_{S}[\eta] =\rho_{S}[I_A\eta],$ for all $A\in\cf_S$.  
    \item[(ix)] \emph{(positive homogeneity)} $\rho_S[b\eta]=b\rho_S[\eta]$, for $b>0$.
    \item[(x)] $\rho_{0} [\I_{A} ] =0  $ implies $P(A)=0$. 
\end{compactenum}

\begin{lemma}[Upcrossing]\label{Upcrossing_lemma}
Let $\rho$, depending on the first index only, satisfy properties (i),(iv),(v),(vii),(viii),(ix),(x), and the normalisation property $\rho_0[1]=1$. 
\\
   Let $X$ be a \textbf{non-negative} \emph{$(\Theta, \rho)$}-supermartingale.
    Let $0<a<b<+\infty$ ( $a,b \in \mathbb{Q}$).
    We define, by induction, the following stopping times: 
   $ T_0:=0 $
 and for all $k\in \N^*$, 
 \begin{align*}
S_k&:=\inf\{n \geq T_{k-1}:X_n \leq a\},  \\
T_k&:=\inf\{n \geq S_{k}:X_n \geq b\},
\end{align*}  
 with the convention that $inf \emptyset = + \infty$.
 \\ 
 Let $N_{\infty}([a,b])$ be the number of upcrossings of the interval $[a,b]$. 
  Let $N_{n}([a,b])$ be the number of upcrossings of the interval $[a,b]$ before time $n$.
  \\ Then, 
   \begin{compactenum}
   \item[(i)] $\rho_{S_k \wedge n}[\I_{\{T_k \leq n\}}]\leq (\frac{a}{b})\I_{\{S_k \leq n\}}$.
   \item[(ii)] $\rho_0[\I_{\{T_k<+\infty\}}]\leq (\frac{a}{b})^k$.
    \item[(iii)] 
  $P(N_{\infty}([a,b])< +\infty ) =1$.
   \end{compactenum}
\end{lemma}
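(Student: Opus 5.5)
Here $X_n$ stands for $X(\theta_n)$, so $S_k$ and $T_k$ are $\N\cup\{+\infty\}$-valued indices, and $\theta_{S_k\wedge n}$, $\theta_{T_k\wedge n}$ lie in $\Theta$. The plan is to copy the classical proof of Dubins' inequality, with the conditional expectation replaced by $\rho$. The needed facts about $\rho$ are monotonicity (iv), the usual zero-one law (viii) (to localise on $\cf_S$-events), positive homogeneity (ix), consistency (v) (to chain the estimates), monotone Fatou (vii) (to let $n\to\infty$), and (x) together with $\rho_0[1]=1$ (to pass from $\rho_0$-bounds to $P$-null sets).

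\emph{Step (i).} Fix $k$ and $n$, and put $\sigma:=\theta_{S_k\wedge n}\le\tau:=\theta_{T_k\wedge n}$, both in $\Theta$. On $\{T_k\le n\}$ we have $X(\tau)=X_{T_k}\ge b$, and $X\ge0$, so $b\,\I_{\{T_k\le n\}}\le X(\tau)$. By (ix) and (iv), and then the supermartingale property,
\[
b\,\rho_\sigma[\I_{\{T_k\le n\}}]\le\rho_\sigma[X(\tau)]\le X(\sigma).
\]
Since $\{T_k\le n\}\subset\{S_k\le n\}$ and $\{S_k\le n\}\in\cf_\sigma$, the usual zero-one law (viii) gives $\rho_\sigma[\I_{\{T_k\le n\}}]=\I_{\{S_k\le n\}}\rho_\sigma[\I_{\{T_k\le n\}}]$. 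On $\{S_k\le n\}$ we have $X(\sigma)=X_{S_k}\le a$, so combining yields (i). One technical point needs care: $\I_{\{T_k\le n\}}$ must be $\cf_\tau$-measurable (it is), and it is $\rho_\sigma$ of an $\cf_\tau$-variable, which is consistent with the one-index convention.

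\emph{Step (ii).} I argue by induction on $k$, working at a fixed horizon $n$ first. By consistency (v), then monotonicity with (i), then (ix),
\[
\rho_0[\I_{\{T_k\le n\}}]=\rho_0\bigl[\rho_{\theta_{S_k\wedge n}}[\I_{\{T_k\le n\}}]\bigr]\le\tfrac ab\,\rho_0[\I_{\{S_k\le n\}}]\le\tfrac ab\,\rho_0[\I_{\{T_{k-1}\le n\}}],
\]
where the last inequality uses $S_k\ge T_{k-1}$. Iterating down to $\rho_0[\I_{\{T_0\le n\}}]=\rho_0[1]=1$ gives $\rho_0[\I_{\{T_k\le n\}}]\le(a/b)^k$. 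Since $\I_{\{T_k\le n\}}\uparrow\I_{\{T_k<\infty\}}$, continuity from below (Remark~\ref{Remark_CB}) lets $n\to\infty$ and yields (ii).

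\emph{Step (iii).} We have $\{N_\infty=\infty\}=\bigcap_k\{T_k<\infty\}$, so $\I_{\{N_\infty=\infty\}}\le\I_{\{T_k<\infty\}}$ for every $k$. Monotonicity then gives $\rho_0[\I_{\{N_\infty=\infty\}}]\le(a/b)^k\to0$, hence $\rho_0[\I_{\{N_\infty=\infty\}}]=0$, and property (x) gives $P(N_\infty=\infty)=0$.

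I expect the main obstacle to be step (i), specifically the localisation: justifying rigorously, via admissibility and the canonical decomposition, that $X(\theta_{S_k\wedge n})\le a$ on $\{S_k\le n\}$, and applying the zero-one law at a random index time. A secondary point is that $\rho_0$ lands in $\cf_0$, so the value $0$ in (x) should be read a.s. This is harmless.
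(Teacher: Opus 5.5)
Your proposal is correct and follows essentially the same route as the paper: for (i) the bound $b\,\I_{\{T_k\le n\}}\le X(\theta_{T_k\wedge n})$ combined with monotonicity, positive homogeneity, the supermartingale property and the usual zero-one law; for (ii) consistency plus iteration down to $\rho_0[1]=1$ and continuity from below; for (iii) monotonicity plus property (x). The only cosmetic difference is that you iterate the one-step estimate at a fixed horizon $n$ and let $n\to\infty$ once at the end, which needs (vii) only on the left-hand side. The paper instead passes to the limit in each one-step inequality and then iterates the bounds on $\rho_0[\I_{\{T_k<+\infty\}}]$.
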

\begin{proof}
    We have: 
    \[
    0=T_0 \leq S_1 \leq T_1 \leq \cdots \leq  S_k \leq T_k \leq \cdots. 
    \]
    We thus have: $\{T_k\leq n \} \subset \{S_k\leq n \} \subset  \{T_{k-1}\leq n \}$ (as $ T_{k-1} \leq S_k \leq T_k $).\\
    We also note that: 
    \[
   \{ N_{\infty}([a,b]) \geq k \} = \{T_k < +\infty\}.
    \]
    As $X$ is a \emph{$(\Theta, \rho)$}-supermartingale,
    \begin{equation} \label{eq_19}   
  \rho_{S_k \wedge n} [ X_{T_k \wedge n}] \leq X_{S_k \wedge n}.
    \end{equation}
  Then, 
    \[
 \I_{\{S_k\leq n \}} \rho_{S_k \wedge n} [ X_{T_k \wedge n}] \leq  \I_{\{S_k\leq n \}} X_{S_k \wedge n}.
    \]
    Hence,  as $ X_{S_k \wedge n} \leq a$ on $\{S_k\leq n \}$,
    \[
 \I_{\{S_k\leq n \}} \rho_{S_k \wedge n} [ X_{T_k \wedge n}] \leq  \I_{\{S_k\leq n \}} X_{S_k \wedge n}
\leq  \I_{\{S_k\leq n \}} a.
     \]
  On the other hand, 
      \[\begin{split}
 \I_{\{S_k\leq n \}} \rho_{S_k \wedge n} [ X_{T_k \wedge n}] &=  \I_{\{S_k\leq n \}} \rho_{S_k \wedge n} [ X_{T_k \wedge n}\I_{\{T_k\leq n \}} +  X_{T_k \wedge n}\I_{\{T_k > n \}} ] 
 \\& \geq  \I_{\{S_k\leq n \}} \rho_{S_k \wedge n} [ X_{T_k \wedge n} \I_{\{T_k\leq n \}} ],
      \end{split} \]
      as $X$ is non-negative and $\rho_{S_k\wedge n}$ is monotone.
      \\   Hence,
      \begin{equation}\label{eq_22}
           \I_{\{S_k\leq n \}} \rho_{S_k \wedge n} [ X_{T_k \wedge n} \I_{\{T_k\leq n \}} ]
      \leq  \I_{\{S_k\leq n \}} a.
      \end{equation}
     By the "usual" zero-one law property (property (viii)), 
      \begin{equation}\label{eq_23}
       \I_{\{S_k\leq n \}} \rho_{S_k \wedge n} [ X_{T_k \wedge n} \I_{\{T_k\leq n \}} ]
      =   \rho_{S_k \wedge n} [ X_{T_k \wedge n} \I_{\{S_k\leq n \}} \I_{\{T_k\leq n \}} ]
       =   \rho_{S_k \wedge n} [ X_{T_k \wedge n} \I_{\{T_k\leq n \}} ],
       \end{equation}
      where we have used that $\{S_k\leq n , T_k \leq n \}=\{T_k \leq n \} $. 
      \\  On the other hand, 
      \[
       X_{T_k \wedge n} \I_{\{T_k\leq n \}} \geq b  \I_{\{T_k\leq n \}}. 
      \]
  Using this, Eqs. \eqref{eq_22} and \eqref{eq_23}, and the monotonicity of $\rho_{S_k \wedge n}$, we get:
     \[
      \rho_{S_k \wedge n} [ b \I_{\{T_k\leq n \}} ] \leq 
      \rho_{S_k \wedge n} [ X_{T_k \wedge n} \I_{\{T_k\leq n \}}] 
      \leq a \I_{\{S_k\leq n \}}.
     \]
       Using now the positive homogeneity of $\rho$ (as $b >0$), we get: 
          \[
      \rho_{S_k \wedge n} [\I_{\{T_k\leq n \}} ] \leq \frac{a}{b} \I_{\{S_k\leq n \}},
     \]
     which is Statement (i).
    \\ From this, we have:  
       \[
      \rho_{S_k \wedge n} [\I_{\{T_k\leq n \}} ] \leq  \left(\frac{a}{b}\right) \I_{\{S_k\leq n \}} \leq \left(\frac{a}{b}\right) \I_{\{T_{k-1}\leq n \}}, \quad \text{as} \quad  T_{k-1} \leq S_k.
     \] 
     By using the monotonicity, the consistency, and the positive homogeneity of $\rho$, we get:
     \begin{equation}\label{eq_18}
      \rho_{0} [\I_{\{T_k\leq n \}} ]  \leq \left(\frac{a}{b}\right) \rho_{0} [\I_{\{T_{k-1}\leq n \}}].
        \end{equation}
      We will pass to the limit when $n$ tends to $+\infty$. 
      We have: $\I_{\{T_k\leq n \}}$ is non-decreasing in $n$ and its limit is $ \I_{\{T_k < +\infty \}}$.  Also, $ \I_{\{T_{k-1}\leq n \}}$ is non-decreasing in $n$ and its limit is $ \I_{\{T_{k-1} < +\infty \}}$. 
       \\ By using this and the continuity from below (cf. Remark \ref{Remark_CB}), we get: 
      \[
         \rho_{0} [\I_{\{T_k< +\infty \}} ]  \leq  \left(\frac{a}{b}\right) \rho_{0} [\I_{\{T_{k-1}< +\infty \}}].
       \]
        By iterating over $k$ and using 
        the normalisation property ($\rho_0[1]=1$), we get: 
        \[
         \rho_{0} [\I_{\{T_2< +\infty \}} ]  \leq  \left(\frac{a}{b}\right) \rho_{0} [\I_{\{T_{1}< +\infty \}}] 
         \leq  \left(\frac{a}{b}\right)^2 \rho_{0} [\I_{\{T_{0}< +\infty \}}] 
         = \left(\frac{a}{b}\right)^2 \rho_{0} [1] = \left(\frac{a}{b}\right)^2 1,
       \]
       and, more generally, 
        \[
         \rho_{0} [\I_{\{T_k< +\infty \}} ]   
         \leq  \left(\frac{a}{b}\right)^k \rho_{0} [\I_{\{T_{0}< +\infty \}}] 
         = \left(\frac{a}{b}\right)^k \rho_{0} [1] = \left(\frac{a}{b}\right)^k 1,
       \]
       which is Statement (ii). 
      \\ Hence, 
         \[
         \rho_{0} [\I_{\{ N_{\infty}([a,b]) \geq k \} } ]  \leq  \left(\frac{a}{b}\right)^k.
       \]
       To prove Statement (iii),
       we will pass to the limit when $k \rightarrow +\infty$.
        We set: 
       for $k\geq 0$, $A_k:={\{ N_{\infty}([a,b]) \geq k \} } $. 
       We have: $A_{k+1} \subset A_k$, for each $k$, and $(\I_{A_k})$ is non-increasing in $k$. 
       Hence, by the monotonicity of of $\rho_0$, $(\rho_0[\I_{A_k}])$ is a non-increasing sequence (in $k$). 
       We thus get:  
      \[
         0 \leq \lim_{k \rightarrow +\infty} \downarrow \rho_{0} [\I_{\{ N_{\infty}([a,b]) \geq k \} } ]=   \lim_{k \rightarrow +\infty} \downarrow \rho_{0} [\I_{A_k} ]\leq  \lim_{k \rightarrow +\infty} \left(\frac{a}{b}\right)^k = 0, \quad \text{as} \quad b>a>0.
       \] 
        We have, by the monotonicity of $\rho_0$,
       \[
    \rho_{0} [\I_{\{ N_{\infty}([a,b]) \geq +\infty \} } ] \leq \rho_{0} [\I_{\{ N_{\infty}([a,b]) \geq k \} } ], \quad \text{for any} \quad k \in \N.
       \]
      Hence, 
       \[
       \rho_{0} [\I_{\{ N_{\infty}([a,b]) \geq +\infty \} } ] \leq  \lim_{k \rightarrow +\infty} \downarrow \rho_{0} [\I_{\{ N_{\infty}([a,b]) \geq k \} } ] = 0.
       \]
       This implies, by using property (x), that 
       \[
     P(N_{\infty}([a,b])\geq +\infty ) =0,
       \]
        which is Statement (iii).
\end{proof}

\begin{theorem}[Doob-type convergence theorem for \emph{$(\Theta, \rho)$}-supermartingale]\label{Doob's_type_convergence_theorem}
Let $\rho$ satisfy the properties from the upcrossing lemma (Lemma \ref{Upcrossing_lemma}). 
   Let $M=(M(\tau))_{\tau \in \Theta_{\theta_k}}$ be a non-negative \emph{$(\Theta, \rho)$}-supermartingale. 
   Then, there exists a (non-negative) random variable $M_{\infty}$ such that 
   \[
   M_{\infty}= \lim_{n \rightarrow \infty} M(\theta_n) \quad a.s.  
   \]
\end{theorem}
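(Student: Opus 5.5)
The plan is the classical route via the upcrossing lemma (Lemma \ref{Upcrossing_lemma}), applied to the discrete-time sequence $X_n := M(\theta_n)$.

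First, set $X_n := M(\theta_n)$ for $n \in \N$ (shifting indices if $M$ is only indexed by $\Theta_{\theta_k}$, i.e.\ working with $n \geq k$ and $\rho_{\theta_k}$ in place of $\rho_0$). The random times $S_k, T_k$ of the upcrossing lemma take values in $\N \cup \{+\infty\}$. The associated Bermudan times $\theta_{S_k \wedge n}$ and $\theta_{T_k \wedge n}$ belong to $\Theta$, because $\{S_k = j\} \in \cf_{\theta_j}$. By admissibility, $M(\theta_{S_k \wedge n})$ coincides with $X_{S_k \wedge n}$. Hence the supermartingale inequality \eqref{eq_19} used in the lemma is exactly the $(\Theta,\rho)$-supermartingale property of $M$ between $\theta_{S_k\wedge n} \leq \theta_{T_k \wedge n}$. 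So the upcrossing lemma applies and gives, for each pair of rationals $0<a<b$, $P(N_\infty([a,b]) < +\infty) = 1$.

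Second, I take the countable union of the null sets $\{N_\infty([a,b]) = +\infty\}$ over rationals $0<a<b$. Off this null set, for every such pair the sequence $(X_n)$ does not oscillate across $[a,b]$ infinitely often. The classical argument then applies: if $\liminf_n X_n < \limsup_n X_n$, one can choose rationals $a<b$ strictly between them. Since $X \geq 0$, we get $\liminf_n X_n \geq 0$, and if $\liminf_n X_n = 0$ we may still pick $a>0$ rational with $a < \limsup_n X_n$. The sequence must then cross $[a,b]$ upward infinitely often, a contradiction. Hence $\liminf_n X_n = \limsup_n X_n$ a.s. Setting $M_\infty := \limsup_n M(\theta_n)$, which is measurable and valued in $[0,+\infty]$, gives $M_\infty = \lim_n M(\theta_n)$ a.s.

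The main obstacle is the exact matching of the hypotheses of the upcrossing lemma. One must verify that $\theta_{S_k\wedge n}$ lies in $\Theta$, including the requirement that the sets $\{S_k = j\}$ are $\cf_{\theta_j}$-measurable, which follows inductively from $X_j$ being $\cf_{\theta_j}$-measurable. One must also translate the lemma's indices $S_k \wedge n$ into Bermudan times in the notation $\rho_{S_k\wedge n}$. A second point is that the limit may be $+\infty$. If finiteness of $M_\infty$ is desired, I would attempt to argue from $\rho_0[\I_{\{X_n \geq c\}}]$-type bounds via positive homogeneity, $\rho_0[1]=1$, property (x) and continuity from below. However, the statement only asks for existence of an a.s. limit in $[0,+\infty]$, so I would not pursue finiteness.
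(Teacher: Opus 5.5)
Your proposal is correct and follows the paper's proof: the event $\{\liminf_n M(\theta_n) < \limsup_n M(\theta_n)\}$ is a countable union over rationals $0<a<b$ of sets contained in $\{N_\infty([a,b])=+\infty\}$, and each of these is null by the upcrossing lemma. Your extra checks make explicit what the paper leaves implicit: that $\theta_{S_k\wedge n},\theta_{T_k\wedge n}\in\Theta$, the identification of $M(\theta_{S_k\wedge n})$ with $X_{S_k\wedge n}$ via admissibility, and the index shift for $M$ indexed by $\Theta_{\theta_k}$. For that shift it is slightly simpler to start the upcrossing times at $T_0:=k$ but keep $\rho_0$ as the outer operator (using consistency), rather than replacing $\rho_0$ by $\rho_{\theta_k}$, because the normalisation $\rho_0[1]=1$ and property (x) are assumed only for $\rho_0$.
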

\begin{Remark}
We place ourselves under the assumptions of Theorem \ref{Doob's_type_convergence_theorem}.
We assume moreover that $M(0) < \infty$ a.s. and that $\rho_0$ satisfies \textbf{the strong Fatou property}, that is, 
$\rho_{0}[\eta] \leq \liminf_{n \to +\infty} \rho_{0}[\eta_{n}]$, for $(\eta_{n}), \eta$ such that $\lim_{n \to +\infty}\eta_{n} = \eta$ a.s.
Then,  $M_{\infty}$ is finite a.s. \\
  Indeed, let $A_k := \{M_{\infty} \geq k\}$, for $k \geq 1$. 
 We have $A_{k+1} \subset A_k$, for each $k$.
 Moreover, as $M$ is non-negative, we have: $ \I_{ \{M_{\infty} \geq k \} } \leq \frac{(M_{\infty} \wedge k)}{k} \leq \frac{M_{\infty}}{k} $. 
 By the monotonicity of $\rho_0$, we get: 
 \begin{equation} \label{eq_21}
        \rho_{0} [\I_{\{M_{\infty} \geq k \} } ]  \leq  \rho_{0} \left[\frac{M_{\infty} \wedge k}{k}\right]\leq  \frac{1}{k} \rho_{0} \left[M_{\infty}\right].
    \end{equation}
       where we have used also the positive homogeneity of $\rho_0$ in the last inequality.
       \\
       We will show that \begin{equation}\label{Eq_24}
           \rho_{0} \left[M_{\infty}\right]<+\infty.
       \end{equation}
       Once, this is proven, we can pass to the limit when $k \rightarrow +\infty$, to get: 
       \[
       0\leq \lim_{k \rightarrow +\infty} \downarrow \rho_{0} [\I_{\{M_{\infty} \geq k \} } ] \leq  \lim_{k \rightarrow +\infty} \left (\frac{1}{k} \rho_{0} \left[M_{\infty}\right] \right) =0
       \]
       Hence, as $\rho_{0} [\I_{\{M_{\infty} \geq \infty \} } ] \leq \rho_{0} [\I_{\{M_{\infty} \geq k \} } ]$, for each $k \geq 1$, 
       we conclude that $\rho_{0} [\I_{\{M_{\infty} \geq \infty \} } ] =0$.
       This implies, by using property (x), that 
     $P(M_{\infty} = +\infty ) =0$.
    Thus, $M_{\infty}$ is finite. 
       It remains to show \eqref{Eq_24}. 
       \\
       As $M$ is a \emph{$(\Theta, \rho)$}-supermartingale, for each $n$,
          $\rho_{0} \left[ M(\theta_n) \right]\leq  M(\theta_0)$. %= M(0)
       Hence, 
       $
     \sup_{n}   \rho_{0} \left[ M(\theta_n) \right] 
           \leq    M(\theta_0) 
       $. 
     \\ On the other hand, 
      we have: 
       \[  \rho_{0} \left[M_{\infty}  \right]   
            =   \rho_{0} \left[ \lim_{n \rightarrow \infty} M(\theta_n) \right]   
             \leq  \liminf_{n \rightarrow \infty}   \rho_{0} \left[ M(\theta_n) \right]   
             \leq  \sup_{n}   \rho_{0} \left[ M(\theta_n) \right], 
       \]
       where we have used the \textbf{strong Fatou property of $\rho_0$} in the first inequality.
       \\  By \textbf{the assumption $M(0)< +\infty$}, and the \emph{$(\Theta, \rho)$}-supermartingale property of $M$, we thus get: $ \rho_{0} \left[M_{\infty}\right]  < +\infty$.
\end{Remark}
\begin{proof}[Proof of Theorem \ref{Doob's_type_convergence_theorem}]
The proof of this theorem follows from the upcrossing lemma (Lemma \ref{Upcrossing_lemma}), combined with a standard reasoning from real analysis.
\\
Let $E$ be the set \[E:=\{\omega \in \Omega: 0 \leq \liminf_{n \rightarrow \infty} M(\theta_n)(\omega) < \limsupn M(\theta_n)(\omega) \}.\] 
To prove that the non-negative \emph{$(\Theta, \rho)$}-supermartingale $M$ converges a.s., we will show that $P(E)=0$. 
\\
The set $E$ is equal to $\bigcup_{\substack{a,b \in \mathbb{Q} \\ 0<a<b<+\infty}}  E_{a,b}$, where for each $a,b$ such that $0<a<b<+\infty$,
\[
E_{a,b}:=\{\omega \in \Omega: \liminf_{n \rightarrow \infty} M(\theta_n)(\omega) <a<b< \limsupn M(\theta_n)(\omega)\}.
\]
On the other hand, for each $a,b$ such that $0<a<b<+\infty$, %we fixed a,b
$E_{a,b} \subset \{N_{\infty}([a,b])= +\infty\}$.
Hence, by the upcrossing lemma (Lemma \ref{Upcrossing_lemma}), %statement (iii) 
we have: \[P(E_{a,b}) \leq P( \{N_{\infty}([a,b])= +\infty\})=0.\]
Hence, by countable union,  $ P(E)=P\left(\bigcup_{\substack{a,b \in \mathbb{Q} \\ 0<a<b<+\infty}}  E_{a,b}\right)=0$, which is the desired result.
\end{proof}

\section{Example: Infinite horizon BSDEs with driver $g$ and $\ce^g$-evaluation}\label{Example_1}

\cite{chen2000infinite} have considered \textbf{infinite horizon} BSDEs in the Brownian framework with square-integrable terminal condition (at $T=+\infty$) given by $\eta \in L^2(\cal F_{\infty})$.
\\ Let $(\mathcal{F}_{t})_{t\geq 0}$ be the (augmented) natural filtration generated by a one-dimensional Brownian motion $(W_t)_{t\geq 0}$.
\\
\cite{chen2000infinite} considered the following infinite horizon BSDE with terminal condition $\eta \in L^2(\cal F_{\infty})$:
\begin{equation} \label{eq_BSDE}
    Y_t = \eta + \int_t^{+\infty} g(s,Y_s,Z_s) ds -  \int_t^{+\infty} Z_s dW_s , \quad {\rm for} \quad t \geq 0,
\end{equation}
where $g:(\Omega,\R_+,\R,\R)\rightarrow \R$ is a driver satisfying the two conditions: 
\begin{compactenum}
    \item[(H1)] For each $(y,z) \in \R \times \R$,  $g(\cdot, \cdot,y,z)$ is progressively measurable and such that  
   $\E\left[\left(\int_0^{+\infty}|g(s,y,z)|ds\right)^2\right]<+\infty$
    \item[(H2)] There exist two positive deterministic functions $u:\R_+ \rightarrow \R_+$ and $v:\R_+ \rightarrow \R_+$ such that 
    $\int_0^{+\infty}|u(s)|^2ds<+\infty$ and  
    $\int_0^{+\infty}|v(s)|ds<+\infty$, and 
    \[
  |g(t,y_1,z_1) -  g(t,y_2,z_2)| \leq v(t) |y_1-y_2| +u(t) |z_1-z_2|, 
    \]
    for each $t \geq 0$, for each $(y_1,z_1) \in \R^2$ and each $(y_2,z_2) \in \R^2$.  % MoE 3.  
 
\end{compactenum}
\noindent
\\ If $g$ satisfies the above properties, then we write $g \in \mathscr{L}(u,v)$.

 We use the following notation: 
 \begin{itemize}
     \item $H^2(0,+\infty)$ is the space of predictable stochastic processes $X=(X_t)_{t \geq 0}$ such that 
     \[
     ||X||^2_{H^2} := ||X||^2_{H^2(0,+\infty)} := \E\left(\int_{0}^{+\infty} |X_s|^2 ds\right) < +\infty.
     \]

      \item $S^2(0,+\infty)$ is the space of adapted continuous processes $X=(X_t)_{t \geq 0}$ such that 
     \[
     ||X||^2_{S^2} := ||X||^2_{S^2(0,+\infty)} := \E(\sup_{t \geq 0} |X_t|^2) < +\infty. 
     \]
 \end{itemize}
  \noindent  
 \\ We will denote the BSDE from Eq.\eqref{eq_BSDE} with input $(\eta,g)$ where $\eta \in L^2(\cal F_{\infty})$ and $g \in \mathscr{L}(u,v)$ by ${\rm BSDE}_{\infty} (\eta,g)$.
 \\ \cite{chen2000infinite} have shown: if $g \in \mathscr{L}(u,v)$ and if $\eta \in L^2(\mathcal{F}_{\infty})$, then there exists a unique solution pair $(Y_t,Z_t) \in S^2(0,+\infty) \times H^2(0,+\infty)$ to the ${\rm BSDE}_{\infty}(\eta, g)$ from Eq. \eqref{eq_BSDE}. 
 Moreover, let $\eta_1 \in L^2(\mathcal{F}_{\infty})$, $\eta_2 \in L^2(\mathcal{F}_{\infty})$, let $g \in \mathscr{L}(u,v)$, let $(Y^1,Z^1)$ (resp. $(Y^2,Z^2)$) be the unique solution to the ${\rm BSDE}_{\infty}(\eta_1,g)$  (resp. to the ${\rm BSDE}_{\infty}(\eta_2,g)$). %Theorem 1.3 in \cite{chen2000infinite}
 Then, 
 \[
  ||Y^1-Y^2||_{S^2}^2 + ||Z^1-Z^2||_{H^2}^2 \leq C \E(|\eta_1 - \eta_2|^2) ,
 \]
 where $C>0$.  
 \\
 Furthermore, the following convergence result holds true: 
 \begin{compactenum}
     \item[(C1)] %%Theorem 1.4 in \cite{chen2000infinite}
     Let $\eta_k \in L^2(\mathcal{F}_{\infty})$, for each $k \in \N$, 
     and let $\eta \in L^2(\mathcal{F}_{\infty})$. 
     Let $g \in \mathscr{L}(u,v)$, 
     and let $(Y^k,Z^k)$ be the solution to the ${\rm BSDE}_{\infty} (\eta_k,g)$,  for each $k \in \N$.
     Then, the condition
     \[
    ||\eta_k - \eta||_{L^2(\mathcal{F}_{\infty})}  \xrightarrow[k \rightarrow +\infty]{}   0
     \]
     implies 
      \[
    ||Y^k - Y||_{S^2}  \xrightarrow[k \rightarrow +\infty]{}   0,
     \quad {\rm and} \quad
    ||Z^k - Z||_{H^2}  \xrightarrow[k \rightarrow +\infty]{}   0, 
     \]
     where $(Y,Z)$ is the solution to the ${\rm BSDE}_{\infty} (\eta,g)$. 
     \item[(C2)] %%Corollary 1.5 in \cite{chen2000infinite}
     Let $t > 0$ be a finite horizon. 
     Let $\eta \in L^2(\mathcal{F}_{\infty})$
      and $g \in \mathscr{L}(u,v)$, 
     and let $(Y,Z)$ be the unique solution of the ${\rm BSDE}_{\infty} (\eta,g)$.
     Let $\eta^t:=\E[\eta|\mathcal{F}_t]$.
     Let $(Y^t, Z^t)$ be the unique solution to the BSDE with finite terminal horizon $t$,  and terminal condition $\eta^t$ (at time $t$), where $(Y^t_s)$ is extended for $s>t$, by setting $Y^t_s=\eta^t$, for $s>t$.
     Then, 
       \[
    ||Y^t - Y||_{S^2(0,+\infty)}  \xrightarrow[t \rightarrow +\infty]{}   0,
     \quad {\rm and} \quad
    ||Z^t - Z||_{H^2(0,+\infty)}  \xrightarrow[t \rightarrow +\infty]{}   0.
     \]
     \\
 \end{compactenum}

Some more properties about these BSDEs are established in \cite{chen2000infinite} under the additional assumption (H) on $g$, where:
\\ (H) $g(t,y,0)=0$, for each $(t,y) \in \R_+ \times \R$.

Under (H), they define: \\
The $g$-conditional expectation of $\eta \in L^2(\mathcal{F}_{\infty})$ by
\begin{equation}\label{eq_g_conditional_expectation}
   \ce^g[\eta|\mathcal{F}_{t}] := 
   \ce^g_t[\eta] := 
   Y_t,
\end{equation}
where $(Y_t)_{t \geq 0}$ is the first component of the solution of ${\rm BSDE}_{\infty} (\eta,g)$.
\\ The following properties hold true (for $\eta \in L^2(\mathcal{F}_{\infty})$):
\begin{compactenum}
    \item[(i)] \emph{(usual zero-one law)}  %%Lemma 2.2 in \cite{chen2000infinite}
    $\; \ce_{t}^g[I_A\eta] =I_A\ce_{t}^g[\eta],$ for all $A\in\cf_t$.
    \item[(ii)] \emph{(monotonicity)} %Theorems A.2 and A.3 in \cite{chen2000infinite}
    If  $\eta_1\geq \eta_2$, then
     $\ce_{t}^g[\eta_1]\geq \ce_{t}^g[\eta_2]$, for each $t \geq 0$. 
\item[(iii)] \emph{(strict monotonicity)} 
 If  $\eta_1\geq \eta_2$ and $P(\eta_1 > \eta_2)>0$, then
     $\ce_{t}^g[\eta_1]>\ce_{t}^g[\eta_2]$, for each $t \geq 0$. 
\item[(iv)]  \emph{(knowledge preservation)} %Theorems A.4,(i) in \cite{chen2000infinite}
If $\eta$ is moreover $\cf_t$-measurable, then  $\ce_{t}^g[\eta]=\eta$. 
\item[(v)] \emph{(time consistency)} For all $s, t$ such that $0 \leq s \leq t <+\infty$, $\;\ce_{s}^g[\ce_{t}^g[\eta]]= \ce_{s}^g[\eta]$. 
\item[(vi)] \emph{(continuity with respect to terminal condition)}  %Theorems A.3, (4), in \cite{chen2000infinite}
For $(\eta_k), \eta \in L^2(\mathcal{F}_{\infty})$ such that $  || \eta_k - \eta||_{L^2(\mathcal{F}_{\infty})}  \xrightarrow[k \rightarrow +\infty]{} 0$, then 
\[
  || \ce^g_{\cdot}[\eta_k] - \ce^g_{\cdot}[\eta]||_{S^2}  \xrightarrow[k \rightarrow +\infty]{}  0.
\]
%\item[(vii)] \emph{(ph)}
\end{compactenum}

\begin{Remark}
    If we assume, moreover, that $g$ satisfies: \\ (PH) for $b > 0$, $g(t,by,bz)=bg(t,y,z)$, we conclude from Eq.\eqref{eq_BSDE} and from the existence and uniqueness of the solution (by using a standard argument mimicking the one from the BSDE with a finite horizon) that $\ce^g[\cdot|\cf_t]=\ce^g_{t}[\cdot]$ is positively homogeneous (PH), that is: for $b > 0$, for $\eta \in L^2(\cf_{\infty})$, $\ce^g[b\eta|\cf_t]=b\ce^g[\eta|\cf_t]$.
\end{Remark}
\noindent
\textbf{Financial example} \textbf{(a)} Let us place ourselves in a complete financial market under a Black and Scholes-type model. 
Let us take $r=0$, $\sigma=1$.
In this case, the market price of risk, $\theta(t):=\frac{\mu(t)-r}{\sigma}=\mu(t)$. 
If $\mu(t)$ is a deterministic function such that  $\int_0^{+\infty}|\mu(t)|^2dt<+\infty$, 
then $g$ 
defined by $g(t,y,z)=-\theta(t)z$ satisfies the properties $(H1)$, $(H2)$ and $(H)$.
 Moreover, in this case, for $b>0$, $g(t,by,bz)=bg(t,y,z)=-\theta(t)bz$. 
 Hence, (PH) is also satisfied. 
 \\  \textbf{(b)} Let $r=0$ (as before). 
 Let now $(\mu_t)$ and $(\sigma_t)$ be predictable processes such that $\sigma_t >0$. 
 Let us define the market price of risk, $\theta_t:=\frac{\mu_t}{\sigma_t}$, for each $t \geq 0$. 
 Then, $g(t,y,z):=-\theta_tz$ satisfies the properties $(H1)$, $(H2)$ and $(H)$ if there exists a function $u(t)$ such that $\int_0^{+\infty}|u(t)|^2 dt < +\infty$ and such that: %uniformly in $\omega$
 a.s. for each $t\geq 0$, $|\theta_t| \leq |u(t)|$.
 Moreover, in this case, (PH) is also satisfied.
\begin{Remark}\label{remark_limY_t}
The $g$-conditional expectation $\ce^g_t[\eta]$ 
 satisfies the following property: for $\eta \in L^2(\mathcal{F}_{\infty})$,
   $\lim_{t \rightarrow +\infty} \ce_{t}^g[\eta] = \eta$ in $L^2$ (thus, up to a subsequence,  $\lim_{t \rightarrow +\infty} \ce_{t}^g[\eta] = \eta$ a.s.).
    Indeed, from the BSDE \eqref{eq_BSDE}, we have :
    \[ \begin{split}
    \E |Y_t - \eta|^2 
     &\leq \E \left( \left|\int_t^{+\infty} g(s,Y_s,Z_s) ds\right| + \left|\int_t^{+\infty} Z_s dW_s \right|\right)^2
     \\ &\leq 2 \E\left( \left|\int_t^{+\infty} g(s,Y_s,Z_s) ds\right|^2\right) + 2 \E \left( \left|\int_t^{+\infty} Z_s dW_s\right|^2\right)
      \\ &\leq 2 \E\left( \int_t^{+\infty} \left|g(s,Y_s,Z_s) \right| ds\right)^2 + 2 \E \left(\int_t^{+\infty} Z_s^2 ds\right).
     \end{split} \]
 Since $Z \in H^2$, $\E \left( \int_0^{+\infty} Z_s^2 ds\right) < \infty $. We thus have: 
$ 
\E \left( \int_t^{+\infty} Z_s^2 ds\right) 
 =  \int_t^{+\infty} \E \left( Z_s\right)^2 ds
\xrightarrow[t \rightarrow\infty]{} 0$,
where we have used the dominated convergence theorem. 
\\ 
Moreover, under assumptions (H2) and (H),
we have: 
 \[\begin{split}
\E \left( \int_0^{+\infty} |g(s,Y_s,Z_s)| ds \right)^2
&\leq \E\left(\int_0^{+\infty} v(s)|Y_s|+u(s) |Z_s| ds\right)^2 
% by using (a+b)^2 \leq 2a^2 + 2b^2
\\&\leq 2\E\left(\int_0^{+\infty} v(s)|Y_s| ds\right)^2
+ 2 \E\left(\int_0^{+\infty} u(s)|Z_s| ds\right)^2.
   \end{split}  \]
 Also, by the assumption (H2), we get:
 \[
 \E\left(\int_0^{+\infty} v(s)|Y_s| ds\right)^2 
 \leq 
 \E\left(\int_0^{+\infty} v(s) ds  \, \sup_{t\geq0}|Y_t| \right)^2
  \leq 
 \left(\int_0^{+\infty} v(s) ds  \right)^2  ||Y||_{S^2}^2
 < +\infty,
 \]
 and 
 \[
  \E\left(\int_0^{+\infty} u(s) |Z_s| ds\right)^2 
  \leq \E \left( \int_0^{+\infty} u^2(s)ds \, \int_0^{+\infty} |Z_s|^2ds\right)
  =  \int_0^{+\infty} u^2(s)ds \, ||Z||_{H^2}^2
  < +\infty ,
 \]
 where we have used the Cauchy-Schwarz inequality and assumption (H2).
 \\ Hence, we obtain:
 \[
 \E \left( \int_0^{+\infty} |g(s,Y_s,Z_s)| ds \right)^2 < +\infty.
 \]
 Hence, we have: $\int_0^{+\infty} |g(s,Y_s,Z_s)|ds < +\infty$ a.s., and $\int_t^{+\infty} |g(s,Y_s,Z_s)| ds \xrightarrow[t \rightarrow\infty]{} 0$.
\\ Consequently, we get: 
 \[\E \left( \int_t^{+\infty} |g(s,Y_s,Z_s)| ds \right)^2\xrightarrow[t \rightarrow\infty]{} 0,\]
 where we have used the dominated convergence theorem (as $B_t^2:=\left( \int_t^{+\infty} |g(s,Y_s,Z_s)| ds \right)^2 \xrightarrow[t \rightarrow\infty]{} 0$ and $B_t^2 \leq B_0^2$, with $B_0^2 \in L^1$).
\\ Therefore, $ \E |Y_t - \eta|^2 \xrightarrow[t \rightarrow\infty]{} 0$, which is the desired property.
\end{Remark}

We can extend the above definition of the (non-linear) $g$-evaluation $\ce_S^g[\cdot]=\ce^g[\cdot|\cf_S]$ on an infinite horizon to the case where $S$ is a stopping time. 
\\
The $g$-conditional expectation of $\eta \in L^2(\mathcal{F}_{\infty})$ at time $S$, where $S$ is a stopping time (a.s. finite), is defined by
\begin{equation} \label{conditional_expectation_at_S}
   \ce^g[\eta|\mathcal{F}_{S}] := 
   \ce^g_S[\eta] := 
   Y_S,
\end{equation}
where $(Y_t)_{t \geq 0}$ is the first component of the solution of ${\rm BSDE}_{\infty} (\eta,g)$.
\\
The following properties hold true (for $\eta \in L^2(\mathcal{F}_{\infty})$):  
\begin{compactenum}
    \item[(i)] \emph{(usual zero-one law)}  %%Lemma 2.2 in \cite{chen2000infinite}
    $\; \ce_{S}^g[I_A\eta] =I_A\ce_{S}^g[\eta],$ for all $A\in\cf_S$.
    \item[(ii)] \emph{(monotonicity)} %Theorems A.2 and A.3 in \cite{chen2000infinite}
    If  $\eta_1\geq \eta_2$, then  $\ce_{S}^g[\eta_1]\geq \ce_{S}^g[\eta_2]$.
\item[(iii)] \emph{(strict monotonicity)} 
 If  $\eta_1\geq \eta_2$ and $P(\eta_1 > \eta_2)>0$, then $\ce_{S}^g[\eta_1]>\ce_{S}^g[\eta_2]$. 
\item[(iv)]  \emph{(knowledge preservation)} %Theorems A.4,(i) in \cite{chen2000infinite}
 If $\eta$ is moreover $\cf_S$-measurable, then  $\ce_{S}^g[\eta]=\eta$. 
\item[(v)] \emph{(time consistency)} For all $S, \tau $ such that $ S \leq \tau < +\infty $, $\;\ce_{S}^g[\ce_{\tau}^g[\eta]]= \ce_{S}^g[\eta]$. 
\item[(vi)] \emph{(continuity)}  %Theorems A.3, (4), in \cite{chen2000infinite}
For $(\eta_k), \eta \in L^2(\mathcal{F}_{\infty})$ such that $  || \eta_k - \eta||_{L^2(\mathcal{F}_{\infty})}  \xrightarrow[k \rightarrow +\infty]{} 0$, then \[ %\E[| \ce^g_{S}[\eta_k] - \ce^g_{S}[\eta]|^2]  \leq
|| \ce^g_{\cdot}[\eta_k] - \ce^g_{\cdot}[\eta]||_{S^2}  
\xrightarrow[k \rightarrow +\infty]{}  0.\]
\end{compactenum}
\noindent
\begin{Remark}
Let us prove that $\rho_S[\cdot]:=\ce^g[\cdot|\cf_S]$ satisfies the monotone Fatou property with respect to the terminal condition (property (vii) of $\rho$). 
Let $(\eta_{n}), \eta$ be such that $(\eta_{n})$ is non-decreasing, $\eta_{n} \in L^{2}(\cf_{\infty})$, $\sup_{n}\eta_{n} \in L^{2}(\cf_{\infty})$, and $\lim_{n \to +\infty} \uparrow \eta_{n} = \eta$ a.s.
Then, for each $n \in \N$, 
$\eta_0 \leq \eta_n \leq \sup_{m}\eta_{m}$.
Hence, for each $n \in \N$, 
\[|\eta_n| \leq \max(|\eta_0|,\sup_{m}\eta_{m}) \in L^2(\cf_{\infty}).\]
Hence, the sequence $(\eta_n)$ is bounded in $L^2(\cf_{\infty})$. 
Hence, by the monotone convergence, 
\[
 || \eta_n - \eta||_{L^2(\mathcal{F}_{\infty})}  \xrightarrow[n \rightarrow +\infty]{} 0.
\]
Hence, by property (vi) of the $g$-conditional expectation $\ce^g_S[\cdot]$, 
\[|| \ce^g_{\cdot}[\eta_n] - \ce^g_{\cdot}[\eta]||_{S^2}  \xrightarrow[n \rightarrow +\infty]{}  0.\]
Hence, 
\[ \E[| \ce^g_{S}[\eta_n] - \ce^g_{S}[\eta]|^2] \leq 
\E[ \sup_{t \geq 0}| \ce^g_{t}[\eta_n] - \ce^g_{t}[\eta]|^2]  \xrightarrow[n \rightarrow +\infty]{}  0.\] 
Hence, (up to a subsequence), $ \ce^g_{S}[\eta_n] \xrightarrow[n \rightarrow +\infty]{}  \ce^g_{S}[\eta]$ a.s.
\\ 
Moreover, $\ce^g_S[\eta_n]$ is non-decreasing, by the monotonicity property of $\ce^g_S[\cdot]$ (property (ii)).  
\\ The monotone Fatou property with respect to the terminal condition is thus proven. 
\\
Hence, the $g$-conditional expectation satisfies all the properties (i)-(vi) of the non-linear operators $\rho$.
\end{Remark}
\begin{Remark}\label{remark_C}
Let us prove that $\ce_{S}^g[\cdot]$ satisfies the following property: for $b>0$,
 \[
 \ce^g_{S}[\eta + b] 
 \leq  \ce^g_{S}[\eta]  + C b,
 \]
 where $C>0$ is a positive constant.\\
      By Lemma A.1 in \cite{chen2000infinite} (which uses a standard BSDE-linearization technique), there exists a probability measure $\mathbb{Q}$ and an adapted process $(a_t)$ which satisfies: for any $t\geq0$, $a_t \leq |a_t| \leq v(t)$, and such that: 
  \[
 \ce^g_{t}[\eta + b]-\ce^g_{t}[\eta]  = E_{\mathbb{Q}}[b e^{\int_t^{\infty} a_r dr}|\cf_t].
 \]
  Hence, as $b>0$,
   \[
 \ce^g_{t}[\eta + b]-\ce^g_{t}[\eta]  \leq E_{\mathbb{Q}}[b e^{\int_t^{\infty} v(r) dr}|\cf_t]
\leq b E_{\mathbb{Q}}[e^{\int_0^{\infty} v(r) dr}|\cf_t]
 \]
  As $v$ is a deterministic function, $E_{\mathbb{Q}}[ e^{\int_0^{\infty} v(r) dr}|\cf_t]= e^{\int_0^{\infty} v(r) dr}$.
  Hence,  for each $t \geq 0$,
    \[
 \ce^g_{t}[\eta + b]-\ce^g_{t}[\eta]  
\leq b e^{\int_0^{\infty} v(r) dr}.
 \]
  We set $C:= e^{\int_0^{\infty} v(r) dr}$. 
  We have: $0< C <+\infty$ (as, by (H2), $v$ is integrable).
  We note that $C$ depends on $\rho$ only (via the function $v$).
  Hence, for each $t \geq 0$, $  
 \ce^g_{t}[\eta + b]
 \leq  \ce^g_{t}[\eta] + Cb$. 
 Hence, for each a.s. finite stopping time $S$,  $  
 \ce^g_{S}[\eta + b]
 \leq  \ce^g_{S}[\eta] + Cb$ a.s.
\end{Remark}
By the above, we can apply our result in the case where $\rho_{S,\tau}:=\rho_{S}=\ce^g_S$.
We need however to ensure that the value family is in $L^2$. 
\begin{Assumption}\label{Hyp3}
For each   $\nu \in \Theta$, the random variable $V(\nu)$ is in $L^2$.
\end{Assumption}
 The following remark can be found in \cite{Grigorova-5} and remains valid in our framework as well, applied with $\rho_{S,\tau}:=\rho_{S}=\ce^g_S$.
\begin{Remark} \label{TEXRemaRk24}
Let $\rho$ satisfy the assumptions of admissibility (ii), knowledge preservation (iii), "generalized" zero-one law (vi), and monotonicity (iv). If the pay off family $\xi = (\xi(\tau))_{\tau \in \Theta}$ is $L^2$- integrable and dominated from above by an $L^2$-integrable $(\Theta, \rho)$-martingale $M$, then the value family $V$ satisfies the integrability Assumption \ref{Hyp3}.\\
Indeed, let $S \in \Theta$ be given. By Remark \ref{TEXRemark33}, Statement 2, $V(S) \geq \xi(S)$.\\ 
On the other hand, by assumption on $\xi$, for each $\tau \in \Theta_{S}$, $\xi(\tau) \leq M(\tau)$. Hence, by monotonicity of $\rho$, we have $\rho_{S, \tau}[\xi(\tau)] \leq \rho_{S, \tau}[M(\tau)] = M(S),$
where we have used the $(\Theta, \rho)$-martingale property of $M$ for the last equality.\\
So, $V(S) = \esssup_{\tau \in \Theta_{S}}\rho_{S, \tau}[\xi(\tau)] \leq M(S)$. Hence, we get $\xi(S) \leq V(S) \leq M(S)$, which proves that $V(S) \in L^{2}$. Therefore, Assumption \ref{Hyp3} is satisfied.
\\ In the case where $\rho_{S,\tau}:=\rho_{S}=\ce^g_S$, this condition amounts to the pay-off family $\xi = (\xi(\tau))_{\tau \in \Theta}$ being dominated by an $L^2$-integrable $(\Theta, \ce^g)$-martingale. 
\end{Remark}

\section{Appendix A}\label{Appendix}
\begin{Remark} \label{Remark_closed_under_limit}
\begin{itemize}
\item[a)] The case of a non-increasing sequence of stopping times $(\tau_n)$ in $ \Theta^{\mathbb{N}}$. \\
For each non-increasing sequence of stopping times 
$(\tau_n)_{n \in {\mathbb N}} \in \Theta^{\mathbb{N}}$, we have $\lim_{n \rightarrow + \infty} \tau_n$  $\in$ $\Theta$.
  Indeed, let $\omega \in \Omega$, for each $n$, there exist $l_n=l_n(\omega)$ such that $\tau_n(\omega)=\theta_{l_n}(\omega)$.
  Moreover, for each $n \in \N$, $0 \leq \tau_n(\omega) \leq \tau_1(\omega) < +\infty $.
  Hence, for each $n \in \N$, $\theta_{l_n}(\omega) \leq \tau_1(\omega) < +\infty $.
  \\ As $\theta_k(\omega) \uparrow +\infty$, there are only a finite number (depending on $\omega$) of time points $\theta_k(\omega)$ such that $\theta_k(\omega) \leq \tau_1(\omega)=\theta_{l_1}(\omega)$. 
  Hence, all elements of the sequence $(\tau_n(\omega))_{n \in \N}$ are valued in $\{0, \theta_1(\omega), \theta_2(\omega), \dots, \theta_{l_1}(\omega)\}$. 
   Therefore, we have $\lim_{n \rightarrow + \infty} \tau_n (\omega) \in \{0, \theta_1(\omega), \theta_2(\omega), \dots, \theta_{l_1}(\omega)\}$, which implies that from a certain rank onward, the sequence is constant. 
   The desired property is thus proved.

\item[b)] The case of a non-decreasing sequence of stopping times $(\tau_n)$ in $ \Theta^{\mathbb{N}}$. 
\begin{itemize}
    \item 
Let $(\tau_n) \subset \Theta^{\mathbb{N}}$ be a non-decreasing sequence of stopping times in $ \Theta^{\mathbb{N}}$.
Let $\tau$ be the (a.s.) limit of $(\tau_n)$. 
Let us fix $\omega \in \Omega$. 
For each $n \in \N$, there exists $k_n=k_n(\omega) \in \N$ such that $\tau_n(\omega)=\theta_{k_n}(\omega)$.
As $\tau_n (\omega)\leq \tau_{n+1}(\omega) $, we have $\theta_{k_n}(\omega) \leq \theta_{k_{n+1}}(\omega) $. 
Moreover, if for all $n \in \N$, $\tau_n (\omega)< \tau_{n+1} (\omega)$, then $\theta_{k_n}(\omega) < \theta_{k_{n+1}}(\omega) $. 
As the sequence $(\theta_k (\omega))$ is non-decreasing, we get $k_n < k_{n+1} $. 
This reasoning shows that $\lim_{n \rightarrow + \infty} \tau_n$ might converge to $+\infty$.

\item  If $(\tau_n)$ is a non-decreasing sequence of stopping times such that $\tau_n \uparrow \tau$ a.s. and 
$\tau$ is a.s. finite, then $\tau \in \Theta$.
\\ Indeed, we have $\tau$ is a stopping time (a.s. finite). 
Let us fix $\omega \in \Omega$. 
 For each $n \in \N$, $\tau_n(\omega)= \theta_{k_n}(\omega) \leq \tau(\omega)$. 
Since $\theta_k (\omega) \uparrow +\infty$, there are a finite number (depending on $\omega$) of time points $\theta_k(\omega)$ satisfying $\theta_{k}(\omega) \leq \tau(\omega)$. 
Then, as $(\tau_n(\omega))_{n \in \mathbb{N}}$ is non-decreasing and taking values in a finite set, from a certain rank onward, the sequence is constant. That is, after a certain rank $n_0 =n_0(\omega)$, we have: $\tau_n(\omega)=\theta_{k_{n_0}}(\omega)$ for all $n \geq n_0$. 
Consequently, $\tau(\omega)=\limn \tau_n (\omega)=\theta_{k_{n_0}}(\omega)$.
\end{itemize}
\end{itemize}
  
\end{Remark}

\begin{proof}[Proof of Theorem \ref{Theorem_supermartingale_Snellenvelope}]
    By Lemma \ref{Lemma_admissible},  the value family $V$ is admissible.
    \\
    Let $S \in \Theta_{S}$ and $\tau \in \Theta_{S}$.
    To show the \emph{$(\Theta, \rho)$}-supermartingale property of the value family,
    it remains to show $\rho_{S,\tau}[V({\tau})]\leq V({S})$ a.s.
    By the maximizing sequence lemma (cf. Lemma \ref{Maximizing_sequence_lemma}), 
      there exists a sequence $(\tau_p) \in (\Theta_{\tau})^{\mathbb{N}}$,
      such that $V(\tau)= \lim_{p \rightarrow + \infty}\uparrow \rho_{\tau, \tau_p}[\xi(\tau_p)]$.
      Hence, we have: 
      \[
      \rho_{S, \tau}[V(\tau)]= \rho_{S, \tau} [\lim_{p \rightarrow + \infty}\uparrow \rho_{\tau, \tau_p}[\xi(\tau_p)]] 
      \leq  \liminf_{p \rightarrow + \infty} \rho_{S, \tau} [\rho_{\tau, \tau_p}[\xi(\tau_p)],
      \]
      where we have used the monotone Fatou property with respect to the terminal condition (vii) in the last inequality.
      By the consistency property, we have: 
        \[
       \liminf_{p \rightarrow + \infty} \rho_{S, \tau} [\rho_{\tau, \tau_p}[\xi(\tau_p)]
       =  \liminf_{p \rightarrow + \infty} \rho_{S,\tau_p}[\xi(\tau_p)]
       \leq V(S),
      \]
      where the last equality is due to $\Theta_{\tau} \subset \Theta_S$.
      We conclude that $\rho_{S,\tau}[V({\tau})]\leq V({S})$.
      Hence, the value family $V$ is a \emph{$(\Theta, \rho)$}-supermartingale family.
      This proves Statement 1 of the theorem.
      \\
      Let us now show Statement 2.
      By Remark \ref{TEXRemark33}, Statement 2, we have $V \geq \xi$.
      By Statement 1, we have that $V$ is a \emph{$(\Theta, \rho)$}-supermartingale.
       It remains to show that $V$ is the smallest. 
      Let $(V'(\tau))$ be another \emph{$(\Theta, \rho)$}-supermartingale family, such that, for each $\tau \in \Theta$,
      $V'(\tau) \geq \xi(\tau)$ a.s.
      Let $S \in \Theta$, $\tau \in \Theta_S$.
      By the monotonicity of the non-linear operators, we have:
      \[
      \rho_{S,\tau} [V'(\tau)] \geq  \rho_{S,\tau}[\xi(\tau)].
      \]
      On the other hand, as $(V'(\tau))$ is a \emph{$(\Theta, \rho)$}-supermartingale family, 
      we have $V'(S) \geq  \rho_{S,\tau} [V'(\tau)]$.
      Hence, 
      $V'(S) \geq  \rho_{S,\tau} [V'(\tau)] \geq  \rho_{S,\tau}[\xi(\tau)]$.
      By taking the essential supremum over $\tau \in \Theta_S$, we get: 
      \[
      V'(S) \geq  \esssup_{\tau \in \Theta_S}  \rho_{S,\tau}[\xi(\tau)]
      = V(S)
      \]
      The proof is complete.
\end{proof}

In the following lemma, we give a useful technical result that has been established in \cite{Grigorova-5} and hold true also in our infinite horizon framework.

\begin{lemma}\label{Lemma_phi_martingale_on_stochastic_interval}
    Let $\rho$ satisfy the consistency property (v).
    Let $\phi=(\phi(\nu))$ be a given non-negative admissible family.
    Let $S$ $\in$ $\Theta$ and $\tau$ $\in$ $\Theta$ be such that $S \leq \tau$ a.s. 
    We assume that for any  $\sigma$ $\in$ $\Theta$ such that $S \leq \sigma \leq \tau$ a.s., it holds 
    \begin{equation}\label{eq_5}
        \rho_{\sigma,\tau}[\phi(\tau)] = \phi(\sigma) \,\,\,\, a.s.
    \end{equation}
    Then, $\phi$ is a \emph{$(\Theta, \rho)$}-martingale on the stochastic interval $[S,\tau]$, that is, for any  $\nu_1$ $\in$ $\Theta$, and $\nu_2$ $\in$ $\Theta$, such that $S \leq \nu_1 \leq \nu_2 \leq \tau$ a.s.,
    \begin{equation}
          \rho_{\nu_1,\nu_2}[\phi(\nu_2)] = \phi(\nu_1) \,\,\,\, a.s.
    \end{equation}
\end{lemma}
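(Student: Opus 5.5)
Given $\nu_1,\nu_2\in\Theta$ with $S\le\nu_1\le\nu_2\le\tau$ a.s., the plan is to apply hypothesis \eqref{eq_5} twice and combine the two identities with the consistency property (v). The hypothesis only gives martingale equalities with right endpoint $\tau$, so the equality between $\nu_1$ and $\nu_2$ has to be recovered by passing through $\tau$.

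First, since $S\le\nu_2\le\tau$ and $\nu_2\in\Theta$, we take $\sigma=\nu_2$ in \eqref{eq_5} to obtain
\[
\phi(\nu_2)=\rho_{\nu_2,\tau}[\phi(\tau)] \quad\text{a.s.}
\]
Applying $\rho_{\nu_1,\nu_2}[\cdot]$ to both sides gives
\[
\rho_{\nu_1,\nu_2}[\phi(\nu_2)]=\rho_{\nu_1,\nu_2}\big[\rho_{\nu_2,\tau}[\phi(\tau)]\big].
\]
This step needs only that $\rho_{\nu_1,\nu_2}$ is a well-defined map on $L^0_+(\cf_{\nu_2})$, so that a.s.-equal arguments give a.s.-equal values. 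Both sides are admissible inputs: $\phi(\nu_2)$ is non-negative and $\cf_{\nu_2}$-measurable by admissibility of $\phi$, and $\rho_{\nu_2,\tau}[\phi(\tau)]\in L^0_+(\cf_{\nu_2})$ by property (i).

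Second, since $\nu_1\le\nu_2\le\tau$ are all in $\Theta$, the consistency property (v) gives
\[
\rho_{\nu_1,\nu_2}\big[\rho_{\nu_2,\tau}[\phi(\tau)]\big]=\rho_{\nu_1,\tau}[\phi(\tau)].
\]
Finally, taking $\sigma=\nu_1$ in \eqref{eq_5} (valid because $S\le\nu_1\le\tau$) yields $\rho_{\nu_1,\tau}[\phi(\tau)]=\phi(\nu_1)$. Chaining the three equalities gives $\rho_{\nu_1,\nu_2}[\phi(\nu_2)]=\phi(\nu_1)$ a.s., which is the claim.

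There is no real obstacle here. The only points to check are that every stopping time used lies in $\Theta$ with the ordering that (v) requires, and that the inputs to each $\rho$ lie in the correct spaces $L^0_+(\cf_\cdot)$. Both follow directly from the assumptions and from admissibility of $\phi$.
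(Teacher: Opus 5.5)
Your proof is correct and is essentially the paper's argument: apply the hypothesis \eqref{eq_5} at $\sigma=\nu_1$ and at $\sigma=\nu_2$, and connect the two through the common endpoint $\tau$ using the consistency property (v). The only difference is that you start the chain at $\nu_2$ and the paper starts at $\nu_1$, which does not matter.
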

\begin{proof}
    Let $\nu_1$ $\in$ $\Theta$, and $\nu_2$ $\in$ $\Theta$, such that $S \leq \nu_1 \leq \nu_2 \leq \tau$ a.s.
     By applying Eq. \eqref{eq_5} with $\sigma = \nu_1$ and by the consistency of the non-linear operators $\rho$, we have: 
    \[
     \phi(\nu_1) =  \rho_{\nu_1,\tau}[\phi(\tau)] 
     = \rho_{\nu_1,\nu_2}[ \rho_{\nu_2,\tau}[\phi(\tau)] ] \,\,\,\, a.s.
    \]
    Then, by applying again Eq. \eqref{eq_5} with $\sigma = \nu_2$, we have
    \[
     \rho_{\nu_2,\tau}[\phi(\tau)]  =  \phi(\nu_2).
    \]
    Hence, $   \phi(\nu_1)
     = \rho_{\nu_1,\nu_2}[ \rho_{\nu_2,\tau}[\phi(\tau)] ] 
     = \rho_{\nu_1,\nu_2}[\phi(\nu_2)] $ a.s.
\end{proof}

\begin{proof}[Proof of Proposition \ref{prop_optimality_criterion}]  Let $\nu^*$ $\in$ $\Tk$ be such that  
$V(\nu^*)=\xi(\nu^*)$ a.s., and 
the family $(V(\nu \wedge \nu^*))_{\nu \in \Tk}$ is a \emph{$(\Theta, \rho)$}-martingale family. \\ Hence, for any $\nu$ $\in$ $\Tk$, we have: \[    V(\theta_k \wedge \nu^*) = \rho_{\theta_k \wedge \nu^*,\nu \wedge \nu^*}[V(\nu \wedge \nu^*)] \,\,\,\, a.s.,  \]    which implies  (as $\theta_k \wedge \nu^* = \theta_k$, which follows from the fact that $\nu^* \geq \theta_k$)    \[V(\theta_k) = \rho_{\theta_k,\nu \wedge \nu^*}[V(\nu \wedge \nu^*)] \,\,\,\, a.s.\]    In particular, for $\nu=\nu^*$, we get  \[  V(\theta_k) = \rho_{\theta_k,\nu^* \wedge \nu^*}[V(\nu^* \wedge \nu^*)]    = \rho_{\theta_k,\nu^*}[V(\nu^*)]\,\,\,\, a.s.\]    From this, together with condition ii), we have  \[ V(\theta_k)  = \rho_{\theta_k,\nu^*}[V(\nu^*)]  = \rho_{\theta_k,\nu^*}[\xi(\nu^*)]\,\,\,\, a.s.,  \]which implies that the stopping time $\nu^*$ is an optimal stopping time for problem \eqref{optimal_stopping_problem_kbis}.   The proof is complete. \end{proof}


\begin{thebibliography}{99}

\bibitem[Bayraktar et al.(2010)]{Bayraktar-2}  E. Bayraktar,  I. Karatzas,  and S. Yao,  \textit{Optimal stopping for dynamic convex risk measures}, Illinois Journal of Mathematics  54 (2010), pp. 1025-1067. 

\bibitem[Bayraktar and Yao-Part I(2011)]{Bayraktar} E. Bayraktar and S. Yao, \textit{Optimal stopping for non-linear expectations Part I},  Stochastic Processes and Their Applications 121  (2011), pp. 185-211.

\bibitem[Bayraktar and Yao-Part II(2011)]{Bayraktar-3} E. Bayraktar and S. Yao, \textit{Optimal stopping for non-linear expectations Part II}, Stochastic Processes and Their Applications 121  (2011), pp. 212-264.


\bibitem[Belomestny and Kr{\"a}tschmer(2016)]{belomestny2016optimal} D. Belomestny and V. Kr{\"a}tschmer,
\textit{Optimal stopping under model uncertainty: Randomized stopping times approach}, Ann. Appl. Probab. volume 26(2), (2016), pp.1260-1295.

\bibitem[Belomestny and Kr{\"a}tschmer(2017)]{belomestny2017addendum} D. Belomestny and V. Kr{\"a}tschmer,
\textit{Addendum to "Optimal stopping under model uncertainty: Randomized stopping times approach"}, Ann. Appl. Probab. volume 27(2), (2017), pp.1289-1293.

%% \bibitem[Bion-Nadal(2009)]{Biagini} J. Bion-Nadal, \textit{Time consistent dynamic risk processes}, Stochastic Processes and Their Applications 119 (2009), pp. 633-654.

%%\bibitem[Cheridito et al.(2006)]{Cheridito} P. Cheridito, F. Delbaen, and M. Kupper, \textit{Dynamic monetary risk measures for bounded discrete-time processes}, Electron. J. Probab. 11 (2006), pp. 57-106 (electronic).


\bibitem[Chen and Wang (2000)]{chen2000infinite} Z. Chen and B. Wang, \textit{Infinite time interval BSDEs and the convergence of g-martingales}, Journal of the Australian Mathematical Society, volume 69(2), (2000), pp.187-211. 

%% \bibitem[Delbaen et al.(2010)]{Delbaen} F. Delbaen, S. Peng and E. Rosazza - Gianin, \textit{Representation of the penalty term of dynamic concave utilities}, Finance Stoch. 14 (2010), pp. 449-472. %MR 2670421

%% \bibitem[Dumitrescu et al.(2016)]{Dumitrescu-Quenezz-Sulem} R. Dumitrescu, M.C. Quenez and A. Sulem, \textit{Mixed generalized Dynkin game and stochastic control in a Markovian framework}, Stochastics (2016), pp. 400-429. %(https://doi.org/10.1080/17442508.2016.1230614).

%% \bibitem[El Karoui (1981)]{ElKaroui_notes} N. El Karoui, \textit{Les aspects probabilistes du contr$\hat{o}$le stochastique}, in: Ecole d'\' et\' e de Saint-Flour,  Lecture Notes in Math., vol. 876, (1981), Springer, Berlin, , pp. 73-238.   
%% \bibitem[El Karoui et al.(1997)]{ElKaroui}  N. El Karoui, S. Peng, and M. C. Quenez, \textit{Backward stochastic differential equations in finance}, Mathematical Finance 7  (1997), pp. 1-71.

\bibitem[El Karoui and Quenez (1997)]{EQ}
N. El Karoui and M.-C. Quenez, %(1996):
\textit{Non-linear pricing theory and backward stochastic differential 
equations},  In Financial Mathematics, 
Lectures Notes in Mathematics 1656, %Bressanone, 1996, 
Ed. W. Runggaldier,  Springer, 1997.

\bibitem[Ekren et al.(2014)]{Ekren} I. Ekren, N. Touzi and J. Zhang,
\textit{Optimal stopping under nonlinear expectation},
Stochastic Processes and their Applications, volume 124(10),  (2014), pp. 3277-3311.

\bibitem[Grigorova et al.(2017)] {Grigorova-1} M. Grigorova,  P. Imkeller , E. Offen, Y. Ouknine, and M.-C. Quenez, \textit{Reflected BSDEs when the obstacle is not right-continuous and optimal stopping}, Annals of Applied Probability, volume 25(5), (2017), pp. 3153-3188. 

\bibitem[Grigorova et al.(2020)] {Grigorova-3} M. Grigorova, P. Imkeller, Y. Ouknine and M.-C. Quenez, \textit{Optimal stopping with f-expectations: The irregular case}, Stochastic Processes and their Applications, volume 130 (3), (2020), pp. 1258-1288.

\bibitem[Grigorova et al.(2020)2] {Grigorova-4} M. Grigorova, P. Imkeller, Y. Ouknine and M.-C. Quenez, \textit{On the strict value of the non-linear optimal stopping problem}, Electronic Communications in Probability, volume 25, (2020), paper 49, 9 pages.

\bibitem[Grigorova and Quenez(2016)]{Grigorova-2} M. Grigorova and M.-C. Quenez, \textit{Optimal stopping and a non-zero-sum Dynkin game in discrete time with risk measures induced by BSDEs},  Stochastics (2016), pp. 259-279. %Informa, UK, Limited. $(http://dx.doi.org/10.1080/17442508.2016.1166505)$. 
\bibitem[Grigorova et al.(2021)]{Grigorova_Am} M. Grigorova, M.-C. Quenez and A. Sulem,  \textit{American options in a non-linear incomplete market model with default}, Stochastic Processes and their Applications, volume 142, (2021), pp. 479-512. 

\bibitem[Grigorova et al.(2024)] {Grigorova-5} M. Grigorova, M.-C. Quenez and P. Yuan, \textit{Optimal stopping: Bermudan strategies meet non-linear evaluations}, Electronic Journal of  Probability, volume 29, (2024), pp. 1 - 29.

%% \bibitem[Grigorova et al.(2025)b] {Grigorova-6} M. Grigorova, M.-C. Quenez and P. Yuan, \textit{The non-linear multiple stopping problem: between the discrete and the continuous time}, arXiv preprint arXiv:2504.13503 (2025).

\bibitem[Grigorova et al.(2025)a] {Grigorova-7} M. Grigorova, M.-C. Quenez and P. Yuan, \textit{Non-linear non-zero-sum Dynkin games with Bermudan strategies}, Journal of Optimization Theory and Applications, volume 206(1), (2025), pp. 1 - 20.

\bibitem[Kim et al.(2021)]{Rutkowski} E. Kim, T. Nie and M. Rutkowski, \textit{American options in nonlinear markets}, Electronic Journal of  Probability, volume 26, (2021), pp. 1-41.
\bibitem[Klimsiak and Rzymowski(2021)]{Klimsiak} T. Klimsiak and M. Rzymowski, \textit{Reflected BSDEs with two optional barriers and
monotone coefficient on general filtered space}
Electronic  Journal of Probability 26, article no. 91, (2021), pp. 1-24.
\bibitem[Kr\"atschmer and Schoenmakers(2010)]{Schoe}  V. Kr\"atschmer and J. Schoenmakers, \textit{Representations for optimal stopping under dynamic monetary utility functionals}, SIAM Journal on Financial Mathematics 1 (2010), pp. 811-832.

\bibitem[Menaldi and Robin(2016)]{menaldi2016Robin} J.L. Menaldi and M. Robin, \textit{On some optimal stopping problems with constraint}, SIAM Journal on Control and Optimization, volume 54(5), (2016), pp. 2650-2671. %doi: 10.1137/15M1040001

\bibitem[Neveu(1975)]{Neveu}  J. Neveu,  \textit{Discrete-parameter martingales}, North-Holland, Amsterdam, 1975. 


\bibitem[Nutz and Zhang (2015)]{Nutz} M. Nutz and J.  Zhang, \textit{Optimal Stopping under adverse non-linear expectations and related games},  The Annals of Applied Probability, volume 25(5),  (2015), pp. 2503-2534. %http://www.jstor.org/stable/24521609


%%\bibitem[Pardoux and Peng(1990)]{Pape90}  E. Pardoux and  S. Peng,  \textit{Adapted solution of backward stochastic differential equation},Systems \& Control Letters 14 (1990), pp. 55-61.

%%\bibitem[Peng(1997)]{Peng-2} S. Peng, \textit{Backward SDE and related g-expectation}, in \textit{in Backward Stochastic Differential Equations}, Pitman Research Notes in Math. Series, No. 364, El Karoui Mazliak edit, (1997) pp. 141-159.

%%\bibitem[Peng(2004)]{Pe04}  S.  Peng, \textit{Nonlinear expectations, nonlinear evaluations and risk measures}, Lecture Notes in Math., 1856, Springer, Berlin,  (2004), pp. 165-253.
 
%%\bibitem[Peng(2004)]{Peng-1} S. Peng, \textit{Backward stochastic differential equations, nonlinear expectations, nonlinear evaluations, and risk measures}, Lecture notes in Chinese Summer School in Mathematics, Weihei, (2004).  

%%\bibitem[Quenez and Kobylanski(2012)]{Quenez-Kobylanski} M. C. Quenez and M. Kobylanski, \textit{Optimal stopping time problem in a general framework}, Electron. J. Probab. 17, no. 72 (2012), pp. 1-28.

%%\bibitem[Quenez and Sulem(2013)]{Quenez-Sulem-0}  M. C. Quenez and A. Sulem, \textit{BSDEs with jumps, optimization and applications to dynamic risk measures}, Stochastic Processes and their Applications 123  (2013), pp. 3328-3357.

\bibitem[Quenez and Sulem(2014)]{Quenez-Sulem}  M. C. Quenez and A. Sulem, \textit{Reflected BSDEs and robust optimal stopping for dynamic risk measures with jumps}, Stochastic Processes and their Applications 124 (2014), pp. 3031-3054.



\end{thebibliography}
\end{document}